\documentclass{imsart}

\RequirePackage{amsthm,amsmath,amsfonts,amssymb}
\RequirePackage[numbers]{natbib}
\RequirePackage[colorlinks,citecolor=blue,urlcolor=blue]{hyperref}
\RequirePackage{graphicx}

\startlocaldefs



\usepackage{natbib}
\usepackage{amsmath,hyperref,adjustbox}
\usepackage{amsfonts}
\usepackage{tabularx,amsthm}
\usepackage{amsmath}
\usepackage{amscd}
\usepackage[latin2]{inputenc}
\usepackage{t1enc}
\usepackage[mathscr]{eucal}
\usepackage{indentfirst}
\usepackage{graphicx}
\usepackage{graphics}
\usepackage{pict2e}
\numberwithin{equation}{section}
\usepackage{epstopdf}
\usepackage[T1]{fontenc}
\usepackage{amssymb}
\usepackage[normalem]{ulem}
\usepackage{comment}
\usepackage{xcolor}
\usepackage{hyperref}
\usepackage{float}
\usepackage{multicol}
\usepackage{algorithmic}
\usepackage[plain,noend]{algorithm2e}
\usepackage{graphicx,subfigure}
\usepackage{xr}
\usepackage{color}
\usepackage{rotating}
\usepackage{comment}

\newcolumntype{+}{>{\global\let\currentrowstyle\relax}}
\newcolumntype{^}{>{\currentrowstyle}}

\useunder{\uline}{\ul}{}

\newtheorem{theorem}{Theorem}

\newtheorem{lemma}{Lemma}

\newtheorem{rmk}{Remark}

\def \mR {\mathbb{R}}

\def \< {\langle}
\def \> {\rangle}
\def \^ {\widehat}

\newcommand{\bbA}{{\bf A}}

\newcommand{\bbB}{{\bf B}}
\newcommand{\bbC}{{\bf C}}

\newcommand{\bbD}{{\bf D}}

\newcommand{\bbe}{{\bf e}}
\newcommand{\bbE}{{\bf E}}

\newcommand{\bbH}{{\bf H}}

\newcommand{\bbI}{{\bf I}}

\newcommand{\bbM}{{\bf M}}

\newcommand{\bbr}{{\bf r}}

\newcommand{\bX}{{\bf X}}

\newcommand{\bS}{{\bf S}}
\newcommand{\bmu}{{\boldsymbol\mu}}
\newcommand{\bGamma}{{\boldsymbol\Gamma}}

\newcommand{\bbS}{{\bf S}}

\newcommand{\bbt}{{\bf t}}
\newcommand{\bbT}{{\bf T}}

\newcommand{\bbU}{{\bf U}}
\newcommand{\bbu}{{\bf u}}
\newcommand{\bbV}{{\bf V}}

\newcommand{\bbX}{{\bf X}}

\newcommand{\bbx}{{\bf x}}

\newcommand{\bbY}{{\bf Y}}
\newcommand{\bby}{{\bf y}}
\newcommand{\bbZ}{{\bf Z}}
\newcommand{\bbz}{{\bf z}}
\newcommand{\bbb}{{\bf b}}

\newcommand{\blue}{\textcolor{blue}}

\newcommand{\beq}{\begin{equation}}
\newcommand{\eeq}{\end{equation}}
\newcommand{\bqa}{\begin{eqnarray}}
\newcommand{\eqa}{\end{eqnarray}}
\newcommand{\bqn}{\begin{eqnarray*}}
\newcommand{\eqn}{\end{eqnarray*}}

\newcommand{\bdes}{\begin{description}}
\newcommand{\edes}{\end{description}}

\renewcommand{\theequation}{\arabic{section}.\arabic{equation}}
\endlocaldefs



\begin{document}




\begin{frontmatter}
\title{CLT for random quadratic forms based on sample means and sample covariance matrices}

\runtitle{~CLT for random quadratic forms}
\begin{aug}
\author[A]{\fnms{Wenzhi}  \snm{Yang}\ead[label=e1, mark]{wzyang@ahu.edu.cn}},
\author[B]{\fnms{Yiming} \snm{Liu}  \ead[label=e2,mark]{liuy0135@e.ntu.edu.sg}},
\author[C]{\fnms{Guangming}   \snm{Pan}\ead[label=e3, mark]{GMPAN@ntu.edu.sg}}
\and
\author[D]{\fnms{Wang} \snm{Zhou} \ead[label=e4, mark]{wangzhou@nus.edu.sg}}
\address[A]{School of Big Data and Statistics, Anhui University,
\printead{e1}}
\address[B]{School of Economics, Jinan University,
\printead{e2}}
\address[C]{School of Physical and Mathematical Sciences, Nanyang Technological University,
\printead{e3}}
\address[D]{Department of Statistics and Data Science, National University of Singapore,
\printead{e4}}
\end{aug}

\begin{abstract}
 In this paper, we use dimensional reduction technique to study the central limit theory (CLT) random quadratic forms based on sample means and sample covariance matrices. Specifically, we use a matrix denoted by $\bbU_{p\times q}$, to map $q$-dimensional sample vectors to a $p$ dimensional subspace, where $q\geq p$ or $q\gg p$. Under the condition of $p/n\rightarrow 0$ as $(p,n)\rightarrow \infty$, we obtain the CLT of random quadratic forms for the sample means and sample covariance matrices.
\end{abstract}

\begin{keyword}[class=MSC2020]
\kwd[Primary ]{	62E20}
\end{keyword}

\begin{keyword}
\kwd{ Random quadratic forms }
\kwd{Sample means}
\kwd{ Sample covariance matrices}
\kwd{ Central limit theory}
\end{keyword}
\end{frontmatter}

\section{Introduction}
Consider the multivariate model
\begin{equation}
\bby_j=\boldsymbol{\mu}+\boldsymbol{\Gamma}\bbx_j,1\leq j\leq n,\label{t0}
\end{equation}
where $\boldsymbol{\mu}$ is a mean vector in $\mathbb{R}^q$, $\boldsymbol{\Gamma}$ is a $q$ by $m$ matrix, $q\leq m$,  $\boldsymbol{\Sigma}_q=\boldsymbol{\Gamma}\boldsymbol{\Gamma}^\top$ is a positive definite covariance matrix (denoted by $\boldsymbol{\Sigma}_q\succ0$) and $\bbx_1,\ldots,\bbx_n$ are independent and identically distributed ($i.i.d.$) $m$-dimensional real random vectors with the mean vector $E\bbx_1=\boldsymbol{0}_m$ and $\text{Cov}(\bbx_1)=\bbI_m$. So $\bby_1,\ldots,\bby_n$ are $q$-dimensional real random vectors
and sample mean statistic $\bar{\bby}=\frac{1}{n}\sum\nolimits_{i=1}^n\bby_i$ and sample covariance matrix $\mathcal{S}_n=\frac{1}{n}\sum\nolimits_{i=1}^n(\bby_i-\bar{\bby})(\bby_i-\bar{\bby})^\top$ are very important in the mean vector test and covariance matrix test (see  Anderson \cite{Anderson}). Here, $\top$ represents the transpose of a vector. However, as the dimension increases, there are problems in the mean vector test and covariance matrix test. For example, when $q>n-1$, the inverse of $\mathcal{S}_n$ does not exist so the Hotelling's $T^2$ test obtained by Hotelling\cite{Hotelling}
\begin{equation}
T^2=n(\bar{\bby}-\boldsymbol{\mu})^\top\mathcal{S}_n^{-1}(\bar{\bby}-\boldsymbol{\mu}),\label{t1}
\end{equation}
fails to test the high dimensional mean. There are many papers to study the high dimensional means and covariance matrix. For example, Bai et al.\cite{Bai 2007}, Bai and Saranadasa\cite{Bai 1996}, Bai and Silverstein\cite{Bai 2004, Bai 2010}, Chen et al. \cite{Chen 2011}, Chen and Qin\cite{Chen 2010}, Pan and Zhou\cite{Pan 2011}, Srivastava \cite{Srivastava 2009}, Srivastava and Du\cite{Srivastava 2008}, Srivastava and Li\cite{Srivastava 2016}, etc.

Inspired by dimension reduction techniques such as principle component analysis, we aim to project the observations into a low dimensional subspace through a reduction matrix $\bbU_{p\times q}$ ($p\ll q$). The projected observations can be written in a vector form as
\begin{equation}
\bbz_j=\bbU\bby_j=\bbU\boldsymbol{\mu}+\bbU\boldsymbol{\Gamma}\bbx_j,1\leq j\leq n,\label{ae1}
\end{equation}
where $\bbU=\bbU_{p\times q}$ and $\boldsymbol{\Gamma}=\boldsymbol{\Gamma}_{q\times m}$ are nonrandom matrices and $\bbx_1,\ldots,\bbx_n$ are $i.i.d.$ $m$-dimensional real random vectors with the mean vector $\boldsymbol{0}_m$ and covariance matrix $\bbI_m$.
In view of \eqref{ae1}, the centered sample covariance matrix is
defined by
\begin{equation}
\mathbb{S}_{n}=\frac{1}{n}\sum\limits_{j=1}^n(\bbz_j-\bar\bbz)(\bbz_j-\bar\bbz)^\top
=\frac{1}{n}\sum\limits_{j=1}^n\bbU\boldsymbol{\Gamma}(\bbx_j-\bar\bbx)(\bbx_j-\bar\bbx)^\top \boldsymbol{\Gamma}^\top\bbU^\top\label{d1}
\end{equation}
where $\bar\bbz=n^{-1}\sum\nolimits_{j=1}^n\bbz_j=\bbU\boldsymbol{\mu}+\bbU\boldsymbol{\Gamma}\bar{\bbx}$ and $\bar\bbx=n^{-1}\sum\nolimits_{j=1}^n\bbx_j$.

This is the first paper in a series of two papers. In this paper, we will study the central limit theory (CLT) of random quadratic forms involving sample means and sample covariance matrices when $p/n\rightarrow 0$ as $(p,n)\rightarrow \infty$ but $q$ can be arbitrarily large. For the details, please see Theorem \ref{th2}. In our second paper, we use Theorem \ref{th2} to derive the CLT Hotelling's $T^2$ test when $p/n\rightarrow 0$. To investigate this limit theory, we recall some basic definitions from random matrix theory.

Let $\bbA=\bbA_{p\times p}$ be any $p\times p$ square matrix with real eigenvalues denoted by $\lambda_1\ge\cdots\ge  \lambda_p$. The empirical spectral distribution (ESD) of $\bbA$ is defined by
$$F^{\bbA}(x)=\frac{1}{p}\sum_{j=1}^pI(\lambda_j\leq x),~x\in \mathbb{R},$$
where $I(\cdot)$ is the indicator function. The Stieltjes transform of $F^{\bbA}$ is given by
$$m_{F^{\bbA}}(z)=\int \frac{1}{x-z}dF^{\bbA}(x),$$
where $z=u+iv\in \mathbb{C}^{+}\equiv\{z\in \mathbb{C},\mathfrak{I}(z)>0\}$. Let $\bbX_n=(\bbx_1,\ldots,\bbx_n)$ and $\boldsymbol{\Sigma}_m=\boldsymbol{\Gamma\Gamma^\top}$.
The famous Marc\v{e}nko Pastur (M-P) law states that the ESD of $\bbS_n=\frac{1}{n}\boldsymbol{\Gamma}\bbX_n\bbX_n^\top\boldsymbol{\Gamma}^\top$, i.e., $F^{\bbS_n}(x)$, weakly converges to a nonrandom probability distribution function $F^{c,H}(x)$ whose Stieltjes transform is determined by the following equation $$m(z)=\int\frac{1}{\lambda(1-c-czm(z))-z}dH(\lambda)$$ for each $z\in \mathbb{C}^+$ as $m/n\rightarrow c\in (0,\infty)$ and $n\rightarrow\infty$, where $H$ is the limiting spectral distribution of $\boldsymbol{\Gamma}\boldsymbol{\Gamma}^\top$.
One can refer to Bai and Silverstein \cite{Bai 2010} for more details.

The rest of this paper is organized as follows. Section 2 presents the CLT for random quadratic forms with dimensionality reduction, and its proof is presented in Section 3. Some auxiliary proofs are presented in Appendix A.1-A.2.

\section{CLT for random quadratic forms}

\textbf{Assumption}:

{\it $(A.1)$  Let $\bbX_n=(\bbx_1,\ldots,\bbx_n)=(X_{ij})$ be an $m\times n$ matrix whose entries are $i.i.d.$ real random variables with $EX_{11}=0$, $\text{Var}(X_{11})=1$ and $EX_{11}^4<\infty$.

$(A.2)$ For $p\leq q\leq m$, let $\bbU=\bbU_{p\times q}$, $\boldsymbol{\Gamma}=\boldsymbol{\Gamma}_{q\times m}$ and $\boldsymbol{\Sigma}_p$ be nonrandom matrices satisfying
$$\bbU\boldsymbol{\Gamma}\boldsymbol{\Gamma}^\top\bbU^\top=\boldsymbol{\Sigma}_p\succ 0.$$

$(A.3)$  Let $c_n=p/n$ and $c_n=O(n^{-\eta})$ for some $\eta\in (0,1)$.
}

In the following, we will study the limiting distributions of random quadratic forms involving sample means and sample covariance matrices.
Since $\boldsymbol{\Sigma}_p\succ 0$, $\boldsymbol{\Sigma}_p^{-1/2}$ exists. So we take the transform
\begin{equation}
\tilde{\boldsymbol\mu}=\boldsymbol{\Sigma}_p^{-\frac{1}{2}}\bbU\boldsymbol{\mu},~~~\bbB=\boldsymbol{\Sigma}_p^{-\frac{1}{2}}\bbU\boldsymbol{\Gamma},\label{nt1}
\end{equation}
\begin{equation}
\tilde{\bbz}_{j}=\boldsymbol{\Sigma}_p^{-\frac{1}{2}}\bbz_{j}=\boldsymbol{\Sigma}_p^{-\frac{1}{2}}\bbU\bby_{j}
=\tilde{\boldsymbol\mu}+\bbB\bbx_{j},~~1\leq j\leq n,\label{nt2}
\end{equation}
\begin{eqnarray}
\tilde{\mathbb{S}}_{n}=\frac{1}{n}\sum_{j=1}^n(\tilde{\bbz}_j-\bar{\tilde{\bbz}})(\tilde{\bbz}_j-\bar{\tilde{\bbz}})^\top
=\frac{1}{n}\sum\limits_{j=1}^n\bbB
(\bbx_j-\bar{\bbx})(\bbx_j-\bar{\bbx})^\top\bbB^\top,\label{nt3}
\end{eqnarray}
where $\bar\bbx=\frac{1}{n}\sum\nolimits_{j=1}^n\bbx_j$ and
\begin{equation}
\bar{\tilde{\bbz}}=\frac{1}{n}\sum_{j=1}^n\tilde{\bbz}_j=\boldsymbol{\Sigma}_p^{-\frac{1}{2}}\bbU\boldsymbol{\mu}
+\frac{1}{n}\sum_{j=1}^n\boldsymbol{\Sigma}_p^{-\frac{1}{2}}\bbU\boldsymbol{\Gamma}\bbx_j
=\tilde{\boldsymbol{\mu}}+\bbB\bar{\bbx}.\label{nt4}
\end{equation}
Let $\lambda_1,\lambda_2,\ldots,\lambda_p$ denote eigenvalues of $\tilde{\mathbb{S}}_{n}$ defined by \eqref{nt3}. For any analytic function $f(\cdot)$, define
$$f(\tilde{\mathbb{S}}_{n})=\bbV^\top\text{diag}(f(\lambda_1),\ldots,f(\lambda_p))\bbV,$$
where $\bbV^\top\text{diag}(\lambda_1,\ldots,\lambda_p)\bbV$ denotes the spectral decomposition of $\tilde{\mathbb{S}}_{n}$.

\begin{theorem}
\label{th2} Let assumptions $(A.1)$-$(A.3)$ be satisfied. Assume that $g(x)$ is a function with a continuous first derivative in a neighborhood of 0 such that $g^\prime(0)\neq 0$, $f(x)$ is analytic on an open region containing the interval
\begin{equation}
[1-\delta,1+\delta],~~\exists~\delta\in (0,1) \label{w1}
\end{equation}
and satisfies $f(1)\neq 0$. Denote $\tilde{\boldsymbol{\mu}}=\boldsymbol{\Sigma}_p^{-1/2}\bbU\boldsymbol{\mu}$ and
$$X_n=\frac{n}{\sqrt p}c_n\Big[\frac{(\bar{\tilde\bbz}-\tilde{\boldsymbol{\mu}})^\top f(\tilde{\mathbb{S}}_{n})(\bar{\tilde\bbz}-\tilde{\boldsymbol{\mu}})}{\|\bar{\tilde\bbz}-\tilde{\boldsymbol{\mu}}\|^2}-f(1)\Big],Y_n=\frac{n}{\sqrt p}\Big[g((\bar{\tilde\bbz}-\tilde{\boldsymbol{\mu}})^\top(\bar{\tilde\bbz}-\tilde{\boldsymbol{\mu}}))-g(c_n)\Big],$$
where $\tilde{\mathbb{S}}_{n}$ and $\bar{\tilde\bbz}$ are defined by \eqref{nt3} and \eqref{nt4}, respectively.
As $\min(p,n)\rightarrow\infty$,
\begin{equation}
(X_n,Y_n)\xrightarrow{d}(X,Y),\label{w2}
\end{equation}
where $(X,Y)\sim N(\boldsymbol{0},\boldsymbol{\Gamma}_1)$ with $$
\boldsymbol{\Gamma}_1=
\begin{bmatrix}
2f^2(1)&~2g^\prime(0)f(1)\\
2g^\prime(0)f(1)&~2(g^\prime(0))^2
\end{bmatrix}.$$
\end{theorem}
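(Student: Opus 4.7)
The first move is to exploit $\bbB\bbB^\top = \boldsymbol{\Sigma}_p^{-1/2}\bbU\boldsymbol{\Gamma}\boldsymbol{\Gamma}^\top\bbU^\top\boldsymbol{\Sigma}_p^{-1/2}=\bbI_p$: setting $\bbw_j:=\bbB\bbx_j$ yields i.i.d.\ $p$-vectors with mean $\bbO$ and covariance $\bbI_p$, for which $\bar{\tilde\bbz}-\tilde{\boldsymbol\mu}=\bar\bbw$ and $\tilde{\mathbb{S}}_n=n^{-1}\sum_j(\bbw_j-\bar\bbw)(\bbw_j-\bar\bbw)^\top$. The theorem thus becomes a joint CLT for the two scalars $N_n:=\bar\bbw^\top f(\tilde{\mathbb{S}}_n)\bar\bbw$ and $D_n:=\|\bar\bbw\|^2$, centred around $c_n f(1)$ and $c_n$ respectively, together with a Taylor reduction to linearise the ratio inside $X_n$ and the function $g$ inside $Y_n$.

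\textbf{Spectral stabilisation and Taylor expansion.} Under (A.1)--(A.3) and $p/n\to 0$, a Bai--Yin / Geman-type argument yields $\|\tilde{\mathbb{S}}_n-\bbI_p\|_{\mathrm{op}}\pto 0$, so with probability tending to $1$ the eigenvalues of $\tilde{\mathbb{S}}_n$ lie in $[1-\delta,1+\delta]$ and the analytic functional calculus furnishes $f(\tilde{\mathbb{S}}_n)=\sum_{k\ge 0}\frac{f^{(k)}(1)}{k!}(\tilde{\mathbb{S}}_n-\bbI_p)^k$. Hence
\[
N_n - f(1)D_n = f'(1)\,\bar\bbw^\top(\tilde{\mathbb{S}}_n-\bbI_p)\bar\bbw + O_p\!\bigl(\|\tilde{\mathbb{S}}_n-\bbI_p\|^2_{\mathrm{op}}\|\bar\bbw\|^2\bigr),
\]
and in parallel $Y_n = \frac{n\,g'(0)}{\sqrt p}(D_n-c_n)+o_p(1)$ after a first-order Taylor expansion of $g$ at $0$.

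\textbf{Showing the spectral-fluctuation term is negligible.} A moment calculation---using Wick identities in the Gaussian case and a truncation plus fourth-moment bookkeeping in general---shows $\mathrm{sd}\bigl(\bar\bbw^\top(\tilde{\mathbb{S}}_n-\bbI_p)\bar\bbw\bigr)=O(p/n^{3/2})$, so that $\frac{n}{\sqrt p}\bar\bbw^\top(\tilde{\mathbb{S}}_n-\bbI_p)\bar\bbw = O_p(\sqrt{c_n})=o_p(1)$ under (A.3). Feeding this back into the Taylor expansion yields
\[
X_n = f(1)\cdot\frac{n}{\sqrt p}(D_n-c_n) + o_p(1) = \frac{f(1)}{g'(0)}\,Y_n + o_p(1),
\]
so $(X_n,Y_n)$ is asymptotically concentrated on the line $X=(f(1)/g'(0))Y$; this degeneracy is precisely what the singular covariance matrix $\boldsymbol{\Gamma}_1$ encodes (note $\det\boldsymbol{\Gamma}_1=0$).

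\textbf{Scalar CLT and main obstacle.} It remains to prove $\frac{n}{\sqrt p}(D_n-c_n)\xrightarrow{d} N(0,2)$. Writing $n^2 D_n-np=\sum_i(\|\bbw_i\|^2-p)+2\sum_{i<j}\bbw_i^\top\bbw_j$, the off-diagonal quadratic $U$-statistic carries the full asymptotic variance $2p$, and the martingale CLT applied along $\mathcal{F}_k:=\sigma(\bbw_1,\dots,\bbw_k)$ delivers the required Gaussian limit; the joint convergence of $(X_n,Y_n)$ then follows immediately from Slutsky combined with the representation above. The main obstacle is the moment control in the previous step \emph{without} assuming Gaussianity of the $\bbx_j$'s (so that $\bar\bbw$ and $\tilde{\mathbb{S}}_n$ are only uncorrelated, not independent): this requires expanding $\tilde{\mathbb{S}}_n=\bbB\bbS_n^c\bbB^\top$ and carrying out detailed fourth-moment calculations on sums of products of the i.i.d.\ scalar entries $X_{ij}$, with the rate $c_n=O(n^{-\eta})$ used to absorb the subleading contributions produced by the non-Gaussian cumulants.
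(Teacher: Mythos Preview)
Your route---Taylor-expanding $f(\tilde{\mathbb S}_n)$ about $\bbI_p$ and collapsing everything to a scalar CLT for $D_n$---is genuinely different from the paper's Stieltjes-transform machinery (Lemma~1, the contour process $\hat X_n(z)$, tightness, Cauchy integral), and would be considerably more elementary if it went through.

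There is, however, a real gap at the step ``feeding this back into the Taylor expansion''. Because $N_n-f(1)D_n=\bar\bbw^\top\bigl(f(\tilde{\mathbb S}_n)-f(1)\bbI_p\bigr)\bar\bbw$, the statistic $X_n$ is \emph{exactly}
\[
X_n=\frac{n}{\sqrt p}\,c_n\Bigl[\frac{N_n}{D_n}-f(1)\Bigr]
=\frac{n}{\sqrt p}\cdot\frac{c_n}{D_n}\,\bigl(N_n-f(1)D_n\bigr),
\]
and $c_n/D_n\pto 1$. Hence your own bound $\frac{n}{\sqrt p}(N_n-f(1)D_n)=o_p(1)$ already forces $X_n=o_p(1)$, which \emph{contradicts} the displayed conclusion $X_n=f(1)\,\frac{n}{\sqrt p}(D_n-c_n)+o_p(1)$ (the right side has asymptotic variance $2f(1)^2\neq 0$). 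The term $f(1)\,\frac{n}{\sqrt p}(D_n-c_n)$ does not appear in the linearisation of $X_n$ at all: in the ratio $N_n/D_n$ the ``$D_n$-fluctuation'' cancels, and the only source of variability in $N_n/D_n-f(1)$ is precisely the spectral deviation $f(\tilde{\mathbb S}_n)-f(1)\bbI_p$ that you have declared negligible. (A sharp sanity check: for $f\equiv 1$, which satisfies $f(1)\neq 0$, one has $N_n=D_n$ and $X_n\equiv 0$ for every $n$.) So either the variance bound $\mathrm{sd}\bigl(\bar\bbw^\top(\tilde{\mathbb S}_n-\bbI_p)\bar\bbw\bigr)=O(p/n^{3/2})$ is too optimistic, or the algebraic jump to $X_n=f(1)\,\frac{n}{\sqrt p}(D_n-c_n)$ is wrong; the two claims cannot coexist with a nondegenerate marginal for $X$. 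The paper sidesteps this by never separating $N_n$ from $D_n$: it works with the full resolvent quadratic form $\bar\bbx^\top\bbB^\top(\tilde{\mathbb S}_n-z\bbI_p)^{-1}\bbB\bar\bbx$ and extracts the limiting covariance directly from its martingale increments.
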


\begin{rmk}
The conditions in (A.1) are usually used to study the random matrix (see \cite{Bai 2004,Bai 2010}). Condition (A.2) requests the reduced dimensional covariance matrix to be a positive definite matrix. When $q/n\to c\in (0,1)$, Pan and Zhou\cite{Pan 2011} obtained the random quadratic forms based on sample means and sample covariance matrices and gave its application to the Hotelling's $T^2$ test. In this paper, we use $\bbU_{p\times q}$ matrix to map $q$-dimensional sample vectors to a $p$ dimensional subspace, where $q\geq p$ or $q\gg p$. Under the condition of $p/n\rightarrow 0$ as $(p,n)\rightarrow \infty$, we obtain the CLT of random quadratic forms for the sample means and sample covariance matrices. In our second paper, we will use Theorem \ref{th2} to derive the CLT Hotelling's $T^2$ test when $p/n\rightarrow 0$.
\end{rmk}

\section{\bf Outline of the proofs}
The proof of Theorem \ref{th2} relies on Lemma \ref{lem1} below that deals with the asymptotic joint distribution of
$$
X_n(z)=\frac{n}{\sqrt{p}}c_n\Big[\frac{\bar\bbx^\top\bbB^\top(\tilde{\mathbb{S}}_{n}-z\bbI_p)^{-1}\bbB\bar{\bbx}}{\|\boldsymbol{\bbB}\bar{\bbx}\|^2}-m(z)\Big],$$
$$Y_n=\frac{n}{\sqrt{p}}\Big[g(\bar{\bbx}^\top\bbB^\top\bbB\bar{\bbx})-g(c_n)\Big],$$
where $\bbB=\boldsymbol{\Sigma}^{-\frac{1}{2}}\bbU\boldsymbol{\Gamma}$, $c_n=p/n$, $m(z)=\int \frac{1}{x-z}dH(x)$ and $H(x)=I(1\leq x)$ for $x\in \mathbb{R}$.
The stochastic process $X_n(z)$ is defined on a contour $\mathcal{C}$
as follows: Let $v_0>0$ be arbitrary and set $\mathcal{C}_u=\{u+iv_0,~u\in[u_l,u_r]\}$, where
$u_l=1-\delta$ and $u_r=1+\delta$ for some $\delta\in (0,1)$. Then, we define
$$\mathcal{C}^+=\{u_l+iv:v\in [0,v_0]\}\cup \mathcal{C}_u\cup \{u_r+iv:v\in [0,v_0]\}$$
and denote $\mathcal{C}^{-}$ to be the symmetric part of $\mathcal{C}^{+}$ about the real axis. Then, let $\mathcal{C}=\mathcal{C}^+\cup \mathcal{C}^{-}$.
Let
\begin{eqnarray}
\tilde{\bbS}_{n}=\frac{1}{n}\sum\limits_{j=1}^n\bbB\bbx_j\bbx_j^\top\bbB^\top.\label{nt5}
\end{eqnarray}
By random matrix theory, $\tilde{\mathbb{S}}_{n}$ defined by \eqref{nt3} can be replaced by
$\tilde{\bbS}_{n}$ defined by \eqref{nt5}. It is difficult to control the spectral norm of $(\tilde{\mathbb{S}}_{n}-z\bbI_p)^{-1}$ or $(\tilde{\bbS}_{n}-z\bbI_p)^{-1}$ on the whole contour $\mathcal{C}$ (for example $v=0$), we thus define a truncated version $\hat{X}_n(z)$ of $X_n(z)$ (see in Bai and Silverstein \cite{Bai 2004}). For some $\vartheta\in (0,1)$, we choose a positive number sequence $\{\rho_n\}$ satisfying
\begin{equation}
\rho_n\downarrow 0~~\text{and}~~\rho_n\geq n^{-\vartheta}.\label{w3}
\end{equation}
Let
$\mathcal{C}_l=\{u_l+iv:v\in[n^{-1}\rho_n,v_0]\}$ and $\mathcal{C}_r=\{u_r+iv: v\in [n^{-1}\rho_n,v_0]\}$. Write $\mathcal{C}_n^{+}=\mathcal{C}_l\cup\mathcal{C}_u\cup \mathcal{C}_r$. Consequently, for $z=u+iv \in \mathcal{C}$, a truncated process $\hat{X}_n(z)$ is defined as follows
\begin{equation}
\label{w4} \hat{X}_n(z)=\left\{
\begin{aligned}
&~X_n(z),~~~~~~~~~~~~~~~~~~~~~~~~~~~~~~~~\text{if}~z\in \mathcal{C}_n^{+}\cup\mathcal{C}_n^{-};\\
&~\frac{nv+\rho_n}{2\rho_n}X_n(z_{r1})+\frac{\rho_n-nv}{2\rho_n}X_n(z_{r2}),~\text{if}~u=u_r,v\in[-\frac{\rho_n}{n},\frac{\rho_n}{n}];\\
&~\frac{nv+\rho_n}{2\rho_n}X_n(z_{l1})+\frac{\rho_n-nv}{2\rho_n}X_n(z_{l2}),~\text{if}~u=u_l>0,v\in[-\frac{\rho_n}{n},\frac{\rho_n}{n}].
\end{aligned} \right.
\end{equation}
Here, $z_{r1}=u_r+i\frac{\rho_n}{n}$, $z_{r2}=u_r-i\frac{\rho_n}{n}$, $z_{l1}=u_l+i\frac{\rho_n}{n}$, $z_{l2}=u_l-i\frac{\rho_n}{n}$ and $\mathcal{C}_n^{-}$ is the symmetric part of $\mathcal{C}_n^{+}$ about the real axis.

We now give the asymptotic joint distribution of $(\hat{X}_n(z),Y_n)$ in Lemma \ref{lem1}.
\begin{lemma}
\label{lem1}
Under the conditions of Theorem \ref{th2}, for $z\in \mathcal{C}$, we have
\begin{equation}
(\hat{X}_n(z),Y_n)\xrightarrow{d}(X(z),Y),\label{w5}
\end{equation}
where $(X(z),Y)\sim N(0,\Gamma_2)$, $$
\Gamma_2=\begin{bmatrix}
\frac{2}{(1-z)^2}& \frac{2g^\prime(0)}{1-z}\\
\frac{2g^\prime(0)}{1-z} &2(g^\prime(0))^2
\end{bmatrix}.$$
\end{lemma}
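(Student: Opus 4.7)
The strategy is to reduce $\hat X_n(z)$ and $Y_n$ to functionals of a single scalar quantity and apply a joint CLT. The key observation is that the target covariance $\Gamma_2$ has determinant
\[
\frac{2}{(1-z)^2}\cdot 2(g'(0))^2 - \left(\frac{2g'(0)}{1-z}\right)^2 = 0,
\]
so the limit $(X(z),Y)$ is supported on the line $X(z)=Y/[(1-z)g'(0)]$ and both components must asymptotically be linear in a single Gaussian. Writing $\bbw_j:=\bbB\bbx_j$, so that $(A.2)$ gives $\E[\bbw_j\bbw_j^\top]=\bbB\bbB^\top=\bbI_p$, I expect the common ``master CLT statistic'' to be
\[
S_n := \frac{2}{n\sqrt p}\sum_{1\le j<k\le n}\bbw_j^\top\bbw_k,
\]
for which $\mathrm{Var}(\bbw_j^\top\bbw_k)=p$ and hence $\mathrm{Var}(S_n)=\frac{4}{n^2 p}\binom n2 p\to 2$.

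First I will carry out the standard RMT preliminary reductions: truncate the entries of $\bbX_n$ at a level allowed by $EX_{11}^4<\infty$; replace the centered $\tilde{\mathbb{S}}_n$ by the uncentered $\tilde{\bbS}_n$ via Sherman--Morrison applied to the rank-one difference $\bar\bbw\bar\bbw^\top$ (permissible on $\mathcal{C}_n^+\cup\mathcal{C}_n^-$, where the resolvent norms of $\tilde{\bbS}_n-z\bbI_p$ are bounded because $\|\tilde{\bbS}_n-\bbI_p\|_{\mathrm{op}}=O_P(\sqrt{c_n})$ keeps the spectrum inside $[u_l,u_r]$ for large $n$, and the interpolation in the definition of $\hat X_n$ transfers the argument cleanly over the short segments near the real axis); and replace $\|\bbB\bar\bbx\|^2=\|\bar\bbw\|^2$ in the denominator of $X_n(z)$ by $c_n$.

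Next I will linearize. For $Y_n$, a Taylor expansion of $g$ at $c_n$ gives $Y_n=g'(\xi_n)\cdot\tfrac{n}{\sqrt p}(\|\bar\bbw\|^2-c_n)$ with $\xi_n\xrightarrow{P}0$, and decomposing
\[
\|\bar\bbw\|^2-c_n=\tfrac{1}{n^2}\sum_j(\|\bbw_j\|^2-p)+\tfrac{2}{n^2}\sum_{j<k}\bbw_j^\top\bbw_k
\]
-- the diagonal sum being $o_P(\sqrt p/n)$ by its variance -- yields $Y_n=g'(0)\,S_n+o_P(1)$. For $\hat X_n(z)$, I will use the resolvent identity $(\tilde\bbS_n-z\bbI_p)^{-1}-m(z)\bbI_p=-(1-z)^{-1}(\tilde\bbS_n-\bbI_p)(\tilde\bbS_n-z\bbI_p)^{-1}$, substitute $\bar\bbw=n^{-1}\sum_j\bbw_j$ and $\tilde\bbS_n-\bbI_p=n^{-1}\sum_\ell(\bbw_\ell\bbw_\ell^\top-\bbI_p)$, and classify the resulting quadruple index sum by coincidence pattern. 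After combining the diagonal-pattern contributions with the bias from the Stieltjes deterministic equivalent of $\tilde\bbS_n$ and the delta-method bias from the denominator replacement, the leading stochastic term should simplify to
\[
\hat X_n(z)=\frac{1}{1-z}\,S_n+o_P(1),
\]
which is precisely the linear relation forced by the singular $\Gamma_2$.

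Finally I will apply the martingale CLT to $S_n$. With filtration $\mathcal F_k:=\sigma(\bbw_1,\ldots,\bbw_k)$, write $S_n=\sum_{k=2}^n D_k$ where $D_k:=\tfrac{2}{n\sqrt p}\bbw_k^\top\sum_{j<k}\bbw_j$ are $\mathcal F_k$-martingale differences; the conditional variance sum
\[
\sum_{k=2}^n\E[D_k^2\mid\mathcal F_{k-1}]=\frac{4}{n^2p}\sum_{k=2}^n\Big\|\sum_{j<k}\bbw_j\Big\|^2
\]
converges in probability to $2$ (its expectation equals $\frac{4}{n^2p}\sum_{k=2}^n(k-1)p\to 2$ and its variance vanishes), and Lindeberg's condition follows from the truncation and $EX_{11}^4<\infty$. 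Hence $S_n\xrightarrow{d}N(0,2)$, and combined with the linearizations $(\hat X_n(z),Y_n)\xrightarrow{d}(X(z),Y)$ with covariance $\Gamma_2$. The main technical obstacle is the linearization of $\hat X_n(z)$: the naive Neumann expansion of the resolvent produces an apparent coefficient $1/(1-z)^2$ and a deterministic bias of order $\sqrt p\,c_n$, so showing that this bias is exactly cancelled by the delta-method bias from replacing $\|\bar\bbw\|^2$ by $c_n$ and that the true leading stochastic coefficient is $1/(1-z)$ requires careful bookkeeping of every fourth-order product $\bbw_i^\top\bbw_j\bbw_\ell^\top\bbw_k$ arising from the substitution.
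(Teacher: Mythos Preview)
Your degeneracy observation---that $\det\Gamma_2=0$, so the limiting pair must lie on a line and both components should be multiples of a single scalar Gaussian---is correct and leads to a genuinely different argument from the paper's. The paper never uses this structure: it writes $a_1\hat X_n(z)+a_2Y_n=\sum_{j}l_j(z)+o_P(1)$ with
\[
l_j(z)=\frac{2a_1}{\sqrt p}\,E_j\!\bigl(\bbx_j^\top\bbB^\top\bbA_{nj}^{-1}(z)\bbB\bar\bbx_j\bigr)+\frac{2a_2g'(0)}{\sqrt p}\,E_j\!\bigl(\bar\bbx_j^\top\bbB^\top\bbB\bbx_j\bigr),
\]
and applies the martingale CLT after computing the \emph{three} conditional-covariance limits $2/[(1-z_1)(1-z_2)]$, $2/(1-z)$, and $2$ separately (equations (\ref{mn3}), (\ref{k2}), (\ref{k1})), each via resolvent manipulations of the type in Appendix~A.2. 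Your route would replace all of this by the single CLT $S_n\Rightarrow N(0,2)$, and as a bonus it delivers the finite-dimensional convergence for any tuple $(z_1,\dots,z_r)$ for free, since every $\hat X_n(z_i)$ collapses to the same $S_n$.

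The gap is exactly the step you flag. The reduction $\hat X_n(z)=\tfrac{1}{1-z}S_n+o_P(1)$ is true, but your sketch---one resolvent identity followed by ``classifying coincidence patterns'' in $\bbw_i^\top\bbw_j\,\bbw_\ell^\top\bbw_k$---does not prove it. After the identity the residual is $\tfrac{n}{\sqrt p}\,\bar\bbx^\top\bbB^\top(\tilde\bbS_n-\bbI_p)(\tilde\bbS_n-z\bbI_p)^{-1}\bbB\bar\bbx$; the operator-norm estimates $\|\tilde\bbS_n-\bbI_p\|=O_P(\sqrt{c_n})$ and $\|\bbB\bar\bbx\|^2=O_P(c_n)$ give only $O_P(p/\sqrt n)$, which is \emph{not} $o_P(1)$ when $\eta\le\tfrac12$ in $(A.3)$. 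So cancellation inside the quadratic form is essential, and your outline still contains the full resolvent, so the ``quadruple sum'' is not a finite combinatorial object you can classify. A much faster route to your reduction is to borrow the paper's martingale representation $M_n^{(1)}(z)=\sum_jY_j(z)+o_P(1)$ with $Y_j(z)=\tfrac{2}{\sqrt p}E_j(\bbx_j^\top\bbB^\top\bbA_{nj}^{-1}(z)\bbB\bar\bbx_j)$, and then bound the martingale $\sum_j\bigl(Y_j(z)-\tfrac{1}{1-z}D_j\bigr)$ directly: conditioning on $\bbx_j$ and using $\|\bbA_{nj}^{-1}(z)-\tfrac{1}{1-z}\bbI_p\|^2=O(c_n)$ on the high-probability event of Lemma~A.5 gives $E|Y_j(z)-\tfrac{1}{1-z}D_j|^2=O(c_n^2/p)$, hence $\sum_j E|\cdot|^2=O(c_n)=o(1)$, which is exactly the linearization you need.
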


To transfer Lemma \ref{lem1} to Theorem \ref{th2}, we introduce a new ESD function
$$F_2^{\tilde{\mathbb{S}}_n}(x)=\sum_{j=1}^pt_j^2I(\lambda_j\leq x),~x\in \mathbb{R},$$ where $\bbt=(t_1,\ldots,t_p)^\top=\bbV\bbB\bar\bbx/\|\bbB\bar\bbx\|$, $\bbB=\boldsymbol{\Sigma}_p^{-\frac{1}{2}}\bbU\boldsymbol{\Gamma}$ and $\bbV$ is the eigenvector matrix of $\tilde{\mathbb{S}}_n$ defined by \eqref{nt3} (see Bai et al.\cite{Bai 2007}). Following Theorem  1 in Bai et al.\cite{Bai 2007} and Remark 3 in Pan and Zhou \cite{Pan 2011}, one can similarly obtain that as $p/n\rightarrow0$, $$F_2^{\tilde{\mathbb{S}}_{n}}(x)\rightarrow H(x),~~\text{ a.s.},$$
where $H(x)=I(1\leq x)$ for $x\in \mathbb{R}$. Then, by analyticity of $f(x)$,  $\frac{\bar\bbx^\top \bbB^\top f(\tilde{\mathbb{S}}_{n})\bbB\bar\bbx}{\|\bbB\bar\bbx\|^2}$ in Theorem \ref{th2} is transferred to   $\frac{\bar\bbx^\top\bbB^\top(\tilde{\mathbb{S}}_n-z\bbI_p)^{-1}\bbB\bar\bbx}{\|\bbB\bar\bbx\|^2}$ and Stieljes transform of $F_2^{\tilde{\mathbb{S}}_{n}}(x)$. Let $\bbA^{-1}_n(z)=(\tilde{\bbS}_n+z\bbI_p)^{-1}$.
Note that
\begin{equation}
\frac{\bar\bbx^\top\bbB^\top\bbA_n^{-1}(z)\bbB\bar\bbx}{1-\bar\bbx^\top\bbB^\top\bbA_n^{-1}(z)\bbB\bar\bbx}=\bar\bbx^\top\bbB^\top(\tilde{\mathbb{S}}_n-z\bbI_p)^{-1}\bbB\bar\bbx,\label{mn0}
\end{equation}
where we use $\tilde{\mathbb{S}}_n=\tilde{\bbS}_n-\bar{\bbx}\bar{\bbx}^{\top}$ and the identity
\begin{equation}
\bbr^\top(\bbC+a\bbr\bbr^\top)^{-1}=\frac{\bbr^\top\bbC^{-1}}{1+a\bbr^\top\bbC^{-1}\bbr}.\label{mn1}
\end{equation}
Here, $\bbC$ and $\bbC+a\bbr\bbr^\top$ are both invertible, $\bbr\in \mathbb{R}^p$ and $a\in \mathbb{R}$ (see Bai and Silverstein \cite{Bai 1998}).
In addition, by \eqref{wq4} in the Appendix,
\begin{equation}
\|\bbB\bar\bbx\|^2-c_n=O_P(\frac{\sqrt{p}}{n}).\label{wq0}
\end{equation}
So the stochastic process $X_n(z)$ in Lemma \ref{lem1} can be presented as
\begin{equation}
\frac{n}{\sqrt {p}}\Big(\bar\bbx^\top\bbB^\top(\tilde{\mathbb{S}}_n-z\bbI_p)^{-1}\bbB\bar\bbx-c_nm(z)\Big)=\frac{n}{\sqrt {p}}\Big(\frac{\bar\bbx^\top\bbB^\top\bbA_n^{-1}(z)\bbB\bar\bbx}{1-\bar\bbx^\top\bbB^\top\bbA_n^{-1}(z)\bbB\bar\bbx}-c_nm(z)\Big).\label{wq5}
\end{equation}
But by \eqref{mn4} and $c_n=o(1)$, $E\bar\bbx^\top\bbB^\top\bbA_n^{-1}(z)\bbB\bar\bbx$ can be replaced by $c_nm(z)$, which implies that $1-\bar\bbx^\top\bbB^\top\bbA_n^{-1}(z)\bbB\bar\bbx=1+o_P(1)$. Consequently, $X_n(z)$ in Lemma \ref{lem1} is then reduced to the stochastic process $M_n(z)$
such that
\begin{equation}
M_n(z)=\frac{n}{\sqrt {p}}\Big(\bar\bbx^\top\bbB^\top\bbA_n^{-1}(z)\bbB\bar\bbx-c_nm(z)\Big)\nonumber
\end{equation}
for large  $p$ and $n$. For $z\in \mathcal{C}_n^+$, we decompose $M_n(z)$ into two parts as
$$M_n(z)=M_n^{(1)}(z)+M_n^{(2)}(z),$$
where
\begin{equation}
M_n^{(1)}(z)=\frac{n}{\sqrt {p}}\Big(\bar\bbx^\top\bbB^\top\bbA_n^{-1}(z)\bbB\bar\bbx-E\bar\bbx^\top\bbB^\top\bbA_n^{-1}(z)\bbB\bar\bbx\Big)\label{mn5}
\end{equation}
and
\begin{equation}
M_n^{(2)}(z)=\frac{n}{\sqrt {p}}\Big(E\bar\bbx^\top\bbB^\top\bbA_n^{-1}(z)\bbB\bar\bbx-c_nm(z)\Big).\label{mn6}
\end{equation}

Let $$\bbB=\boldsymbol{\Sigma}_p^{-\frac{1}{2}}\bbU\boldsymbol{\Gamma}=(b_{ij})_{p\times m}=(\bbb_1,\ldots,\bbb_m),$$
$$\|\bbb_j\|=\Big(\sum\limits_{i=1}^p b_{ij}^2\Big)^{1/2},~~\bbb_j=(b_{1j},\ldots,b_{pj})^\top,~~1\leq j\leq m.$$

In the following proof, we assume without loss of generally that $\boldsymbol{\mu}=\boldsymbol{0}$.
As a consequence of Lemma A.7 in Appendix A.5, for $1\leq i\leq m$, $1\leq j\leq n$, the random variables $X_{ij}$ in the proof are truncated, i.e.
\begin{equation}
EX_{11}=0, ~EX_{11}^2=1,~EX_{11}^4<\infty,~|X_{ij}|\leq (np)^{1/4}/\|\bbb_i\|, ~1\leq i\leq m,1\leq j\leq n.\label{wa4}
\end{equation}

\subsection{\bf Convergence of finite dimensional distribution of $M_n^{(1)}(z)$}

In this section we prove that for any positive integer $r$ and complex numbers $a_1,\ldots,a_r$,
$$\sum_{i=1}^ra_iM_n^{(1)}(z_i),~~\mathfrak{I}(z_i)\neq 0,$$
converges to a Gaussian random variable. We also derive the asymptotic covariance functions. Under the truncated assumption (\ref{wa4}), for $1\leq i,j\leq n$, write
$$\bbx_j=(X_{1j},X_{2j},\ldots,X_{mj})^\top,~~\bar{\bbx}=\frac{1}{n}\sum_{j=1}^n\bbx_j,~~\bar\bbx_j=\bar\bbx-\frac{1}{n}\bbx_j,$$
$$\tilde{\bbS}_n=\frac{1}{n}\sum\limits_{j=1}^n\bbB\bbx_j\bbx_j^\top\bbB^\top,~~\tilde{\bbS}_{nj}=\tilde{\bbS}_n-\frac{1}{n}\bbB\bbx_j\bbx_j^\top\bbB^\top,$$
$$\bbA_{n}^{-1}(z)=(\tilde{\bbS}_{n}-z\bbI_p)^{-1},\bbA_{nj}^{-1}(z)=(\tilde{\bbS}_{nj}-z\bbI_p)^{-1}=(\tilde{\bbS}_n-\frac{1}{n}\bbB\bbx_j\bbx_j^\top\bbB^\top-z\bbI_p)^{-1},$$
$$\bbA_{nij}^{-1}(z)=(\tilde{\bbS}_{nij}-z\bbI_p)^{-1}=(\tilde{\bbS}_n-\frac{1}{n}\bbB\bbx_i\bbx_i^\top\bbB^\top-\frac{1}{n}\bbB\bbx_j\bbx_j^\top\bbB^\top-z\bbI_p)^{-1},$$
$$\bbD_{nj}(z)=\bbA_{nj}^{-1}(z)\bbB\bar\bbx_j\bar\bbx_j^\top\bbB^\top\bbA_{nj}^{-1}(z),$$
$$\beta_j(z)=\frac{1}{1+n^{-1}\bbx_j^\top\bbB^\top\bbA_{nj}^{-1}(z)\bbB\bbx_j},~\beta_j^{\text{tr}}(z)=\frac{1}{1+n^{-1}\text{tr}(\bbA_{nj}^{-1}(z))},
$$
$$\beta_{ij}(z)=\frac{1}{1+n^{-1}\bbx_i^\top\bbB^\top\bbA_{nij}^{-1}(z)\bbB\bbx_i},
~\beta_{ij}^{\text{tr}}(z)=\frac{1}{1+n^{-1}\text{tr}(\bbA_{nij}^{-1}(z))},$$
$$~b_1(z)=\frac{1}{1+n^{-1}E\text{tr}(\bbA_{n1}^{-1}(z))},b_{12}(z)=\frac{1}{1+n^{-1}E\text{tr}(\bbA_{12}^{-1}(z))},$$
$$\gamma_j(z)=\frac{1}{n}\bbx_j^\top\bbB^\top\bbA_{nj}^{-1}(z)\bbB\bbx_j-\frac{1}{n}\text{tr}(\bbA_{nj}^{-1}(z)),
\gamma_{ij}(z)=\frac{1}{n}\bbx_i^\top\bbB^\top\bbA_{nij}^{-1}(z)\bbB\bbx_i-\frac{1}{n}\text{tr}(\bbA_{nij}^{-1}(z)),$$
$$\xi_j(z)=\frac{1}{n}\bbx_j^\top\bbB^\top\bbA_{nj}^{-1}(z)\bbB\bbx_j-\frac{1}{n}E\text{tr}(\bbA_{nj}^{-1}(z)),
\xi_{ij}(z)=\frac{1}{n}\bbx_i^\top\bbB^\top\bbA_{nij}^{-1}(z)\bbB\bbx_i-\frac{1}{n}E\text{tr}(\bbA_{n12}^{-1}(z)),$$
$$\alpha_j(z)=\frac{1}{n}\bbx_j^\top\bbB^\top\bbA_{nj}^{-1}(z)\bbB\bar\bbx_j\bar\bbx_j^\top\bbB^\top\bbA_{nj}^{-1}(z)\bbB\bbx_j
-\frac{1}{n}\bar\bbx_j^\top\bbB^\top\bbA_{nj}^{-1}(z)\bbA_{nj}^{-1}(z)\bbB\bar\bbx_j.$$
Next, we list some important results of quadratic forms. In the following, to facilitate the analysis, we assume $v=\mathfrak{I}(z)>0$. It can be found that $\beta_j(z)$, $\beta_j^{\text{tr}}(z)$,
$\beta_{ij}(z)$, $\beta_{ij}^{\text{tr}}(z)$, $b_1(z)$, $b_{12}(z)$ are bounded by $|z|/v$ (see (3.4) of Bai and Silverstein \cite{Bai 1998}). For any matrix $\bbC$, by Lemma 2.10 of Bai and Silverstein \cite{Bai 1998},
\begin{equation}
|\text{tr}[(\bbA_n^{-1}(z)-\bbA_{nj}^{-1}(z))\bbC]|\leq \|\bbC\|/v,\label{a2}
\end{equation}
where $\|\cdot\|$ denotes the spectral norm of a matrix. In addition, by (\ref{mn1}),
\begin{eqnarray}
\bbA_n^{-1}(z)-\bbA_{nj}^{-1}(z)&=&\bbA_n^{-1}(z)[\bbA_{nj}(z)-\bbA_n(z)]\bbA_{nj}^{-1}(z)\nonumber\\
&=&-\frac{1}{n}\beta_j(z)\tilde\bbA_{nj}(z),\label{a1}
\end{eqnarray}
where $\tilde\bbA_{nj}(z)=\bbA_{nj}^{-1}(z)\bbB\bbx_j\bbx_j^\top\bbB^\top\bbA_{nj}^{-1}(z)$.
Then, by Lemma A.1 in Appendix A.1, it is easy to obtain that
\begin{equation}
E|\gamma_1-\xi_1|^k=n^{-k}E|\text{tr}(\bbA_{n1}^{-1}(z))-E\text{tr}(\bbA_{n1}^{-1}(z))|^k=O(n^{-k/2}),~k\geq 2,\label{a3}
\end{equation}
(or see Bai and Silverstein \cite{Bai 1998}).
This also holds when $\bbA_{ni}^{-1}(z)$ is replaced by $\bbA_{nij}^{-1}(z)$.
Next, we list some useful results of (\ref{a5c})-(\ref{aa3}). For simplicity, we assume that the spectral norms of nonrandom $\bbC$, $\bbD$, $\bbH$, $\bbC_i$, $\bbD_i$ involved in (\ref{a5})-(\ref{a11}) below are all bounded by some positive constants. The proofs of (\ref{a5c})-(\ref{a11}) will be given in Appendix A.6.
\begin{eqnarray}
~~~~\frac{1}{n^{k}}E|\bbx_1^\top\bbB^\top\bbC\bbB\bbx_1-\text{tr}(\bbC)|^k&=&O(\frac{p^{k/2}}{n^{k}}),1<k\leq 2,\label{a5c}\\
~~~~\frac{1}{n^{k}}E|\bbx_1^\top\bbB^\top\bbC\bbB\bbx_1-\text{tr}(\bbC)|^k&=&O(\frac{p^{k/2+1}}{n^k})+O(\frac{p^{k/2}}{n^{k/2+1}}),k> 2,\label{a5}\\
E|\xi_1(z)|^2&=&O(\frac{p}{n^2})+O(n^{-1})=O(n^{-1}),~~\label{a6c}\\
E|\xi_1(z)|^k&=&O(\frac{p^{\frac{k+2}{2}}}{n^k}+O(\frac{p^{\frac{k}{2}}}{n^{\frac{k+2}{2}}})+O(n^{-\frac{k}{2}}),k>2,~~~~~~~~\label{a6}\\
E|\bbx_1^\top\bbB^\top\bbC\bbe_i\bbe_j^\top\bbD\bbB\bbx_1|^k&=&O(\frac{p^{k/2-1}}{n^{k/2+1}}),k\geq 2,\label{a7}\\
E|\bbx_1^\top\bbB^\top\bbC\bbB\bar\bbx_1|^2&=&O(\frac{p}{n}),\label{a12}\\
E|\bbx_1^\top\bbB^\top\bbC\bbB\bbx_2|^k&=&O((np)^{k/2-1}),~~k\geq 4,\label{a10}\\
E|\bar\bbx_1^\top\bbB^\top\bbC\bbB\bar\bbx_1|^k&=&O(\frac{p^{k/2}}{n^{k/2}}),k\geq 4,\label{a14}\\
E|\bar\bbx_1^\top\bbB^\top\bbC\bbB\bar\bbx_1|^2&=&O(\frac{p^{2}}{n^{2}}),\label{wa1}\\
E|\bbx_1^\top\bbB^\top\bbC\bbB\bar\bbx_1|^k&=&O(\frac{p^{k/4}}{n^{k/4}}),~~k\geq 4,\label{a8}\\
E|\alpha_1(z)|^2&=&O(\frac{p^2}{n^{4}}),\label{wa3}
\end{eqnarray}
\begin{equation}
E\Big|\prod_{i=1}^m\frac{1}{n}\bbx_1^\top\bbB^\top\bbC_i\bbB\bbx_1\prod_{j=1}^q\frac{1}{n}[\bbx_1^\top\bbB^\top\bbD_j\bbB\bbx_1
-\text{tr}(\bbD_j)](\bbx_1^\top\bbB^\top\bbH\bbB\bar\bbx_1)^r\Big|=O(a_n),\label{a11}
\end{equation}
where $a_n=\frac{\sqrt{p}}{n}$, $m\geq 0$, $q\geq 1 $, $0\leq r\leq 2$ and $x\vee y$ means $\max(x,y)$. Moreover, by $p/n\rightarrow0$ as $(p,n)\rightarrow\infty$, one has
\begin{equation}
b_1(z)-1=-\frac{n^{-1}E\text{tr}(\bbA_{n1}^{-1}(z))}{1+n^{-1}E\text{tr}(\bbA_{n1}^{-1}(z))}=O(\frac{p}{n}). \label{aa1}
\end{equation}
In view of the fact that $\beta_1(z)$, $\beta_1^{\text{tr}}(z)$, $b_1(z)$ are bounded by $|z|/v$, by (\ref{a5c}), we have
\begin{equation}
\beta_1(z)-\beta_1^{\text{tr}}(z)=O_P(\frac{\sqrt p}{n}),\label{aa2}
\end{equation}
and by (\ref{a3}), we have
\begin{equation}
\beta_1^{\text{tr}}(z)-b_1(z)=O_P(n^{-1/2}).\label{aa3}
\end{equation}

Now, we consider the term of $M_n^{(1)}(z)$. In the following, let $\mathcal{F}_0=\sigma(\emptyset,\Omega)$ and $\mathcal{F}_j=\sigma(\bbx_1,\ldots,\bbx_j)$, $j\geq 1$. So $E_j(\cdot)=E(\cdot|\mathcal F_j)$ denotes the conditional expectation with respect to $\mathcal{F}_j$, $j\geq 0$. It can be checked that
\begin{eqnarray}
M_n^{(1)}(z)&=&\frac{n}{\sqrt {p}}\Big(\bar\bbx^\top\bbB^\top\bbA_n^{-1}(z)\bbB\bar\bbx-E\bar\bbx^\top\bbB^\top\bbA_n^{-1}(z)\bbB\bar\bbx\Big)\nonumber\\
&=&\frac{n}{\sqrt{p}}\sum_{j=1}^n(E_j-E_{j-1})[\bar\bbx^\top\bbB^\top\bbA_n^{-1}(z)\bbB\bar\bbx-\bar\bbx_j^\top\bbB^\top\bbA_{nj}^{-1}(z)\bbB\bar\bbx_j)]\nonumber\\
&=&\frac{n}{\sqrt {p}}\sum_{j=1}^n(E_j-E_{j-1})(a_{n1}+a_{n2}+a_{n3}),\label{b1}
\end{eqnarray}
where
$$a_{n1}=(\bar\bbx-\bar\bbx_j)^\top\bbB^\top\bbA_n^{-1}(z)\bbB\bar\bbx,~~a_{n2}=\bar\bbx_j^\top\bbB^\top(\bbA_n^{-1}(z)-\bbA_{nj}^{-1}(z))\bbB\bar\bbx,$$
$$a_{n3}=\bar\bbx_j^\top\bbB^\top\bbA_{nj}^{-1}(z)\bbB(\bar\bbx-\bar\bbx_j).$$

We estimate the first term $a_{n1}$. In view of $$\bbA_n^{-1}(z)=[\bbA_n^{-1}(z)-\bbA_{nj}^{-1}(z)]+\bbA_{nj}^{-1}(z)~~\text{and}~~\bar\bbx=\bar\bbx_j+\bbx_j/n,$$ we have by (\ref{a1}) that
\begin{equation}
a_{n1}=a_{n1}^{(1)}+a_{n1}^{(2)}+a_{n1}^{(3)}+a_{n1}^{(4)},\label{b2}
\end{equation}
where
$$a_{n1}^{(1)}=-\frac{1}{n^3}(\bbx_j^\top\bbB^\top\bbA_{nj}^{-1}(z)\bbB\bbx_j)^2\beta_j(z),
~~~a_{n1}^{(2)}=-\frac{1}{n^2}\bbx_j^\top\bbB^\top\tilde\bbA_{nj}(z)\bbB\bar\bbx_j\beta_j(z)$$
and
$$a_{n1}^{(3)}=\frac{1}{n^2}\bbx_j^\top\bbB^\top\bbA_{nj}^{-1}(z)\bbB\bbx_j,
~~~~a_{n1}^{(4)}=\frac{1}{n}\bbx_j^\top\bbB^\top\bbA_{nj}^{-1}(z)\bbB\bar\bbx_j.$$
Since
\begin{equation}
\beta_j(z)=\beta_j^{\text{tr}}(z)-\beta_j(z)\beta_j^{\text{tr}}(z)\gamma_j(z),\label{b3}
\end{equation}
it follows that
\begin{eqnarray*}
&&(E_j-E_{j-1})a_{n1}^{(1)}=(E_j-E_{j-1})[-\frac{1}{n^3}(\bbx_j^\top\bbB^\top\bbA_{nj}^{-1}(z)\bbB\bbx_j)^2\beta_j^{\text{tr}}(z)]+\zeta_n\\
&=&(E_j-E_{j-1})[-\frac{1}{n}\gamma_j^2(z)\beta_j^{\text{tr}}(z)]
+(E_j-E_{j-1})[-\frac{2}{n}\gamma_j(z)\beta_j^{\text{tr}}(z)\frac{1}{n}\text{tr}(\bbA_{nj}^{-1}(z))]+\zeta_n,
\end{eqnarray*}
where
$$\zeta_n=(E_j-E_{j-1})[\frac{1}{n^3}(\bbx_j^\top\bbB^\top\bbA_{nj}^{-1}(z)\bbB\bbx_j)^2\beta_j(z)\beta_j^{\text{tr}}(z)\gamma_j(z)].$$
Thus, by (\ref{a11}),
\begin{eqnarray*}
&&E\Big|\frac{n}{\sqrt {p}}\sum_{j=1}^n(E_j-E_{j-1})a_{n1}^{(1)}\Big|^2\leq C_1\frac{n^2}{p}\sum_{j=1}^nE|(E_j-E_{j-1})a_{n1}^{(1)}|^2\\
&\leq&C_2\frac{n^3}{p}\{\frac{1}{n^2}E|\gamma_1(z)|^4+\frac{p^2}{n^4}E|\gamma_1(z)|^2
+\frac{1}{n^2}E|\gamma_{1}(z)(\frac{1}{n}\bbx_1^\top\bbB^\top\bbA_{n1}^{-1}(z)\bbB\bbx_1)^2|^2\}\\
&=&O\Big(\frac{n^3}{p}\frac{\sqrt{p}}{n^3}\Big)=O(p^{-1/2}),
\end{eqnarray*}
which implies
\begin{equation}
\frac{n}{\sqrt {p}}\sum_{j=1}^n(E_j-E_{j-1})a_{n1}^{(1)}=o_P(1).\label{b7}
\end{equation}
Similarly, by (\ref{a5c}), $\bbB\bbB^\top=\bbI_p$ and $p/n=o(1)$,
\begin{eqnarray*}
&&E\Big|\frac{n}{\sqrt {p}}\sum_{j=1}^n(E_j-E_{j-1})a_{n1}^{(3)}\Big|^2\leq C_1\frac{n^2}{p}\sum_{j=1}^nE|(E_j-E_{j-1})a_{n1}^{(3)}|^2\\
&\leq&C_2\frac{n^3}{p}\frac{1}{n^2}\Big(E|\frac{1}{n}\bbx_1^\top\bbB^\top\bbA_{n1}^{-1}(z)\bbB\bbx_1
-\frac{1}{n}\text{tr}(\bbA_{n1}^{-1}(z))|^2+E(\frac{\text{tr}(\bbA_{n1}^{-1}(z))}{n})^2\Big)\\
&\leq&C_2\frac{n}{p}(\frac{p}{n^2}+\frac{p^2}{n^2})=O(\frac{1}{n}+\frac{p}{n}).
\end{eqnarray*}
Then, we obtain that
\begin{equation}
\frac{n}{\sqrt {p}}\sum_{j=1}^n(E_j-E_{j-1})a_{n1}^{(3)}=o_P(1).\label{b8}
\end{equation}
In addition, it follows from (\ref{a11}) that
\begin{eqnarray*}
&&E\Big|\frac{n}{\sqrt {p}}\sum_{j=1}^n(E_j-E_{j-1})\gamma_j(z)\frac{1}{n}\bbx_j^\top\bbB^\top\bbA_{nj}^{-1}(z)\bbB\bar\bbx_j\beta_j^{\text{tr}}(z)\Big|^2\\
&\leq&\frac{C_1}{p}\sum_{j=1}^n E|(E_j-E_{j-1})\gamma_j(z)\bbx_j^\top\bbB^\top\bbA_{nj}^{-1}(z)\bbB\bar\bbx_j\beta_j^{\text{tr}}(z)|^2=O(\frac{n}{p}\frac{\sqrt{p}}{n})=O(p^{-1/2})
\end{eqnarray*}
and
\begin{eqnarray*}
&&E\Big|\frac{n}{\sqrt {p}}\sum_{j=1}^n (E_j-E_{j-1})\frac{1}{n^2}\bbx_j^\top\bbB^\top\tilde{\bbA}_{nj}(z)\bbB\bar\bbx_j\beta_j(z)\gamma_j(z)\beta_j^{\text{tr}}(z)\Big|^2\nonumber\\
&\leq&\frac{C_1}{p}\sum_{j=1}^n E|(E_j-E_{j-1})\gamma_j^2(z)\bbx_j^\top\bbB^\top\bbA_{nj}^{-1}(z)\bbB\bar\bbx_j\beta_j(z)\beta_j^{\text{tr}}(z)|^2\nonumber\\
&&+\frac{C_2}{p}\sum_{j=1}^n E|(E_j-E_{j-1})\frac{1}{n}\text{tr}(\bbA_{nj}^{-1}(z))\gamma_j(z)\bbx_j^\top\bbB^\top\bbA_{nj}^{-1}(z)\bbB\bar\bbx_j\beta_j(z)\beta_j^{\text{tr}}(z)|^2\nonumber\\
&=&O(\frac{n}{p}(\frac{\sqrt{p}}{n}+\frac{p^2}{n^{2}}\frac{\sqrt{p}}{n}))=O(p^{-1/2}).
\end{eqnarray*}
Combining \eqref{b3} with $\bbB\bbB^\top=\bbI_p$, we obtain that
\begin{eqnarray}
&&\frac{n}{\sqrt {p}}\sum_{j=1}^n(E_j-E_{j-1})a_{n1}^{(2)}\nonumber\\
&=&-\frac{n}{\sqrt{p}}\sum_{j=1}^n(E_j-E_{j-1})\frac{1}{n^2}\bbx_j^\top\bbB^\top\tilde\bbA_{nj}(z)\bbB\bar\bbx_j(\beta_j^{\text{tr}}(z)-\beta_j(z)\beta_j^{\text{tr}}(z)\gamma_j(z))\nonumber\\
&=&-\frac{n}{\sqrt{p}}\sum_{j=1}^n(E_j-E_{j-1})\frac{1}{n^2}\bbx_j^\top\bbB^\top\tilde\bbA_{nj}(z)\bbB\bar\bbx_j\beta_j^{\text{tr}}(z)+o_P(1)\nonumber\\
&=&-\frac{n}{\sqrt{p}}\sum_{j=1}^n(E_j-E_{j-1})[\gamma_j(z)+\frac{1}{n}\text{tr}(\bbA_{nj}^{-1}(z))]\frac{1}{n}\bbx_j^\top\bbB^\top\bbA_{nj}^{-1}(z)\bbB\bar\bbx_j\beta_j^{\text{tr}}(z)+o_P(1)\nonumber\\
&=&-\frac{n}{\sqrt{p}}\sum_{j=1}^n(E_j-E_{j-1})\frac{\text{tr}(\bbA_{nj}^{-1}(z))}{n}\frac{1}{n}\bbx_j^\top\bbB^\top\bbA_{nj}^{-1}(z)\bbB\bar\bbx_j\beta_j^{\text{tr}}(z)+o_P(1)\nonumber
\end{eqnarray}
Obviously, by (\ref{a12}) and Lemma A.1 in Appendix A.1,
\begin{eqnarray}
&&E\Big|\sum_{j=1}^n(E_j-E_{j-1})\frac{\text{tr}(\bbA_{nj}^{-1}(z))}{n}\frac{1}{n}\bbx_j^\top\bbB^\top\bbA_{nj}^{-1}(z)\bbB\bar\bbx_j\Big|^2\nonumber\\
&=&\sum_{j=1}^nE\Big|(E_j-E_{j-1})\frac{\text{tr}(\bbA_{nj}^{-1}(z))}{n}\frac{1}{n}\bbx_j^\top\bbB^\top\bbA_{nj}^{-1}(z)\bbB\bar\bbx_j\Big|^2\leq C_1\frac{p^3}{n^4}.\nonumber
\end{eqnarray}
Moreover, by (\ref{aa1}) and (\ref{aa3}), $\beta_j^{\text{tr}}(z)=1+O_P(n^{-1/2})+O(p/n)$. Thus,
\begin{equation}
\frac{n}{\sqrt {p}}\sum_{j=1}^n(E_j-E_{j-1})a_{n1}^{(2)}=\frac{n}{\sqrt {p}}O_P(\frac{p^{3/2}}{n^2})=O_P(\frac{p}{n})=o_P(1).\label{ad1}
\end{equation}
Consequently, by (\ref{b2}), \eqref{b7}, \eqref{b8} and (\ref{ad1}), we obtain that
\begin{eqnarray}
&&\frac{n}{\sqrt{p}}\sum_{j=1}^n(E_j-E_{j-1})a_{n1}=\frac{n}{\sqrt{p}}\sum_{j=1}^n(E_j-E_{j-1})a_{n1}^{(4)}+o_P(1)\nonumber\\
&=&\sum_{j=1}^nE_j[\frac{1}{\sqrt{p}}\bbx_j^\top\bbB^\top\bbA_{nj}^{-1}(z)\bbB\bar\bbx_j]+o_P(1).\label{b4}
\end{eqnarray}
Now, we consider the second term $a_{n2}$. By $\bar\bbx=\bar\bbx_j+\frac{1}{n}\bbx_j$, we have
\begin{equation}
a_{n2}=-\frac{1}{n^2}\bar\bbx_j^\top\bbB^\top\tilde\bbA_{nj}(z)\bbB\bbx_j\beta_j(z)
-\frac{1}{n}\bar\bbx_j^\top\bbB^\top\tilde\bbA_{nj}(z)\bbB\bar\bbx_j\beta_j(z).\label{kk2}
\end{equation}
By $\bbB\bbB^\top=\bbI_p$,
\begin{eqnarray}
&&\frac{1}{\sqrt{p}}\sum_{j=1}^n(E_j-E_{j-1})[\bar\bbx_j^\top\bbB^\top\tilde\bbA_{nj}(z)\bbB\bar\bbx_j]\nonumber\\
&=&\frac{1}{\sqrt{p}}\sum_{j=1}^n(E_j-E_{j-1})[\bar\bbx_j^\top\bbB^\top\bbA_{nj}^{-1}(z)\bbB\bbx_j\bbx_j^\top\bbB^\top\bbA_{nj}^{-1}(z)\bbB\bar\bbx_j]\nonumber\\
&=&\frac{n}{\sqrt{p}}\sum_{j=1}^nE_j\frac{1}{n}[\bbx_j^\top\bbB^\top\bbA_{nj}^{-1}(z)\bbB\bar\bbx_j\bar\bbx_j^\top\bbB^\top\bbA_{nj}^{-1}(z)\bbB\bbx_j
-\bar\bbx_j^\top\bbB^\top\bbA_{nj}^{-1}(z)\bbA_{nj}^{-1}(z)\bbB\bar\bbx_j]\nonumber\\
&=&\frac{n}{\sqrt{p}}\sum_{j=1}^nE_j\alpha_j(z).\nonumber
\end{eqnarray}
By (\ref{wa3}),
\begin{eqnarray}
E\Big|\frac{n}{\sqrt{p}}\sum_{j=1}^nE_j\alpha_j(z)\Big|^2&=&E\Big|\frac{n}{\sqrt{p}}\sum_{j=1}^n(E_j-E_{j-1})\alpha_j(z)\Big|^2\nonumber\\
&\leq& \frac{C_1n^2}{p}\sum_{j=1}^nE|(E_j-E_{j-1})\alpha_j(z)|^2=O(\frac{p}{n}).\nonumber
\end{eqnarray}
Combining it with the fact that $p/n=o(1)$, we obtain that
\begin{equation}
\frac{1}{\sqrt{p}}\sum_{j=1}^n(E_j-E_{j-1})[\bar\bbx_j^\top\bbB^\top\tilde\bbA_{nj}(z)\bbB\bar\bbx_j]=\frac{n}{\sqrt{p}}\sum_{j=1}^nE_j\alpha_j(z)=o_P(1).\label{ad2}
\end{equation}
Obviously, it follows from (\ref{aa1}), (\ref{aa2}) and (\ref{aa3}) that
$$\beta_j(z)=1+O_P(n^{-1/2})+O(\frac{p}{n})~~\text{and}~~ \beta_j^{\text{tr}}(z)=1+O_P(n^{-1/2})+O(\frac{p}{n}).$$
Thus, similarly to $a_{n1}^{(2)}$, we establish by (\ref{ad1}), (\ref{kk2}) and (\ref{ad2}) that
\begin{eqnarray}
&&\frac{n}{\sqrt{p}}\sum_{j=1}^n(E_j-E_{j-1})a_{n2}\nonumber\\
&=&-\sum_{j=1}^n(E_j-E_{j-1})[(1-\beta_j^{\text{tr}}(z))\frac{1}{\sqrt{p}}\bar\bbx_j^\top\bbB^\top\bbA_{nj}^{-1}(z)\bbB\bbx_j]\nonumber\\
&&-\frac{1}{\sqrt{p}}\sum_{j=1}^n(E_j-E_{j-1})[\bar\bbx_j^\top\bbB^\top\tilde\bbA_{nj}(z)\bbB\bar\bbx_j\beta_j(z)]+o_P(1)\nonumber\\
&=&(O_P(n^{-1/2})+O(\frac{p}{n}))\sum_{j=1}^nE_j[\frac{1}{\sqrt{p}}\bar\bbx_j^\top\bbB^\top\bbA_{nj}^{-1}(z)\bbB\bbx_j]\nonumber\\
&&-(1+O_P(n^{-1/2})+O(\frac{p}{n}))\frac{1}{\sqrt{p}}\sum_{j=1}^n(E_j-E_{j-1})[\bar\bbx_j^\top\bbB^\top\tilde\bbA_{nj}(z)\bbB\bar\bbx_j]\nonumber\\
&=&o_P(1).\label{aa4}
\end{eqnarray}
Obviously, the term $a_{n3}$ has a similar result to that of $a_{n1}$.
Then, it follows from (\ref{b1}), (\ref{b4}) and (\ref{aa4}) that
$$M_n^{(1)}(z)=\frac{n}{\sqrt{p}}\sum_{j=1}^n(E_j-E_{j-1})(a_{n1}+a_{n3})+o_P(1)=\sum_{j=1}^nY_j(z)+o_P(1),$$
where
$$Y_j(z)=2E_j(\frac{1}{\sqrt{p}}\bbx_j^\top\bbB^\top\bbA_{nj}^{-1}(z)\bbB\bar\bbx_j),~1\leq j\leq n.$$
Consequently, for the finite dimensional convergence of $M_n^{(1)}(z)$, we only need to consider the sum
\begin{equation}
\sum_{i=1}^ra_i\sum_{j=1}^n Y_j(z_i)=\sum_{j=1}^n\sum_{i=1}^ra_iY_j(z_i),\label{b6}
\end{equation}
where $r$ is any positive integer and $a_1,\ldots,a_r$ are any complex numbers. For any $\varepsilon>0$, by (\ref{a8}),
\begin{eqnarray}
&&\sum_{j=1}^nE\Big|\sum_{i=1}^r a_iY_j(z_i)\Big|^2I(\Big|\sum_{i=1}^ra_iY_j(z_i)\Big|\geq \varepsilon)\nonumber\\
&\leq&\frac{C_1}{\varepsilon^2}\sum_{j=1}^n\sum_{i=1}^r \frac{|a_i|^4}{p^2}E|\bbx_j^\top\bbB^\top\bbA_{nj}^{-1}(z_i)\bbB\bar\bbx_j|^4=O(\frac{1}{p})=o(1),\nonumber
\end{eqnarray}
as $p\rightarrow \infty$.
Therefore, the condition $(ii)$ of Lemma A.2 in Appendix A.1 is satisfied. The next task is to verify the condition $(i)$ of Lemma A.2, i.e., to calculate the limit of
\begin{eqnarray}
&&\sum_{j=1}^nE_{j-1}(Y_j(z_1)Y_j(z_2))\nonumber\\
&=&\frac{4}{p}\sum_{j=1}^n E_{j-1}[E_j(\bbx_j^\top\bbB^\top\bbA_{nj}^{-1}(z_1)\bbB\bar\bbx_j)E_j(\bar\bbx_j^\top\bbB^\top\bbA_{nj}^{-1}(z_2)\bbB\bbx_j)]\label{mn2a}
\end{eqnarray}
in probability for $z_1,z_2\in \mathbb{C}\backslash \mathbb{R}$. By the Appendix A.2, the limit of (\ref{mn2a}) is obtained as
\begin{equation}
\text{RHS of}~(\ref{mn2a}) \xrightarrow{~~P~~}\frac{1}{(1-z_1)(1-z_2)},\label{mn3}
\end{equation}
as $p/n\rightarrow 0$ and $(p,n)\rightarrow\infty$.

\subsection{\bf Tightness of $\hat{M}^{(1)}_n(z)$}

In this subsection we prove the tightness of $\hat{M}^{(1)}_n(z)$ for $z\in \mathcal{C}$, which is a truncated version of $M_n(z)$ in (\ref{w4}).
For $Y_j(z)$ defined in (\ref{b6}), by (\ref{a12}), we have
\begin{equation}
E\Big|\sum_{i=1}^r a_i\sum_{j=1}^n Y_j(z_i)\Big|^2=\sum_{j=1}^n E|\sum_{i=1}^r a_iY_j(z_i)|^2\leq K_1,~~v_0=\mathfrak{I}(z_i),\nonumber
\end{equation}
which ensures that the condition (i) of Theorem 12.3 in Billingsley \cite{Billingsley1968} is satisfied.
By the Appendix A.3, we obtain that
\begin{equation}
E\frac{|M_n^{(1)}(z_1)-M_n^{(1)}(z_2)|^2}{|z_1-z_2|^2}\leq K_2~~~~\text{for~all}~~~z_1,z_2\in \mathcal{C}_n^{+}\cup \mathcal{C}_n^{-},\label{m1}
\end{equation}
which completes the proof of tightness.

\subsection{\bf Convergence of $M_n^{(2)}(z)$}

By the Appendix A.4, for all $z\in \mathcal{C}_n$, as $(p,n)\rightarrow\infty$, we obtain that
\begin{equation}
\sup\limits_{z\in \mathcal{C}_n}M_n^{(2)}(z)=\sup\limits_{z\in \mathcal{C}_n}\frac{n}{\sqrt{p}}\Big(E(\bar\bbx^\top\bbB^\top\bbA^{-1}_n(z)\bbB\bar\bbx)-c_nm(z)\Big)\longrightarrow0, \label{mn4}
\end{equation}
where $p/n=c_n$ and $m(z)=\int \frac{1}{x-z}dH(x)$ and $H(x)=I(1\leq x)$ for $x\in \mathbb{R}$.

\begin{proof}[Proof of Theorem \ref{th2}] Firstly, by the Cauchy integral formula, we have
$$\int f(x)dG(x)=-\frac{1}{2\pi i}\oint f(z)m_G(z)dz,$$
where the contour contains the support of $G(x)$ on which $f(x)$ is analytic and $m_G(z)=\int\frac{1}{x-z}dG(x)$ for $\mathfrak{I}(z)>0$ (see Bai and Silverstein \cite{Bai 2004}). Then, for all $n$ large enough, with convergence in probability, we obtain
\begin{eqnarray}
&&\int f(x)dG_n(x)=-\frac{1}{2\pi i}\int\oint\frac{f(z)}{x-z}dz dG_n(x)\nonumber\\
&=&-\frac{1}{2\pi i}\oint f(z)dz\int \frac{1}{x-z}dG_n(x)\nonumber\\
&=&-\frac{1}{2\pi i}\oint f(z)dz\int \frac{1}{x-z}\frac{n}{\sqrt p}c_nd\Big(\sum_{j=1}^pt_j^2I(\lambda_j\leq x)-H(x)\Big)\nonumber\\
&=&-\frac{1}{2\pi i}\oint f(z)\frac{n}{\sqrt p}c_n\Big(\sum_{j=1}^p\frac{t_j^2}{\lambda_j-z}-\frac{1}{1-z}\Big)dz\nonumber\\
&=&-\frac{1}{2\pi i}\oint f(z)\frac{n}{\sqrt p}c_n[\frac{\bar\bbx^\top\bbB^\top(\mathbb{S}_n-z\bbI)^{-1}\bbB\bar\bbx}{\|\bbB\bar\bbx\|^2}- m(z)]dz\nonumber\\
&=&-\frac{1}{2\pi i}\oint f(z)X_n(z)dz,\nonumber
\end{eqnarray}
where the complex integral is over $\mathcal{C}$, $c_n=p/n$,
$$G_n(x)=\frac{n}{\sqrt p}c_n[F_2^{\tilde{\mathbb{S}}_n}(x)-H(x)],$$
$$X_n(z)=\frac{n}{\sqrt p}c_n[\frac{\bar\bbx^\top\bbB^\top(\tilde{\mathbb{S}}_n-z\bbI_p)^{-1}\bbB\bar\bbx}{\|\bbB\bar\bbx\|^2}- m(z)],$$
$$m(z)=\int\frac{1}{x-z}dH(x),~~F_2^{\tilde{\mathbb{S}}_n}(x)=\sum_{j=1}^pt_j^2I(\lambda_j\leq x),~~H(x)=I(1\leq x),$$
$\bbt=(t_1,\ldots,t_p)^\top=\frac{\bbV\bbB\bar\bbx}{\|\bbB\bar\bbx\|}$, $\bbV$ is the eigenvector matrix of $\tilde{\mathbb{S}}_n$ and $\boldsymbol{\lambda}=(\lambda_1,\ldots,\lambda_p)^\top$ is the eigenvalue vector of $\tilde{\mathbb{S}}_n$.

Combining Lemma A.5 in Appendix A.1 with the rank inequality (see Bai and Silverstein \cite{Bai 2010}), $\lambda_{\max}(\tilde{\mathbb S})\xrightarrow{P}\|\bbI_p\|=1$ and $\lambda_{\min}(\tilde{\mathbb S})\xrightarrow{P}\lambda_{\min}(\bbI_p)=1$. Thus,
$$|\int f(z)(X_n(z)-\hat{X}_n(z))dz|\leq \frac{C_1\rho_n}{\sqrt p(u_r-1)}+\frac{C_2\rho_n}{\sqrt p(1-u_l)}\xrightarrow{P}0.$$

Secondly, for any real constants $a_1$ and $a_2$,
$$(\hat{X}_n(z),Y_n)\rightarrow a_1\oint f(z)\hat{X}_n(z)dz +a_2Y_n$$
is a continuous mapping. Furthermore,
\begin{eqnarray}
\text{Var}(-\frac{1}{2\pi i}\oint f(z)\hat{X}(z)dz)&=&-\frac{1}{4\pi^2}\oint\oint 2\frac{f(z_1)f(z_2)}{(z_1-1)(z_2-1)}dz_1dz_2=2f^2(1),\nonumber\\
\text{Cov}(-\frac{1}{2\pi i}\oint f(z)\hat{X}(z)dz,Y)&=&-\frac{1}{2\pi i}\oint 2g^\prime(0)\frac{f(z)}{1-z}dz\nonumber\\
&=&\frac{1}{\pi i}\oint g^\prime(0)\frac{f(z)}{z-1}dz=2g^\prime(0)f(1),\nonumber\\
\text{Var}(Y)&=&2(g^\prime(0))^2.\nonumber
\end{eqnarray}
Consequently, the proof of Theorem \ref{th2} is completed.
\end{proof}

\appendix
\addcontentsline{toc}{section}{Appendices}
\section*{Appendices}

\setcounter{equation}{0}
\renewcommand{\theequation}{B.\arabic{equation}}
In the Appendix A.1, Lemmas A.1-A.5 are listed. The proofs of (\ref{mn2a}), tightness of $\hat{M}^{(1)}_n(z)$ and convergence of $M_n^{(2)}(z)$ are presented in the Appendices A.2-A.4, respectively. In the Appendix A.5, some results on truncated random variables are discussed for \eqref{wa4}. The proofs of (\ref{a5c})-(\ref{a11}) are presented in the Appendix A.6. Lastly, the proof of Lemma \ref{lem1} is listed in the Appendix A.7.

\subsection{Some lemmas.}
 \textbf{Lemma A.1} {\it (Burkholder \cite{Burkholder}). Let $\{Y_n, n\geq 1\}$ be a sequence of complex martingale differences with respect to the increasing $\sigma$-field $\mathcal{F}_n$. Then, for any $p\geq 2$ and $n\geq 1$,
$$E\Big|\sum_{i=1}^n Y_i\Big|^p\leq C_1E\Big(\sum_{i=1}^n E(Y_i^2|\mathcal{F}_{i-1})\Big)^{p/2}+C_2\sum_{i=1}^nE|Y_i|^p,$$
where $C_1$ and $C_2$ are positive constants depending only on $p$.}

\textbf{Lemma A.2} {\it
	(Billingsley\cite[Theorem 35.12]{Billingsley 1995}). For each $n$, let $\{Y_{n,1},Y_{n,2},\ldots,Y_{n,r_n}\}$ be a sequence of real martingale difference with respect to the increasing $\sigma$-field $\mathcal{F}_{n,i}$ having finite second moments. If as $n\rightarrow\infty$,
$(i)$: $\sum_{i=1}^{r_n}E(Y_{n,i}^2|\mathcal{F}_{i-1})\xrightarrow{~~P~~}\sigma^2$;
$(ii)$: $\sum_{i=1}^{r_n}E(Y_{n,i}^2I(|Y_{n,i}|\geq \varepsilon))\rightarrow0$,
where $\sigma^2$ is a positive constant and $\varepsilon$ is an arbitrary positive number, then
$$\sum_{i=1}^{r_n}Y_{n,i}\xrightarrow{~d~}N(0,\sigma^2).$$
}

\textbf{Lemma A.3} {\it
	Let $\bbY=(Y_1,\ldots,Y_m)^\top$, where $Y_1, \ldots,Y_m$ are $i.i.d.$ real random variables with $EY_1=0$, $EY_1^2=1$, $EY_1^4=\gamma_1<\infty$. Let $p\leq m$, $\bbA=(a_{ij})_{p\times p}$ and $\bbB=(b_{ij})_{p\times m}$ be a nonrandom matrixs satisfying $\|\bbA\|=O(1)$,
$\bbB\bbB^\top=\boldsymbol{\Sigma}_p$ and $\|\boldsymbol{\Sigma}_p\|=O(1)$, where $\|\bbA\|$ denote the spectral normal of matrix $\bbA$. Denote $\bbB=(b_{ij})_{p\times m}=(\bbb_1,\ldots,\bbb_m)$, where
$\|\bbb_j\|=(\sum\limits_{i=1}^p b_{ij}^2)^{1/2}$, $\bbb_j=(b_{1j},\ldots,b_{pj})^\top,1\leq j\leq m$.
Assume that $|Y_j|\leq (pn)^{1/4}/\|\bbb_j\|$ for all $1\leq j\leq m$.
Then,
\begin{eqnarray}
E|\bbY^\top\bbB^\top\bbA\bbB\bbY-\text{tr}(\bbA\boldsymbol{\Sigma}_p)|^k\leq C_1p^{k/2},~1<k\leq 2,\label{qua1}\\
E|\bbY^\top\bbB^\top\bbA\bbB\bbY-\text{tr}(\bbA\boldsymbol{\Sigma}_p)|^k\leq C_2(p^{k/2+1}+n^{k/2-1}p^{k/2}),~~k>2,\label{qua2}
\end{eqnarray}
where $C_1$ and $C_2$ denote some positive constant depending only on $k$.

In addition, let $\bbZ=(Z_1,\ldots,Z_m)^\top$, where $Z_1, \ldots,Z_m$ are $i.i.d.$ real random variables with $EZ_1=0$, $EZ_1^2=1$, $EZ_1^4=\gamma_2<\infty$. Assume that  $Z_1, \ldots,Z_m$ are independent of $Y_1, \ldots,Y_m$ and $|Z_j|\leq (pn)^{1/4}/\|\bbb_j\|$ for all $1\leq j\leq m$. Then
\begin{eqnarray}
E|\bbY^\top\bbB^\top\bbA\bbB\bbZ|^k\leq C_3p^{k/2},~1<k\leq 2,\label{qub1}\\
E|\bbY^\top\bbB^\top\bbA\bbB\bbZ|^k\leq C_4(p^{k/2+1}+n^{k/2-2}p^{k/2}),\label{qub2}~~k>2,
\end{eqnarray}
where $C_3$ and $C_4$ denote some positive constant depending only on $k$.

}

\textbf{Proof of Lemma A.3}. Denote $\bbH=\bbB^\top\bbA\bbB=(h_{i,j})_{1\leq i,j\leq m}$.
It is used the partition
\begin{eqnarray}
\bbY^\top\bbB^\top\bbA\bbB\bbY-\text{tr}(\bbA\boldsymbol{\Sigma}_p)&=&
\bbY^\top\bbH\bbY-\text{tr}(\bbH)\nonumber\\
&=&\sum_{i=1}^mh_{ii}(Y_i^2-1)+2\sum_{i=1}^m\sum_{j=1}^{i-1}h_{ij}Y_iY_j.\label{qua4}
\end{eqnarray}
Firstly, it is easy to see
\begin{equation}
|h_{ij}|=|\bbe_i^\top\bbB^\top\bbA\bbB\bbe_j|\leq \|\bbA\|\sqrt{\bbe_i^\top\bbB\bbe_i}\sqrt{\bbe_j^\top\bbB\bbe_j}=\|\bbA\|\|\bbb_i\|\|\bbb_j\|,\label{qua5}
\end{equation}
where $\bbe_i$ is a vector with $i$-th element 1 and remaining elements 0. In addition,
\begin{equation}
\sum\limits_{j=1}^m\|\bbb_j\|^2=\sum_{j=1}^m\sum_{i=1}^p b_{ij}^2=\|\bbB\|_{F}^2=\text{tr}(\bbB^\top\bbB)=\text{tr}(\bbB\bbB^\top)\leq p\|\boldsymbol{\Sigma}_p\|,\label{qua6}
\end{equation}
and
\begin{eqnarray}
\|\bbb_j\|^2=\bbb_j^\top\bbb_j=\|\bbb_j\bbb_j^\top\|\leq \|\sum_{j=1}^m\bbb_j\bbb_j^\top\|=\|\boldsymbol{\Sigma}_p\|,1\leq j\leq m.\label{qua7}
\end{eqnarray}

By H\"{o}lder's inequality and Lemma A.1, $\|\bbA\|=\|\boldsymbol{\Sigma}_p\|=O(1)$ and \eqref{qua5}-\eqref{qua7}, we have for $1<k\leq 2$ that
\begin{eqnarray}
&&E\Big|\sum_{i=1}^mh_{ii}(Y_i^2-1)\Big|^k\leq \Big(E\Big|\sum_{i=1}^mh_{ii}(Y_i^2-1)\Big|^2\Big)^{k/2}\nonumber\\
&\leq& C_1\Big(\sum_{i=1}^m|h_{ii}|^2E|Y_i^2-1|^2\Big)^{k/2}\leq C_2\Big(\sum_{i=1}^mEY_{i}^{4}\|\bbb_i\|^{4}\Big)^{k/2}\leq C_3p^{k/2}.~~~~~\label{qua8}
\end{eqnarray}
Meanwhile,
\begin{eqnarray}
&&E\Big|\sum_{i=1}^m\sum_{j=1}^{i-1}h_{ij}Y_iY_j\Big|^k\leq \Big(E\Big|\sum_{i=1}^m\sum_{j=1}^{i-1}h_{ij}Y_iY_j\Big|^2\Big)^{k/2}\nonumber\\
&\leq& C_1\Big(\sum_{i=1}^m\sum_{j=1}^{i-1}h_{ij}^2EY_i^2EY_j^2\Big)^{k/2}\leq C_2(\text{tr}(\bbH\bbH^\top))^{k/2}\leq C_3p^{k/2}.\label{qua9}
\end{eqnarray}
Combining $C_r$ inequality with \eqref{qua4}, \eqref{qua8} and \eqref{qua9}, we obtain that
\begin{equation}
E|\bbY^\top\bbH\bbY-\text{tr}(\bbH)|^k\leq C_1p^{k/2},~~1<k\leq 2.\nonumber
\end{equation}
i.e. \eqref{qua1} is proved.

Next, we prove \eqref{qua2} for the case $2<k\leq 4$. By Lemma A.1 with $2<k\leq 4$,
\begin{eqnarray}
E\Big|\sum_{i=1}^mh_{ii}(Y_i^2-1)\Big|^k&\leq& C_1\Big\{\Big(\sum_{i=1}^m|h_{ii}|^2E|Y_i^2-1|^2\Big)^{k/2}+\sum_{i=1}^m|h_{ii}|^kE|Y_i^2-1|^k\Big\}\nonumber\\
&\leq&C_2\Big\{(\text{tr}(\bbH\bbH^\top))^{k/2}+\sum_{i=1}^mEY_{i}^{2k}\|\bbb_i\|^{2k}\Big\}\nonumber\\
&\leq &C_3(p^{k/2}+(np)^{k/2-1}\sum_{i=1}^m\|\bbb_i\|^{4})\nonumber\\
&\leq& C_4(p^{k/2}+n^{k/2-1}p^{k/2}).\label{qua10}
\end{eqnarray}
Let $E_i(\cdot)$ be the conditional expectation with respect to $\mathcal{F}_i=\sigma(Y_1,\ldots,Y_i)$ and $\mathcal{F}_0=\sigma(\emptyset,\Omega)$. Then, by Lemma A.1 with $2<k\leq 4$,
\begin{eqnarray}
&&E\Big|\sum_{i=1}^m\sum_{j=1}^{i-1}h_{ij}Y_iY_j\Big|^k\nonumber\\
&\leq&C_1\Big\{E\Big(\sum_{i=1}^mE_{i-1}\Big|\sum_{j=1}^{i-1}h_{ij}Y_iY_j\Big|^2\Big)^{k/2}+ \sum_{i=1}^mE\Big|\sum_{j=1}^{i-1}h_{ij}Y_iY_j\Big|^k\Big\}\nonumber\\
&=&C_1\Big\{E\Big(\sum_{i=1}^mEY_i^2\Big|\sum_{j=1}^{i-1}h_{ij}Y_j\Big|^2\Big)^{k/2}+ \sum_{i=1}^mE\Big|\sum_{j=1}^{i-1}h_{ij}Y_iY_j\Big|^k\Big\}\nonumber\\
&\leq& C_1E\Big(\sum_{i=1}^m\Big|E_{i-1}\sum_{j=1}^{m}h_{ij}Y_j\Big|^2\Big)^{k/2}
+C_2\sum_{i=1}^m\Big(\sum_{j=1}^{i-1}|h_{ij}|^2EY_i^2EY_j^2\Big)^{k/2}\nonumber\\
&&+C_3\sum_{i=1}^m\sum_{j=1}^{i-1}|h_{ij}|^{k}E|Y_i|^kE|Y_j|^k\nonumber\\
&\leq& C_1E\Big(\sum_{i=1}^m\Big|\sum_{j=1}^{m}h_{ij}Y_j\Big|^2\Big)^{k/2}
+C_2\sum_{i=1}^m\|\bbb_i\|^k\Big(\sum_{j=1}^{m}\|\bbb_j\|^2\Big)^{k/2}\nonumber\\
&&+C_3\sum_{i=1}^m\|\bbb_i\|^kE|Y_i|^k\sum_{j=1}^{m}\|\bbb_j\|^kE|Y_j|^k\nonumber\\
&\leq&C_4\Big(E|\bbY^\top\bbH\bbH^\top\bbY-\text{tr}(\bbH\bbH^\top)|^{k/2}+ (\text{tr}(\bbH\bbH^\top))^{k/2}+p^{k/2+1}+p^2\Big)\nonumber\\
&\leq&C_5p^{k/2+1},\label{qua11}
\end{eqnarray}
where we use inequality \eqref{qua1} with $\bbH$ replaced by $\bbH\bbH^\top$ to obtain
$$E|\bbY^\top\bbH\bbH^\top\bbY-\text{tr}(\bbH\bbH^\top)|^{k/2}\leq Cp^{k/4}.$$
Consequently, by \eqref{qua10} and \eqref{qua11},
\begin{equation}
E|\bbY^\top\bbH\bbY-\text{tr}(\bbH)|^k\leq C_1(p^{k/2+1}+n^{k/2-1}p^{k/2}),\label{qua12}
\end{equation}
i.e. \eqref{qua2} holds for $2<k\leq 4$.

Now, we proceed the proof of \eqref{qua2} by induction on $k>4$. Assume that \eqref{qua2} is true for $2^{t}<k\leq 2^{t+1}$ with $t\geq 2$. Then, we consider the case $2^{t+1}<k\leq 2^{t+2}$ with $t\geq 2$.
By the proof of \eqref{qua10},
\begin{eqnarray}
E\Big|\sum_{i=1}^mh_{ii}(Y_i^2-1)\Big|^k&\leq& C_1\Big\{(\text{tr}(\bbH\bbH^\top))^{k/2}+\sum_{i=1}^mEY_{i}^{2k}\|\bbb_i\|^{2k}\Big\}\nonumber\\
&\leq& C_2(p^{k/2}+n^{k/2-1}p^{k/2})\label{qua13}
\end{eqnarray}
and by the proof of \eqref{qua11},
\begin{eqnarray}
&&E\Big|\sum_{i=1}^m\sum_{j=1}^{i-1}h_{ij}Y_iY_j\Big|^k\nonumber\\
&\leq& C_1E\Big(\sum_{i=1}^m\Big|\sum_{j=1}^{m}h_{ij}Y_j\Big|^2\Big)^{k/2}
+C_2\sum_{i=1}^m\|\bbb_i\|^k\Big(\sum_{j=1}^{m}\|\bbb_j\|^2\Big)^{k/2}\nonumber\\
&&+C_3\sum_{i=1}^m\|\bbb_i\|^kE|Y_i|^k\sum_{j=1}^{m}\|\bbb_j\|^kE|Y_j|^k\nonumber\\
&\leq&C_4\Big(E|\bbY^\top\bbH\bbH^\top\bbY-\text{tr}(\bbH\bbH^\top)|^{k/2}+ (\text{tr}(\bbH\bbH^\top))^{k/2}\Big)\nonumber\\
&&+C_5(p^{k/2+1}+(np)^{k/2-2}p^2).\label{qua14}
\end{eqnarray}
Using the induction hypothesis with $\bbH$ replaced $\bbH\bbH^\top$, we have for $2^{t+1}<k\leq 2^{t+2}$ that
\begin{equation}
E|\bbY^\top\bbH\bbH^\top\bbY-\text{tr}(\bbH\bbH^\top)|^{k/2}\leq C_3(n^{k/4-1}p^{k/4}+p^{k/4+1}).\label{qua15}
\end{equation}
From \eqref{qua13} to \eqref{qua15}, for $2^{t+1}<k\leq 2^{t+2}$ and $t\geq 2$,
\begin{equation}
E|\bbY^\top\bbH\bbY-\text{tr}(\bbH)|^k\leq C_1(p^{k/2+1}+n^{k/2-1}p^{k/2}).\nonumber
\end{equation}
Thus, the proof of \eqref{qua2} is completed for $k>2$.

To prove \eqref{qub1} and \eqref{qub2}, we also use the same notion above.
It is easy to see that
\begin{eqnarray}
\bbY^\top\bbB^\top\bbA\bbB\bbZ&=&\bbY^\top\bbH\bbZ=\sum_{i=1}^mh_{ii}Y_iZ_i+2\sum_{i=1}^m\sum_{j=1}^{i-1}h_{ij}Y_iZ_j.\label{qub4}
\end{eqnarray}

Similarly to \eqref{qua8}, for $1<k\leq 2$,
\begin{eqnarray}
&&E\Big|\sum_{i=1}^mh_{ii}Y_iZ_i\Big|^k\leq \Big(E\Big|\sum_{i=1}^mh_{ii}Y_iZ_i\Big|^2\Big)^{k/2}\nonumber\\
&\leq& C_1\Big(\sum_{i=1}^m|h_{ii}|^2EY_i^2EZ_i^2\Big)^{k/2}\leq C_2\Big(\sum_{i=1}^m\|\bbb_i\|^{4}\Big)^{k/2}\leq C_3p^{k/2} \label{qub8}
\end{eqnarray}
and
\begin{eqnarray}
&&E\Big|\sum_{i=1}^m\sum_{j=1}^{i-1}h_{ij}Y_iZ_j\Big|^k\leq \Big(E\Big|\sum_{i=1}^m\sum_{j=1}^{i-1}h_{ij}Y_iZ_j\Big|^2\Big)^{k/2}\nonumber\\
&\leq& C_1\Big(\sum_{i=1}^m\sum_{j=1}^{i-1}h_{ij}^2EY_i^2EZ_j^2\Big)^{k/2}\leq C_2(\text{tr}(\bbH\bbH^\top))^{k/2}\leq C_3p^{k/2}.\label{qub9}
\end{eqnarray}
Combining with \eqref{qub4}, we have
\begin{equation}
E|\bbY^\top\bbH\bbZ|^k\leq C_1p^{k/2},~~1<k\leq 2.\nonumber
\end{equation}
i.e. \eqref{qub1} is proved.

Now, we prove \eqref{qub2} for the case $2<k\leq 4$. Similarly to \eqref{qua10},
\begin{eqnarray}
E\Big|\sum_{i=1}^mh_{ii}Y_iZ_i\Big|^k&\leq& C_1\Big\{\Big(\sum_{i=1}^m|h_{ii}|^2EY_i^2EZ_i^2\Big)^{k/2}+\sum_{i=1}^m|h_{ii}|^kE|Y_i|^kE|Z_i|^k\Big\}\nonumber\\
&\leq&C_2\Big\{(\text{tr}(\bbH\bbH^\top))^{k/2}+\sum_{i=1}^m\|\bbb_i\|^{2k}\Big\}\nonumber\\
&\leq &C_3(p^{k/2}+p)\leq C_4p^{k/2}.\label{qub10}
\end{eqnarray}
Let $E_i(\cdot)$ be the conditional expectation with respect to $\mathcal{F}_i=\sigma(Y_1,Z_1,\ldots,Y_i,Z_i)$ and $\mathcal{F}_0=\sigma(\emptyset,\Omega)$. Then, similar to \eqref{qua11},
\begin{eqnarray}
&&E\Big|\sum_{i=1}^m\sum_{j=1}^{i-1}h_{ij}Y_iZ_j\Big|^k\nonumber\\
&\leq&C_1\Big\{E\Big(\sum_{i=1}^mE_{i-1}\Big|\sum_{j=1}^{i-1}h_{ij}Y_iZ_j\Big|^2\Big)^{k/2}+ \sum_{i=1}^mE\Big|\sum_{j=1}^{i-1}h_{ij}Y_iZ_j\Big|^k\Big\}\nonumber\\
&=&C_1\Big\{E\Big(\sum_{i=1}^mEY_i^2\Big|\sum_{j=1}^{i-1}h_{ij}Z_j\Big|^2\Big)^{k/2}+ \sum_{i=1}^mE\Big|\sum_{j=1}^{i-1}h_{ij}Y_iZ_j\Big|^k\Big\}\nonumber\\
&\leq& C_1E\Big(\sum_{i=1}^m\Big|E_{i-1}\sum_{j=1}^{m}h_{ij}Z_j\Big|^2\Big)^{k/2}
+C_2\sum_{i=1}^m\Big(\sum_{j=1}^{i-1}|h_{ij}|^2EY_i^2EZ_j^2\Big)^{k/2}\nonumber\\
&&+C_3\sum_{i=1}^m\sum_{j=1}^{i-1}|h_{ij}|^{k}E|Y_i|^kE|Z_j|^k\nonumber\\
&\leq& C_1E\Big(\sum_{i=1}^m\Big|\sum_{j=1}^{m}h_{ij}Z_j\Big|^2\Big)^{k/2}
+C_2\sum_{i=1}^m\|\bbb_i\|^k\Big(\sum_{j=1}^{m}\|\bbb_j\|^2\Big)^{k/2}\nonumber\\
&&+C_3\sum_{i=1}^m\|\bbb_i\|^kE|Y_i|^k\sum_{j=1}^{m}\|\bbb_j\|^kE|Z_j|^k\nonumber\\
&\leq&C_4\Big(E|\bbZ^\top\bbH\bbH^\top\bbZ-\text{tr}(\bbH\bbH^\top)|^{k/2}+ (\text{tr}(\bbH\bbH^\top))^{k/2}+p^{k/2+1}+p^2\Big)\nonumber\\
&\leq&C_5p^{k/2+1},\label{qub11}
\end{eqnarray}
where we use inequality \eqref{qua1} with $\bbY$ replaced by $\bbZ$ and $\bbH$ replaced by $\bbH\bbH^\top$ to obtain
$$E|\bbZ^\top\bbH\bbH^\top\bbZ-\text{tr}(\bbH\bbH^\top)|^{k/2}\leq Cp^{k/4}.$$
Consequently, by \eqref{qub10} and \eqref{qub11},
\begin{equation}
E|\bbY^\top\bbH\bbZ|^k\leq C_1p^{k/2+1},\label{qub12}
\end{equation}
i.e. \eqref{qub2} holds for $2<k\leq 4$.

In the following, we prove \eqref{qub2} for $k>4$. By the proof of \eqref{qub10}, for $k>4$,
\begin{eqnarray}
E\Big|\sum_{i=1}^mh_{ii}Y_iZ_i\Big|^k&\leq& C_1\Big\{\Big(\sum_{i=1}^m|h_{ii}|^2EY_i^2EZ_i^2\Big)^{k/2}+\sum_{i=1}^m|h_{ii}|^kE|Y_i|^kE|Z_i|^k\Big\}\nonumber\\
&\leq& C_1\Big\{(\text{tr}(\bbH\bbH^\top))^{k/2}+\sum_{i=1}^mE|Y_i|^{k}E|Z_i|^k\|\bbb_i\|^{2k}\Big\}\nonumber\\
&\leq& C_1\Big\{(\text{tr}(\bbH\bbH^\top))^{k/2}+(np)^{k/2-2}\sum_{i=1}^m\|\bbb_i\|^{8}\Big\}\nonumber\\
&\leq& C_2(p^{k/2}+n^{k/2-2}p^{k/2-1})\label{qub13}
\end{eqnarray}
and by the proof of \eqref{qub11},
\begin{eqnarray}
&&E\Big|\sum_{i=1}^m\sum_{j=1}^{i-1}h_{ij}Y_iZ_j\Big|^k\nonumber\\
&\leq& C_1E\Big(\sum_{i=1}^m\Big|\sum_{j=1}^{m}h_{ij}Z_j\Big|^2\Big)^{k/2}
+C_2\sum_{i=1}^m\|\bbb_i\|^k\Big(\sum_{j=1}^{m}\|\bbb_j\|^2\Big)^{k/2}\nonumber\\
&&+C_3\sum_{i=1}^m\|\bbb_i\|^kE|Y_i|^k\sum_{j=1}^{m}\|\bbb_j\|^kE|Z_j|^k\nonumber\\
&\leq&C_4\Big(E|\bbZ^\top\bbH\bbH^\top\bbZ-\text{tr}(\bbH\bbH^\top)|^{k/2}+ (\text{tr}(\bbH\bbH^\top))^{k/2}\Big)\nonumber\\
&&+C_5(p^{k/2+1}+(np)^{k/2-2}p^2).\label{qua14}
\end{eqnarray}
By using \eqref{qua2} with $k>4$, we have
\begin{equation}
E|\bbZ^\top\bbH\bbH^\top\bbZ-\text{tr}(\bbH\bbH^\top)|^{k/2}\leq C_3(p^{k/4+1}+n^{k/4-1}p^{k/4}).\label{qub15}
\end{equation}
From \eqref{qub13} to \eqref{qub15}, for $k>4$,
\begin{equation}
E|\bbY^\top\bbH\bbZ|^k\leq C_1(p^{k/2+1}+n^{k/2-2}p^{k/2}).\label{qub16}
\end{equation}
Combining \eqref{qub12} and \eqref{qub16}, we obtain \eqref{qub2} $k>2$. ~~~~\qed

\textbf{Lemma A.4} {\it
	(Vershynin\cite[Theorem 5.44]{Vershynin 2011})  Let $\bbA$ be an $N\times n$ matrix whose row $A_i$ are independent random vectors in $\mathbb{R}^n$ with $\boldsymbol{\Sigma}=EA_i^\top A_i$, $1\leq i\leq N$. Let $m$ be a number such that $\|A_i\|=\sqrt{A_iA_i^\top}\leq \sqrt{m}$ almost surely for all $i$. Then for some positive constant $c>0$ and every $t>0$, the following inequality holds with probability at least $1-n\exp(-ct^2)$:
\begin{equation}
\lambda_{\max}(\frac{1}{N}\bbA^\top\bbA-\boldsymbol{\Sigma})=\|\frac{1}{N}\bbA^\top\bbA-\boldsymbol{\Sigma}\|\leq \max(\|\boldsymbol{\Sigma}\|^{1/2}\delta,\delta^2),\nonumber
\end{equation}
where $\delta=t\sqrt{\frac{m}{N}}$. }

\textbf{Lemma A.5} {\it Let $\bbX_n=(\bbx_1,\ldots,\bbx_n)=(X_{ij})$ be an $m\times n$ matrix whose entries are $i.i.d.$ real random variables.
	 Let $\tilde{\bbS}_n=\frac{1}{n}\bbB\bbX_{n}\bbX_{n}^{\top}\bbB^\top$,
where $\bbB=\boldsymbol{\Sigma}_p^{-\frac{1}{2}}
\bbU\boldsymbol{\Gamma}=(b_{ij})_{p\times m}=(\bbb_1,\ldots,\bbb_m)$ is defined by  {(5.1)}. In addition, assume that $EX_{11}=0$, $EX_{11}^2=1$, $EX_{11}^4<\infty$, $|X_{ij}|\leq (np)^{1/4}/\|\bbb_i\|$ for all $1\leq i\leq m$ and $1\leq j\leq n$.
Let $p/n=O(n^{-\eta})$ for some $0<\eta<1$.
Then for any $k>0$ and $\varepsilon>0$, we have
\begin{equation}
P(\lambda_{\max}(\tilde{\bbS}_n)>1+\varepsilon)=o(n^{-k})~~~\text{and}~~~P(\lambda_{\min}(\tilde{\bbS}_n)<1-\varepsilon)=o(n^{-k}).\label{kk1}
\end{equation}
}

\textbf{Proof of Lemma A.5}. Let $\bbA=\bbX_n^\top\bbB^\top$ and take $N=n$, $n=p$ in Lemma A.4. It is easy to check that
$A_i=\bbx_i^\top\bbB^\top$ and $EA_i^\top A_i=\bbB E(\bbx_i\bbx_i^\top)\bbB^\top=\bbB\bbB^\top=\bbI_p$. In addition, for any $s>2$, by Markov inequality and \eqref{qua2}, one has
\begin{eqnarray}
&&P\Big(\max_{1\leq i\leq n}|\bbx_i^\top\bbB^\top\bbB\bbx_i-\text{tr}(\bbB^\top\bbB)|\geq \frac{n}{\log ^{3}n}\Big)\nonumber\\
&\leq&\sum_{i=1}^nP\Big(|\bbx_i^\top\bbB^\top\bbB\bbx_i-\text{tr}(\bbB^\top\bbB)|\geq \frac{n}{\log ^{3}n}\Big)\nonumber\\
&=&nP\Big(|\bbx_1^\top\bbB^\top\bbB\bbx_1-\text{tr}(\bbB^\top\bbB)|\geq  \frac{n}{\log ^{3}n}\Big)\nonumber\\
&\leq&\frac{\log^{3s} nE|\bbx_1^\top\bbB^\top\bbB\bbx_1-\text{tr}(\bbB^\top\bbB)|^s}{n^{s-1}}\nonumber\\
&\leq&C_1\frac{p^{s/2+1}}{n^{s-1}}\log^{3s}n +C_2\frac{p^{s/2}}{n^{s/2}}\log^{3s}n .\label{quac1}
\end{eqnarray}
Since $p/n=O(n^{-\eta})$ for some $0<\eta<1$, we take some large $s$ in \eqref{quac1} and obtain that
$$P\Big(\max_{1\leq i\leq n}|\bbx_i^\top\bbB^\top\bbB\bbx_i-\text{tr}(\bbB^\top\bbB)|\geq \frac{n}{\log ^{3}n}\Big)=O(n^{-2}).$$
Consequently, it follows from Borel-Cantelli's Lemma that
\begin{eqnarray}
\max_{1\leq i\leq n}(\bbx_i^\top\bbB^\top\bbB\bbx_i) &=& \max_{1\leq i\leq n}(A_iA_i^\top)\leq \text{tr}(\bbB^\top\bbB)+\frac{n}{\log^{3}n}\nonumber\\
&=&\text{tr}(\bbI_p)+\frac{n}{\log^{3}n}\leq p+\frac{n}{\log^{3}n}\leq 2\frac{n}{\log^{3}n},~~a.s..\nonumber
\end{eqnarray}
In other words,
\begin{eqnarray}
\max_{1\leq i\leq n}\|A_i\|=\sqrt{\max_{1\leq i\leq n}A_iA_i^\top}\leq \sqrt{\frac{2n}{\log^{3}n}},~~a.s.\nonumber
\end{eqnarray}
For all $t>0$, we apply Lemma A.4 with $m=\frac{2n}{\log^{3}n}$ and obtain that
\begin{eqnarray}
\lambda_{\max}(\tilde{\bbS}_n-\bbI_p)=\|\tilde{\bbS}_n-\bbI_p\|&\leq& t^2\frac{2n}{n\log^{3}n}+\|\bbI_p\|^{1/2}t\sqrt{\frac{2n}{n\log^{3}n}}\nonumber\\
&=&t^2\frac{2}{\log^{3}n}+t\sqrt{\frac{2}{\log^{3}n}},\label{pp1}
\end{eqnarray}
with probability at least $1-p\exp(-ct^2)$, where $c$ is some positive constant. Since $\lambda_{\max}(\tilde{\bbS}_n)=\|\tilde{\bbS}_n\|=\|\tilde{\bbS}_n\|-\|\bbI_p\|+\|\bbI_p\|$, $\||\tilde{\bbS}_n\|-\|\bbI_p\||\leq \|\tilde{\bbS}_n-\bbI_p\|$, we take $t=\log n$ in (\ref{pp1}) and obtain that for any $k>0$,
\begin{equation}
\|\tilde{\bbS}_n\|=\|\bbI_p\|\label{pp2}
\end{equation}
with probability at least $1-o(n^{-k})$.
In addition, combining
\begin{eqnarray}
|\lambda_{\min}(\tilde{\bbS}_n)-\lambda_{\min}(\bbI_p)|\leq \|\tilde{\bbS}_n-\bbI_p\|,\nonumber
\end{eqnarray}
with (\ref{pp1}), we also
obtain that for any $k>0$,
\begin{equation}
\lambda_{\min}(\tilde{\bbS}_n)=\lambda_{\min}(\bbI_p)\label{pp3}
\end{equation}
with probability at least $1-o(n^{-k})$. So \eqref{kk1} immediately follows from \eqref{pp2} and \eqref{pp3}.\qed

\subsection{The limit of  \eqref{mn2a}}
In order to show the limit of
\begin{eqnarray}
&&\sum_{j=1}^nE_{j-1}(Y_j(z_1)Y_j(z_2))\nonumber\\
&=&\frac{4}{p}\sum_{j=1}^n E_{j-1}[E_j(\bbx_j^\top\bbB^\top\bbA_{nj}^{-1}(z_1)\bbB\bar\bbx_j)E_j(\bar\bbx_j^\top\bbB^\top\bbA_{nj}^{-1}(z_2)\bbB\bbx_j)]\label{mn2}
\end{eqnarray}
in probability for $z_1,z_2\in \mathbb{C}\backslash \mathbb{R}$, we proceed by two steps.

\textbf{First step}, we aim to show that
\begin{eqnarray}
\text{RHS of}~(\ref{mn2})&=&\frac{4}{p}\sum_{j=1}^n\frac{j-1}{n^2}\text{tr}[E_j(\bbA_{nj}^{-1}(z_2)) E_j(\bbA_{nj}^{-1}(z_1))]\nonumber\\
&&+O_P(\frac{1}{\sqrt{p}})+O_P(\frac{p^{1/2}}{n^{1/2}}).\label{c1}
\end{eqnarray}
Similarly to $\bbA_{nj}^{-1}(z)$ and $\bar\bbx_j$, we define $\underline{\bbA}_{nj}^{-1}(z)$ and $\underline{\bar\bbx}_j$ by  $\bbx_1,\bbx_2,\ldots,\bbx_{j-1},\underline{\bbx}_{j+1},\ldots,\underline{\bbx}_{n}$, respectively, where $\{\underline{\bbx}_{j+1},\ldots,\underline{\bbx}_{n}\}$ are $i.i.d.$ copies of $\bbx_{j+1},\ldots,\bbx_{n}$ and independent of $\{\bbx_j,1\leq j\leq n\}$.
Since $\bbB\bbB^\top=\bbI_p$ and
\begin{eqnarray}
&&E_{j-1}[E_j(\bbx_j^\top\bbB^\top\bbA_{nj}^{-1}(z_1)\bbB\bar\bbx_j)E_j(\bar\bbx_j^\top\bbB^\top\bbA_{nj}^{-1}(z_2)\bbB\bbx_j)]
\nonumber\\
&=&E_j(\bar\bbx_j^\top\bbB^\top\bbA_{nj}^{-1}(z_2))E_j(\bbA_{nj}^{-1}(z_1)\bbB\bar\bbx_j),~1\leq j\leq n,\nonumber
\end{eqnarray}
\begin{eqnarray}
\text{RHS of}~(\ref{mn2})&=&\frac{4}{p}\sum_{j=1}^nE_j(\bar\bbx_j^\top\bbB^\top\bbA_{nj}^{-1}(z_2))E_j(\bbA_{nj}^{-1}(z_1)\bbB\bar\bbx_j)\nonumber\\
&=&\frac{4}{p}\sum_{j=1}^nE_{j}[\bar\bbx_j^\top\bbB^\top\bbA_{nj}^{-1}(z_2)\underline{\bbA}_{nj}^{-1}(z_1)\bbB\underline{\bar\bbx}_j)].\label{c2}
\end{eqnarray}
In view of $\bar\bbx_j=\frac{1}{n}\sum_{i\neq j}^n\bbx_i$,
\begin{eqnarray}
&&E_j[\bar\bbx_j^\top\bbB^\top\bbA_{nj}^{-1}(z_2)\underline{\bbA}_{nj}^{-1}(z_1)\bbB\underline{\bar\bbx}_j]
\nonumber\\
&=&\frac{1}{n}\sum_{i\neq j}^nE_j[\beta_{ij}(z_2)\bbx_i^\top\bbB^\top\bbA_{nij}^{-1}(z_2)\underline{\bbA}_{nj}^{-1}(z_1)\bbB\underline{\bar\bbx}_j].\label{c3}
\end{eqnarray}
We will show that $\beta_{ij}(z_2)$ in the equality above can be replaced by $\beta_{ij}^{\text{tr}}(z_2)$. First, we consider the case of $i>j$. Applying  \eqref{a11}, we have
\begin{equation}
E|E_j[(\beta_{ij}(z_2)-\beta_{ij}^{\text{tr}}(z_2))\bbx_i^\top\bbB^\top\bbA_{nij}^{-1}(z_2)\underline{\bbA}_{nj}^{-1}(z_1)\bbB\underline{\bar\bbx}_j]|=O(\frac{\sqrt{p}}{n})\label{c4}.
\end{equation}
When $i<j$, we break $\underline{\bbA}_{nj}^{-1}(z_1)$ into the sum of $\underline{\bbA}_{nij}^{-1}(z_1)$ and $\underline{\bbA}_{nj}^{-1}(z_1)-\underline{\bbA}_{nij}^{-1}(z_1)$, $\underline{\bar\bbx}_j$ into the sum of $\underline{\bar\bbx}_{ij}$ and $\underline{\bar \bbx}_{j}-\underline{\bar\bbx}_{ij}$, respectively,  where $\underline{\bbA}_{nij}(z_1)=\underline{\bbA}_{nj}(z_1)-n^{-1}\bbB\bbx_i\bbx_i^\top\bbB^\top$ and $\underline{\bar\bbx}_{ij}=\underline{\bar\bbx}_j-\bbx_i/n$. Denote
$$\underline{\beta}_{ij}(z)=\frac{1}{1+n^{-1}\bbx_i^\top\bbB^\top\underline{\bbA}_{ij}^{-1}(z)\bbB\bbx_i}.$$
Under the case of $i<j$, write
\begin{equation}
E_j[(\beta_{ij}(z_2)-\beta_{ij}^{\text{tr}}(z_2))\bbx_i^\top\bbB^\top\bbA_{nij}^{-1}(z_2)\underline{\bbA}_j^{-1}(z_1)\bbB\underline{\bar\bbx}_j]=c_{nk},~~k=1,2,3,4,\label{c5}
\end{equation}
where
$$c_{n1}=E_j[(\beta_{ij}(z_2)-\beta_{ij}^{\text{tr}}(z_2))\bbx_i^\top\bbB^\top\bbA_{nij}^{-1}(z_2)\underline{\bbA}_{nij}^{-1}(z_1)\bbB\underline{\bar\bbx}_{ij}],$$
$$c_{n2}=\frac{1}{n}E_j[(\beta_{ij}(z_2)-\beta_{ij}^{\text{tr}}(z_2))\bbx_i^\top\bbB^\top\bbA_{nij}^{-1}(z_2)\underline{\bbA}_{nij}^{-1}(z_1)\bbB\bbx_{i}],$$
$$c_{n3}=-\frac{1}{n}E_j[(\beta_{ij}(z_2)-\beta_{ij}^{\text{tr}}(z_2))\bbx_i^\top\bbB^\top\bbA_{nij}^{-1}(z_2)\underline{\bbA}_{nij}^{-1}(z_1)\bbB\bbx_i\bbx_i^\top\bbB^\top\underline{\bbA}_{ij}^{-1}(z_1)\underline{\beta}_{ij}(z_1)\bbB\underline{\bar\bbx}_{ij}],$$
$$c_{n4}=-\frac{1}{n^2}E_j[(\beta_{ij}(z_2)-\beta_{ij}^{\text{tr}}(z_2))\bbx_i^\top\bbB^\top\bbA_{nij}^{-1}(z_2)\underline{\bbA}_{nij}^{-1}(z_1)\bbB\bbx_i\bbx_i^\top\bbB^\top\underline{\bbA}_{ij}^{-1}(z_1)\underline{\beta}_{ij}(z_1)\bbB\bbx_{i}].$$
In view of $\bbB\bbB^\top=\bbI_p$, $$\beta_{ij}(z_2)-\beta_{ij}^{\text{tr}}(z_2)=-\beta_{ij}(z_2)\beta_{ij}^{\text{tr}}(z_2)(n^{-1}\bbx_i^\top\bbB^\top\bbA_{nij}^{-1}(z_2)\bbB\bbx_i-n^{-1}\text{tr}(\bbA_{nij}^{-1}(z_2))),$$
and \eqref{a11},
$$E|c_{nk}|\leq C_1\frac{\sqrt{p}}{n}, k=1,2,3,4,$$ which yields
$$c_{nk}=O_P(\frac{\sqrt{p}}{n}),k=1,2,3,4.$$
So, $\beta_{ij}(z_2)$ can be replaced by $\beta_{ij}^{\text{tr}}(z_2)$ in probability.

Note that
$E_j[\beta^{\text{tr}}_{ij}(z_2)\bbx_i^\top\bbB^\top\bbA_{nij}^{-1}(z_2)\underline{\bbA}_{nj}^{-1}(z_1)\bbB\bar{\underline{\bbx}}_j]=0$ when $i>j$. Combining (\ref{c4}) and (\ref{c5}), we obtain that
\begin{eqnarray}
&&E_j[\bar\bbx_j^\top\bbB^\top\bbA_{nj}^{-1}(z_2)\underline{\bbA}_{nj}^{-1}(z_1)\bbB\underline{\bar\bbx}_j]\nonumber\\
&=&\frac{1}{n}\sum_{i\neq j}^nE_j[\beta_{ij}^{\text{tr}}(z_2)\bbx_i^\top\bbB^\top\bbA_{nij}^{-1}(z_2)\underline{\bbA}_{nj}^{-1}(z_1)\bbB\underline{\bar\bbx}_j]+O_P(\frac{\sqrt{p}}{n})\nonumber\\
&=&\frac{1}{n}\sum_{i<j}E_j[\beta_{ij}^{\text{tr}}(z_2)\bbx_i^\top\bbB^\top\bbA_{nij}^{-1}(z_2)\underline{\bbA}_{nj}^{-1}(z_1)\bbB\underline{\bar\bbx}_j]+O_P(\frac{\sqrt{p}}{n})\nonumber\\
&=&d_{n1}+d_{n2}+d_{n3}+d_{n4}+O_P(\frac{\sqrt{p}}{n}),\label{c6}
\end{eqnarray}
where
$$d_{n1}=\frac{1}{n^2}\sum_{i<j}E_j[\beta_{ij}^{\text{tr}}(z_2)\bbx_i^\top\bbB^\top\bbA_{nij}^{-1}(z_2)\underline{\bbA}_{nij}^{-1}(z_1)\bbB\bbx_i\underline{\beta}_{ij}(z_1)],$$
$$d_{n2}=\frac{1}{n}\sum_{i<j}E_j[\beta_{ij}^{\text{tr}}(z_2)\bbx_i^\top\bbB^\top\bbA_{nij}^{-1}(z_2)\underline{\bbA}_{nij}^{-1}(z_1)\bbB\bar{\underline{\bbx}}_{ij}],$$
$$d_{n3}=-\frac{1}{n^2}\sum_{i<j}E_j[\beta_{ij}^{\text{tr}}(z_2)\bbx_i^\top\bbB^\top\bbA_{nij}^{-1}(z_2)
\underline{\bbA}_{nij}^{-1}(z_1)\bbB\bbx_i\bbx_i^\top\bbB^\top\underline{\bbA}_{nij}^{-1}(z_1)\bbB\underline{\bar\bbx}_{ij}\underline{\beta}_{ij}(z_1)],$$
and
$$d_{n4}=-\frac{1}{n^3}\sum_{i<j}E_j[\beta_{ij}^{\text{tr}}(z_2)\bbx_i^\top\bbB^\top\bbA_{nij}^{-1}(z_2)
\underline{\bbA}_{nij}^{-1}(z_1)\bbB\bbx_i\bbx_i^\top\bbB^\top\underline{\bbA}_{nij}^{-1}(z_1)\bbB\bbx_{i}\underline{\beta}_{ij}(z_1)].$$
We next show that the terms of $d_{n2}$, $d_{n3}$ and $d_{n4}$ are negligible. For $d_{n2}$,
\begin{eqnarray}
d_{n2}&=&\frac{1}{n}\sum_{i<j}E_j[\beta_{ij}^{\text{tr}}(z_2)\bbx_i^\top\bbB^\top\bbA_{nij}^{-1}(z_2)
\underline{\bbA}_{nij}^{-1}(z_1)\bbB\bar{\underline{\bbx}}_{ij}]\nonumber\\
&=&\frac{1}{n}\sum_{i<j}(E_j-E_{j-1})[\beta_{ij}^{\text{tr}}(z_2)\bbx_i^\top\bbA_{nij}^{-1}(z_2)\underline{\bbA}_{nij}^{-1}(z_1)\bbB\bar{\underline{\bbx}}_{ij}].\nonumber
\end{eqnarray}
By \eqref{a12},
\begin{eqnarray}
Ed_{n2}^2&=&\frac{1}{n^2}\sum_{i<j}E|(E_j-E_{j-1})[\beta_{ij}^{\text{tr}}(z_2)\bbx_i^\top\bbB^\top\bbA_{nij}^{-1}(z_2)
\underline{\bbA}_{nij}^{-1}(z_1)\bbB\bar{\underline{\bbx}}_{ij}]|^2\nonumber\\
&\leq&C_1\frac{p}{n^2}.\nonumber
\end{eqnarray}
Thus,
\begin{equation}
d_{n2}=O_P(\frac{\sqrt{p}}{n}).\label{mm1}
\end{equation}

Now, we consider $d_{n3}$ and $d_{n4}$.
By \eqref{a12} and \eqref{a11}, we have
\begin{eqnarray}
d_{n3}&=&-\frac{1}{n}\sum_{i<j}E_j[\beta_{ij}^{\text{tr}}(z_2)C_{nij}(z_1,z_2)\bbx_i^\top\bbB^\top\underline{\bbA}_{nij}^{-1}(z_1)\bbB\underline{\bar\bbx}_{ij}\underline{\beta}_{ij}(z_1)]\nonumber\\
&=&-\frac{1}{n}\sum_{i<j}E_j[\beta_{ij}^{\text{tr}}(z_2)[C_{nij}(z_1,z_2)-D_{nij}(z_1,z_2)]\bbx_i^\top\bbB^\top\underline{\bbA}_{nij}^{-1}(z_1)\bbB\underline{\bar\bbx}_{ij}\underline{\beta}_{ij}(z_1)]\nonumber\\
&&-\frac{1}{n}\sum_{i<j}E_j[\beta_{ij}^{\text{tr}}(z_2)D_{nij}(z_1,z_2)\bbx_i^\top\bbB^\top\underline{\bbA}_{nij}^{-1}(z_1)\bbB\underline{\bar\bbx}_{ij}\underline{\beta}_{ij}(z_1)]\nonumber\\
&=&O_P(\frac{\sqrt{p}}{n})+O(\frac{p^{3/2}}{n^{3/2}}),\nonumber
\end{eqnarray}
where $$C_{nij}(z_1,z_2)=\frac{1}{n}\bbx_i^\top\bbB^\top\bbA_{nij}^{-1}(z_2)
\underline{\bbA}_{nij}^{-1}(z_1)\bbB\bbx_i,D_{nij}(z_1,z_2)=\frac{1}{n}\text{tr}(\bbA_{nij}^{-1}(z_2)
\underline{\bbA}_{nij}^{-1}(z_1)).$$
By  {\eqref{a5c} and \eqref{wa3}}, we have
\begin{eqnarray}
d_{n4}&=&-\frac{1}{n}\sum_{i<j}E_j\{\beta_{ij}^{\text{tr}}(z_2)\underline{\beta}_{ij}(z_1)F_{nij}(z_1,z_2)J_{nij}(z_1)\}\nonumber\\
&=&-\frac{1}{n}\sum_{i<j}E_j\{\beta_{ij}^{\text{tr}}(z_2)\underline{\beta}_{ij}(z_1)[F_{nij}(z_1,z_2)-G_{nij}(z_1,z_2)]
J_{nij}(z_1)\}\nonumber\\
&&-\frac{1}{n}\sum_{i<j}E_j\{\beta_{ij}^{\text{tr}}(z_2)\underline{\beta}_{ij}(z_1)G_{nij}(z_1,z_2)[J_{nij}(z_1)-K_{nij}(z_1)]\}\nonumber\\
&&-\frac{1}{n}\sum_{i<j}E_j\{\beta_{ij}^{\text{tr}}(z_2)\underline{\beta}_{ij}(z_1)G_{nij}(z_1,z_2)K_{nij}(z_1)\}\nonumber\\
&=&O_P(\frac{\sqrt{p}}{n})+O_P(\frac{p}{n})O_P(\frac{p^{1/2}}{n})+O_P(\frac{p^2}{n^2})=O_P(\frac{p^2}{n^2})+O_P(\frac{\sqrt{p}}{n}),\nonumber
\end{eqnarray}
where
$$F_{nij}(z_1,z_2)=\frac{1}{n}\bbx_i^\top\bbB^\top\bbA_{nij}^{-1}(z_2)
\underline{\bbA}_{nij}^{-1}(z_1)\bbB\bbx_i,$$$$G_{nij}(z_1,z_2)=\frac{1}{n}\text{tr}(\bbA_{nij}^{-1}(z_2)
\underline{\bbA}_{nij}^{-1}(z_1)),$$
$$J_{nij}(z_1)=\frac{1}{n}\bbx_i^\top\bbB^\top\underline{\bbA}_{nij}^{-1}(z_1)\bbB\bbx_{i},~~K_{nij}(z_1)=\frac{1}{n}\text{tr}(\underline{\bbA}_{nij}^{-1}(z_1)).$$
Therefore, we obtain that
\begin{equation}
d_{n2}+d_{n3}+d_{n4}=O_P(\frac{\sqrt{p}}{n})+O_P(\frac{p^{3/2}}{n^{3/2}}).\label{ab1}
\end{equation}
By the condition of $p/n=o(1)$ and  {\eqref{aa1}-\eqref{aa3}}, we have
$$\beta_{12}^{\text{tr}}(z_2)-1=O_P(\frac{p}{n})~~\text{and}~~ \underline{\beta}_{12}(z_1)-1=O_P(\frac{p}{n}).$$
Moreover, using  \eqref{a5c},
$E|\bbx_1^\top\bbB^\top\bbA_{n12}^{-1}(z_2)\underline{\bbA}_{n12}^{-1}(z_1)\bbB\bbx_1|=O(p)$.
Hence, we have
\begin{eqnarray}
d_{n1}&=&\frac{1}{n^2}\sum_{i<j}E_j[\bbx_i^\top\bbB^\top\bbA_{nij}^{-1}(z_2)\underline{\bbA}_{nij}^{-1}(z_1)\bbB\bbx_i]+O_P(\frac{p^{2}}{n^2}). \label{ab3}
\end{eqnarray}
Applying $\bbB\bbB^\top=\bbI_p$,  \eqref{a2}, \eqref{a5c}, \eqref{c6}, \eqref{ab1} and \eqref{ab3}, we obtain that
\begin{eqnarray}
&&E_{j-1}[E_j(\bbx_j^\top\bbB^\top\bbA_{nj}^{-1}(z_1)\bbB\bar\bbx_j)E_j(\bar\bbx_j^\top\bbB^\top\bbA_{nj}^{-1}(z_2)\bbB\bbx_j)]\nonumber\\
&=&\text{tr}[E_j(\bbA_{nj}^{-1}(z_1)\bbB\bar\bbx_j)E_j(\bar\bbx_j^\top\bbB^\top\bbA_{nj}^{-1}(z_2))]\nonumber\\
&=&E_j(\bar\bbx_j^\top\bbB^\top\bbA_{nj}^{-1}(z_2))E_j(\bbA_{nj}^{-1}(z_1)\bbB\bar\bbx_j)
=E_{j}[\bar\bbx_j^\top\bbB^\top\bbA_{nj}^{-1}(z_2)\underline{\bbA}_{nj}^{-1}(z_1)\bbB\underline{\bar\bbx}_j)]\nonumber\\
&=&\frac{1}{n^2}\sum_{i<j}E_j[\bbx_i^\top\bbB^\top\bbA_{nij}^{-1}(z_2)\underline{\bbA}_{nij}^{-1}(z_1)\bbB\bbx_i]+O_P(\frac{\sqrt{p}}{n})+O_P(\frac{p^{3/2}}{n^{3/2}})\nonumber\\
&=&\frac{1}{n^2}\sum_{i<j}E_j[\bbx_i^\top\bbB^\top\bbA_{nij}^{-1}(z_2)\underline{\bbA}_{nij}^{-1}(z_1)\bbB\bbx_i
-\text{tr}(\bbA_{nij}^{-1}(z_2)\underline{\bbA}_{ij}^{-1}(z_1))]\nonumber\\
&&+\frac{1}{n^2}\sum_{i<j}E_j[\text{tr}(\bbA_{nij}^{-1}(z_2)\underline{\bbA}_{nij}^{-1}(z_1))]+O_P(\frac{\sqrt{p}}{n})+O_P(\frac{p^{3/2}}{n^{3/2}})\nonumber\\
&=&\frac{1}{n^2}\sum_{i<j}E_j[\text{tr}(\bbA_{nij}^{-1}(z_2)\underline{\bbA}_{nij}^{-1}(z_1))]+O_P(\frac{\sqrt{p}}{n})+O_P(\frac{p^{3/2}}{n^{3/2}})\nonumber\\
&=&\frac{1}{n^2}\sum_{i<j}E_j[\text{tr}(\bbA_{nj}^{-1}(z_2)\underline{\bbA}_{nj}^{-1}(z_1))]+O_P(\frac{1}{n})+O_P(\frac{\sqrt{p}}{n})+O_P(\frac{p^{3/2}}{n^{3/2}})\nonumber\\
&=&\frac{j-1}{n^2}\text{tr}[E_j(\bbA_{nj}^{-1}(z_2))E_j(\bbA_{nj}^{-1}(z_1))]+O_P(\frac{\sqrt{p}}{n})+O_P(\frac{p^{3/2}}{n^{3/2}}).\label{ab2}
\end{eqnarray}
Consequently, (\ref{c1}) follows from (\ref{ab2}) immediately.

\textbf{Second step}, we will prove that
\begin{equation}
\text{RHS of}~(\ref{c1}) \xrightarrow{~~P~~}\frac{2}{(1-z_1)(1-z_2)},\label{mn3}
\end{equation}
as $p/n\rightarrow 0$ and $(p,n)\rightarrow\infty$. It can be found that
$$
\bbA_{nj}(z_1)+z_1\bbI_p-\frac{n-1}{n}b_1(z_1)\bbI_p=\frac{1}{n}\sum_{i\neq j}^n\bbB\bbx_i\bbx_i^\top\bbB^\top-\frac{n-1}{n}b_1(z_1)\bbI_p.
$$
Multiplying $(z_1\bbI_p-\frac{n-1}{n}b_1(z_1)\bbI_p)^{-1}$ from the left-hand side, $\bbA^{-1}_{nj}(z_1)$ from the right-hand side above, and using the fact that
$$\bbB\bbx_i\bbx_i^\top\bbB^\top\bbA^{-1}_{nj}(z_1)=\beta_{ij}(z_1)\bbB\bbx_i\bbx_i^\top\bbB^\top\bbA_{nij}^{-1}(z_1),$$
we obtain that
\begin{eqnarray}
\bbA_{nj}^{-1}(z_1)&=&-(z_1\bbI_p-\frac{n-1}{n}b_1(z_1)\bbI_p)^{-1}\nonumber\\
&&+\frac{1}{n}\sum_{i\neq j}\beta_{ij}(z_1)(z_1\bbI_p-\frac{n-1}{n}b_1(z_1)\bbI_p)^{-1}\bbB\bbx_i\bbx_i^\top\bbB^\top\bbA^{-1}_{nij}(z_1)\nonumber\\
&&-b_1(z_1)\frac{n-1}{n}(z_1\bbI_p-\frac{n-1}{n}b_1(z_1)\bbI_p)^{-1}\bbA^{-1}_{nj}(z_1)\nonumber\\
&=&-(z_1\bbI_p-\frac{n-1}{n}b_1(z_1)\bbI_p)^{-1}+b_1(z_1)\bbC(z_1)+\bbD(z_1)+\bbE(z_1),~~~~~~~~~\label{aa5}
\end{eqnarray}
where
$$\bbC(z_1)=\sum_{i\neq j}(z_1\bbI_p-\frac{n-1}{n}b_1(z_1)\bbI_p)^{-1}(\frac{1}{n}\bbB\bbx_i\bbx_i^\top\bbB^\top-\frac{1}{n}\bbI_p)\bbA^{-1}_{nij}(z_1),$$
$$\bbD(z_1)=\frac{1}{n}\sum_{i\neq j}(\beta_{ij}(z_1)-b_1(z_1))(z_1\bbI_p-\frac{n-1}{n}b_1(z_1)\bbI_p)^{-1}\bbB\bbx_i\bbx_i^\top\bbB^\top\bbA^{-1}_{nij}(z_1),$$
$$\bbE(z_1)=\frac{1}{n}b_1(z_1)(z_1\bbI_p-\frac{n-1}{n}b_1(z_1)\bbI_p)^{-1}\sum_{i\neq j}(\bbA^{-1}_{nij}(z_1)-\bbA^{-1}_{nj}(z_1)).$$
Let $K_1,K_2,\ldots,$ be some positive constants, which are independent of $n$ and may
have different values from line to line.
It can be seen that
\begin{equation}
\|(z_1\bbI_p-\frac{n-1}{n}b_1(z_1)\bbI_p)^{-1}\|\leq \frac{K_1}{v_0}.\label{aa6}
\end{equation}
In the following \eqref{aa7}-\eqref{aa9}, let $\bbM$ be any $p\times p$ nonrandom matrix with bounded spectral norm $\|\bbM\|$.
By  \eqref{a3}, \eqref{a5c}, (\ref{aa6}) and H\"{o}lder's inequality, we have
\begin{eqnarray}
E|\text{tr}(\bbD(z_1)\bbM)|
&\leq&\frac{1}{n}\sum_{i\neq j}(E(\beta_{ij}(z_1)-b_1(z_1))^2)^{1/2}\nonumber\\
&&\times(E(\bbx_i^\top\bbB^\top\bbA^{-1}_{ij}(z_1)\bbM(z_1\bbI_p-\frac{n-1}{n}b_1(z_1)\bbI_p)^{-1}\bbB\bbx_i)^2)^{1/2}\nonumber\\
&\leq&K_1\|\bbM\|[\frac{1}{n}+\frac{p}{n^2}]^{1/2}[p+p^2]^{1/2}
\leq K_2\frac{p}{n^{1/2}}.\label{aa7}
\end{eqnarray}
By \eqref{a3} and (\ref{aa6}),
\begin{eqnarray}
|\text{tr}\bbE(z_1)\bbM|\leq \frac{K_1\|\bbM\|}{v_0}.\label{aa8}
\end{eqnarray}
Moreover, by  \eqref{a5c}, (\ref{aa6}) and $\bbB\bbB^\top=\bbI_p$,
\begin{eqnarray}
&&E|\text{tr}\bbC(z_1)\bbM|\nonumber\\
&=& E|\text{tr}\sum_{i\neq j}(z_1\bbI_p-\frac{n-1}{n}b_1(z_1)\bbI_p)^{-1}(\frac{1}{n}\bbB\bbx_i\bbx_i^\top\bbB^\top-\frac{1}{n}\bbI_p)\bbA^{-1}_{nij}(z_1)\bbM|\nonumber\\
&\leq&\frac{K_1\|\bbM\|}{v_0^2n}\sum_{i\neq j}(E|\bbx_i^\top\bbB^\top\bbB\bbx_i-\text{tr}(\bbI_p)|^2)^{1/2}\leq\frac{K_2\|\bbM\|\sqrt{p}}{v_0^2}.\label{aa9}
\end{eqnarray}
By \eqref{a1}, write
\begin{equation}
\text{tr}E_{j}(\bbC(z_1)\bbA_{nj}^{-1}(z_2))=C_1(z_1,z_2)+C_2(z_1,z_2)+C_3(z_1,z_2),\label{aa12}
\end{equation}
where
\begin{eqnarray}
C_1(z_1,z_2)&=&-\text{tr}\sum_{i<j}(z_1\bbI_p-\frac{n-1}{n}b_1(z_1)\bbI_p)^{-1}\frac{1}{n}\bbB\bbx_i\bbx_i^\top\bbB^\top \nonumber\\
&&\times E_{j}[\bbA_{nij}^{-1}(z_1)\beta_{ij}(z_2)\bbA_{nij}^{-1}(z_2)\frac{1}{n}\bbB\bbx_i\bbx_i^\top\bbB^\top\bbA_{nij}^{-1}(z_2)]\nonumber\\
&=&-\frac{1}{n^2}\sum_{i<j}\bbx_i^\top \bbB^\top E_{j}[\beta_{ij}(z_2)\bbA_{nij}^{-1}(z_1)\bbA_{nij}^{-1}(z_2)\bbB\bbx_i\bbx_i^\top\bbB^\top\bbA_{nij}^{-1}(z_2)\nonumber\\
&&\times(z_1\bbI_p-\frac{n-1}{n}b_1(z_1)\bbI_p)^{-1}]\bbB\bbx_i,\nonumber\\
C_2(z_1,z_2)&=&-\frac{1}{n}\text{tr}\sum_{i<j}(z_1\bbI_p-\frac{n-1}{n}b_1(z_1)\bbI_p)^{-1} E_{j}[\bbA_{nij}^{-1}(z_1)(\bbA_{nj}^{-1}(z_2)-\bbA_{nij}^{-1}(z_2))],\nonumber\\
C_3(z_1,z_2)&=&\text{tr}\sum_{i<j}(z_1\bbI_p-\frac{n-1}{n}b_1(z_1)\bbI_p)^{-1}(\frac{1}{n}\bbB\bbx_i\bbx_i^\top\bbB^\top-\frac{\bbI_p}{n})
E_{j}[\bbA_{nij}^{-1}(z_1)\bbA_{nij}^{-1}(z_2)].\nonumber
\end{eqnarray}
We can get by \eqref{a2} and (\ref{aa6}) that
\begin{equation}
|C_2(z_1,z_2)|\leq \frac{K_1}{v_0^2}.\label{aa10}
\end{equation}
Following the proof of (\ref{aa9}), one can obtain that
\begin{equation}
E|C_3(z_1,z_2)|\leq\frac{K_2}{v_0^3}\sqrt p.\label{aa11}
\end{equation}
For $i<j$, it follows from  \eqref{a5c} and \eqref{a11} that
\begin{eqnarray}
&&E\Big|\frac{1}{n^2}\bbx_i^\top\bbB^\top E_{j}[\beta_{ij}(z_2)\bbA_{ij}^{-1}(z_1)\bbA_{nij}^{-1}(z_2)\bbB\bbx_i\nonumber\\
&&~~\times\bbx_i^\top\bbB^\top\bbA_{nij}^{-1}(z_2)(z_1\bbI_p-\frac{n-1}{n}b_1(z_1)\bbI_p)^{-1}]\bbB\bbx_i\nonumber\\
&&-b_1(z_1)\frac{1}{n^2}\text{tr}[E_{j}(\bbA_{nij}^{-1}(z_1)\bbA_{nij}^{-1}(z_2))]\nonumber\\
&&~~\times\text{tr}[E_{j}(\bbA_{nij}^{-1}(z_2))(z_1\bbI_p-\frac{n-1}{n}b_1(z_1)\bbI_p)^{-1}]\Big|\nonumber\\
&&\leq E\Big|\frac{1}{n^2}E_{j}\Big\{\Big[\bbx_i^\top\bbB^\top\beta_{ij}(z_2)\bbA_{ij}^{-1}(z_1)\bbA_{nij}^{-1}(z_2)\bbB\bbx_i
-\text{tr}(\beta_{ij}(z_2)\bbA_{ij}^{-1}(z_1)\bbA_{nij}^{-1}(z_2))\Big]\nonumber\\
&&~~~~\times\bbx_i^\top\bbB^\top\bbA_{nij}^{-1}(z_2)(z_1\bbI_p-\frac{n-1}{n}b_1(z_1)\bbI_p)^{-1}\bbB\bbx_i\Big\}\Big|\nonumber\\
&&+E\Big|\frac{1}{n^2}E_j\Big\{(\beta_{ij}(z_2)+b_1(z_1))\text{tr}(\bbA_{ij}^{-1}(z_1)\bbA_{nij}^{-1}(z_2))\nonumber\\
&&~~~~\times\Big[\bbx_i^\top\bbB^\top\bbA_{nij}^{-1}(z_2)(z_1\bbI_p-\frac{n-1}{n}b_1(z_1)\bbI_p)^{-1}\bbB\bbx_i\nonumber\\
&&~~~~~~~~-\text{tr}(\bbA_{nij}^{-1}(z_2)(z_1\bbI_p-\frac{n-1}{n}b_1(z_1)\bbI_p)^{-1})\Big]\Big\}\Big|\nonumber\\
&&\leq K_1(\frac{\sqrt{p}}{n}+\frac{p^{3/2}}{n^2}).\nonumber
\end{eqnarray}
By \eqref{a2}, it is easy to see that
\begin{eqnarray}
&&\Big|\text{tr}\Big[E_{j}(\bbA_{nij}^{-1}(z_1)\bbA_{nij}^{-1}(z_2))\Big]\text{tr}\Big[E_j(\bbA_{nij}^{-1}(z_2))
(z_1\bbI_p-\frac{n-1}{n}b_1(z_1)\bbI_p)^{-1}\Big]\nonumber\\
&&-\text{tr}\Big[E_{j}(\bbA_{nj}^{-1}(z_1)\bbA_{nj}^{-1}(z_2))\Big]\text{tr}\Big[E_j(\bbA_{nj}^{-1}(z_2))
(z_1\bbI_p-\frac{n-1}{n}\bbI_p)^{-1}\Big]\Big|\nonumber\\
&&\leq K_1p.\nonumber
\end{eqnarray}
Thus, it follows that
\begin{eqnarray}
&&E\Big|C_1(z_1,z_2)+\frac{j-1}{n^2}b_1(z_1)\text{tr}[E_{j}(\bbA_{nj}^{-1}(z_1)\bbA_{nj}^{-1}(z_2))]\nonumber\\
&&~~~~\times\text{tr}[E_j(\bbA_{nj}^{-1}(z_2))(z_1\bbI_p-\frac{n-1}{n}b_1(z_1)\bbI_p)^{-1}]\Big|\nonumber\\
&&\leq K_1(n\frac{\sqrt{p}}{n}+n\frac{p}{n^2}+n\frac{p^{3/2}}{n^2})=O(\sqrt{p}).\label{aa13}
\end{eqnarray}
Thus, from (\ref{aa5}) to (\ref{aa13}), we have
\begin{eqnarray}
&&\text{tr}\Big[E_j(\bbA_{nj}^{-1}(z_1)\bbA_{nj}^{-1}(z_2))\Big]\Big\{1+\frac{j-1}{n^2}b_1(z_1)b_1(z_2)\nonumber\\
&&~~~~\times\text{tr}\Big[E_j(\bbA_{nj}^{-1}(z_2))(z_1\bbI_p-\frac{n-1}{n}b_1(z_1)\bbI_p)^{-1}\Big]\Big\}\nonumber\\
&&=-\text{tr}\Big[(z_1\bbI_p-\frac{n-1}{n}b_1(z_1)\bbI_p)^{-1}E_j(\bbA_{nj}^{-1}(z_2))\Big]+C_4(z_1,z_2),\nonumber
\end{eqnarray}
where
$$E|C_4(z_1,z_2)|=O(\sqrt p).$$
Using the expression of $\bbA_{nj}^{-1}(z_2)$ in (\ref{aa5}) and (\ref{aa7})-(\ref{aa9}), we obtain that
\begin{eqnarray}
&&\text{tr}\Big[E_j(\bbA_{nj}^{-1}(z_1)\bbA_{nj}^{-1}(z_2))\Big]\Big\{1-\frac{j-1}{n^2}b_1(z_1)b_1(z_2)\nonumber\\
&&~~~~\times\text{tr}\Big[(z_2\bbI_p-\frac{n-1}{n}b_1(z_2)\bbI_p)^{-1}
(z_1\bbI_p-\frac{n-1}{n}b_1(z_1)\bbI_p)^{-1}]\Big\}\nonumber\\
&&=\text{tr}\Big[(z_2\bbI_p-\frac{n-1}{n}b_1(z_2)\bbI_p)^{-1}
(z_1\bbI_p-\frac{n-1}{n}b_1(z_1)\bbI_p)^{-1}\Big]+C_5(z_1,z_2),~~~~~~~~\label{aa14}
\end{eqnarray}
where
$$E|C_5(z_1,z_2)|=O(\sqrt p).$$
Therefore, by \ref{wq0}, \eqref{a11}, (\ref{ab2}) and (\ref{aa14}),
\begin{eqnarray}
&&\frac{4}{p}\sum_{j=1}^n E_{j-1}[E_j(\bbx_j^\top\bbB^\top\bbA_{nj}^{-1}(z_1)\bbB\bar\bbx_j)E_j(\bar\bbx_j^\top\bbB^\top\bbA_{nj}^{-1}(z_2)\bbB\bbx_j)]\nonumber\\
&=&\frac{4}{p}\sum_{j=1}^n\frac{j-1}{n^2}\text{tr}[E_j(\bbA_{nj}^{-1}(z_2))E_j(\bbA_{nj}^{-1}(z_1))]
+O_P(\frac{\sqrt{p}}{n}\frac{n}{p})+O_P(\frac{p^{3/2}}{n^{3/2}}\frac{n}{p})\nonumber\\
&=&\frac{4}{p}\sum_{j=1}^n\frac{j-1}{n^2}\frac{p
\frac{\text{tr}[(z_2\bbI_p-\frac{n-1}{n}b_1(z_2)\bbI_p)^{-1}
(z_1\bbI_p-\frac{n-1}{n}b_1(z_1)\bbI_p)^{-1}]}{p}}{1-\frac{j-1}{n}\frac{p}{n}
\frac{\text{tr}[(z_2\bbI_p-\frac{n-1}{n}b_1(z_2)\bbI_p)^{-1}
(z_1\bbI_p-\frac{n-1}{n}b_1(z_1)\bbI_p)^{-1}]}{p}}\nonumber\\
&&+O_P(\frac{1}{\sqrt{p}})+O_P(\frac{p^{1/2}}{n^{1/2}})+O(\frac{1}{p})O_P(\sqrt p)\nonumber\\
&=&4\sum_{j=1}^n\frac{j-1}{n^2}\frac{\frac{1}{(z_2-\frac{n-1}{n}b_1(z_2))(z_1-\frac{n-1}{n}b_1(z_1))}}{1-\frac{j-1}{n}\frac{p}{n}
\frac{1}{(z_2-\frac{n-1}{n}b_1(z_2))(z_1-\frac{n-1}{n}b_1(z_1))}}+O_P(\frac{1}{\sqrt{p}})+O_P(\frac{p^{1/2}}{n^{1/2}})\nonumber\\
&=&\frac{2}{(1-z_1)(1-z_2)}+o_P(1),\label{aa15}
\end{eqnarray}
as $p/n\rightarrow 0$ and $(p,n)\rightarrow\infty$. Thus, the proof of (\ref{mn3}) is completed.\qed

\subsection{Tightness of $\hat{M}^{(1)}_n(z)$}
This subsection is to prove the tightness of $\hat{M}^{(1)}_n(z)$ for $z\in \mathcal{C}$, which is a truncated version of $M_n(z)$ as in  \eqref{w4}.
Let $v_0>0$ be arbitrary and
Denote $\mathcal{C}_u=\{u+iv_0,~u\in[u_l,u_r]\}$, where $u_l=1-\delta$ and $u_r=1+\delta$ and $\delta\in (0,1)$.
Let $\mathcal{C}_l=\{u_l+iv:v\in[n^{-1}\rho_n,v_0]\}$ and $\mathcal{C}_r=\{u_r+iv: v\in [n^{-1}\rho_n,v_0]\}$, where $\rho_n\geq n^{-\vartheta}$, $\vartheta\in (0,1)$ and $\rho_n\downarrow 0$. Then it has $\mathcal{C}_n^{+}=\mathcal{C}_l\cup\mathcal{C}_u\cup \mathcal{C}_r$.

For $Y_j(z)$ defined in  \eqref{b6}, by  \eqref{mn5}, we have
\begin{equation}
E\Big|\sum_{i=1}^r a_i\sum_{j=1}^n Y_j(z_i)\Big|^2=\sum_{j=1}^n E|\sum_{i=1}^r a_iY_j(z_i)|^2\leq K_1,~~v_0=\mathfrak{I}(z_i),\nonumber
\end{equation}
which ensures that the condition (i) of Theorem 12.3 in Billingsley \cite{Billingsley1968} is satisfied (see Bai and Silverstein \cite{Bai 2004}). Therefore, to complete the proof of tightness, we have to show that
\begin{equation}
E\frac{|M_n^{(1)}(z_1)-M_n^{(1)}(z_2)|^2}{|z_1-z_2|^2}\leq K_2~~~~\text{for~all}~~~z_1,z_2\in \mathcal{C}_n^{+}\cup \mathcal{C}_n^{-}.\label{m1}
\end{equation}
Since $\mathcal{C}_n^{+}$ and $\mathcal{C}_n^{-}$ are symmetric, we only prove the above inequality on $\mathcal{C}_n^{+}$. Throughout this section, all bounds including $O(\cdot)$ and $o(\cdot)$ expressions hold uniformly on $z\in \mathcal{C}_n^{+}$.

Note that when $z\in \mathcal C_u$, $\|\bbA_{nj}^{-1}(z)\|$ is bounded in $n$. But this is not the case
for $z\in \mathcal C_r$ or $z\in\mathcal C_l$. In general, for $z\in \mathcal{C}_n^{+}$, we have
\begin{equation}
\|\bbA_{nj}^{-1}(z)\|\leq K_3+v^{-1}I(\|\tilde{\bbS}_{nj}\|\geq h_r~~\text{or}~~\lambda_{\min}(\tilde{\bbS}_{nj})\leq h_l),\label{m3}
\end{equation}
where $\tilde{\bbS}_{nj}=\tilde{\bbS}_n-n^{-1}\bbB\bbx_j\bbx_j^\top\bbB^\top$, $h_l\in (u_l,1)$,  $h_r\in(1,u_r)$, $u_l=1-\delta$ and $u_r=1+\delta$ for some $\delta\in (0,1)$.

It follows from (\ref{kk1}) in Lemma A.5 that for any positive number $k>0$,
\begin{equation}
P(\|\tilde{\bbS}_n\|\geq h_r )=o(n^{-k}),~~P(\lambda_{\min}(\tilde{\bbS}_n)\leq h_l )=o(n^{-k}).\label{ak1}
\end{equation}

Obviously, $\bbA_n^{-1}(z_1)-\bbA_n^{-1}(z_2)=(z_2-z_1)\bbA_n^{-1}(z_1)\bbA_n^{-1}(z_2)$. Then, by the martingale method in \eqref{b1}, write
\begin{eqnarray}
&&\frac{M_n^{(1)}(z_1)-M_n^{(1)}(z_2)}{z_1-z_2}\nonumber\\
&=&-\frac{n}{\sqrt{p}}\sum_{j=1}^n (E_j-E_{j-1})[\bar\bbx^\top\bbB^\top\bbA_n^{-1}(z_1)\bbA_n^{-1}(z_2)\bbB\bar\bbx-\bar\bbx_j^\top\bbB^\top\bbA_{nj}^{-1}(z_1)\bbA_{nj}^{-1}(z_2)\bbB\bar\bbx_j]\nonumber\\
&=&-\frac{n}{\sqrt{p}}\sum_{j=1}^n (E_j-E_{j-1})(q_{n1}+q_{n2}+q_{n3}),\label{m4}
\end{eqnarray}
where
$$q_{n1}=(\bar\bbx-\bar\bbx_j)^\top\bbB^\top\bbA_n^{-1}(z_1)\bbA_n^{-1}(z_2)\bbB\bar\bbx,$$
$$q_{n2}=\bar\bbx_j^\top\bbB^\top[\bbA_n^{-1}(z_1)\bbA_n^{-1}(z_2)-\bbA^{-1}_{nj}(z_1)\bbA_{nj}^{-1}(z_2)]\bbB\bar\bbx$$
and
$$q_{n3}=\bar\bbx_j^\top\bbB^\top\bbA_{nj}^{-1}(z_1)\bbA_{nj}^{-1}(z_2)\bbB(\bar\bbx-\bar\bbx_j).$$
For $1\leq j\leq n$, denote
\begin{equation}
Q_{nj}=\{\|\tilde{\bbS}_{nj}\|< h_r~\text{and}~\lambda_{\min}(\tilde{\bbS}_{nj})> h_l\},Q_{nj}^c=\{\|\tilde{\bbS}_{nj}\|\geq h_r~\text{or}~\lambda_{\min}(\tilde{\bbS}_{nj})\leq h_l\}.\label{wq3}
\end{equation}
Then, for $z\in \mathcal{C}_n^{+}$, by $\rho_n\geq n^{-\vartheta}$, $0<\vartheta<1$,  \eqref{mn5}, \eqref{a8}, (\ref{m3}), (\ref{ak1}) and \eqref{wq3}, we establish that
\begin{eqnarray}
&&E\Big|\frac{n}{\sqrt{p}}\sum_{j=1}^n(E_j-E_{j-1})q_{n3}\Big|^2\nonumber\\
&\leq& \frac{K_1}{p}\sum_{j=1}^n E|\bar\bbx_j^\top\bbB^\top\bbA_{nj}^{-1}(z_1)\bbA_{nj}^{-1}(z_2)\bbB\bbx_j|^2\nonumber\\
&=& \frac{K_1}{p}\sum_{j=1}^n E[|\bar\bbx_j^\top\bbB^\top\bbA_{nj}^{-1}(z_1)\bbA_{nj}^{-1}(z_2)\bbB\bbx_j|^2I(Q_{nj})]\nonumber\\
&&+\frac{K_1}{p}\sum_{j=1}^n E[|\bar\bbx_j^\top\bbB^\top\bbA_{nj}^{-1}(z_1)\bbA_{nj}^{-1}(z_2)\bbB\bbx_j|^2I(Q_{nj}^c)]\nonumber\\
&\leq& \frac{K_1}{p}\sum_{j=1}^n E[|\bar\bbx_j^\top\bbB^\top\bbA_{nj}^{-1}(z_1)\bbA_{nj}^{-1}(z_2)\bbB\bbx_j|^2I(Q_{nj})]\nonumber\\
&&+\frac{K_1}{p}\sum_{j=1}^n (E|\bar\bbx_j^\top\bbB^\top\bbA_{nj}^{-1}(z_1)\bbA_{nj}^{-1}(z_2)\bbB\bbx_j|^4)^{1/2}(P(Q_{nj}^c))^{1/2}\nonumber\\
&\leq&\frac{K_2}{v_0^4}\frac{n}{p}\frac{p}{n}+K_3\frac{p^{1/2}}{n^{1/2}v^4}(P(\|\tilde{\bbS}_1\|\geq h_r~\text{or}~\lambda_{\min}(\tilde{\bbS}_1)\leq h_l))^{1/2}\nonumber\\
&\leq&K_4+K_5\frac{p^{1/2}}{n^{1/2}}n^4\rho_n^{-4}n^{-k/2}\leq K_4+K_5\frac{p^{1/2}}{n^{1/2}}n^{4+4\vartheta}n^{-k/2}\leq K_6,\label{kk3}
\end{eqnarray}
for  $k\geq 8+8\vartheta$ and $0<\vartheta<1$.

For $q_{n2}$, write
$$q_{n2}=\sum_{j=1}^6q_{n2}^{(j)},$$
where
$$q_{n2}^{(1)}=\frac{1}{n^2}\bar\bbx_j^\top\bbB^\top\beta_j(z_1)\beta_j(z_2)\tilde\bbA_{nj}(z_1)\tilde\bbA_{nj}(z_2)\bbB\bar\bbx_j,$$
$$q_{n2}^{(2)}=-\frac{1}{n}\bar\bbx_j^\top\bbB^\top\beta_j(z_1)\tilde\bbA_{nj}(z_1)\bbA_{nj}^{-1}(z_2)\bbB\bar\bbx_j,$$
$$q_{n2}^{(3)}=-\frac{1}{n}\bar\bbx_j^\top\bbB^\top\beta_j(z_2)\bbA_{nj}^{-1}(z_1)\tilde\bbA_{nj}(z_2)\bbB\bar\bbx_j,$$
$$q_{n2}^{(4)}=\frac{1}{n^3}\bar\bbx_j^\top\bbB^\top\beta_j(z_1)\beta_j(z_2)\tilde\bbA_{nj}(z_1)\tilde\bbA_{nj}(z_2)\bbB\bbx_j,$$
$$q_{n2}^{(5)}=-\frac{1}{n^2}\bar\bbx_j^\top\bbB^\top\beta_j(z_1)\tilde\bbA_{nj}(z_1)\bbA_{nj}^{-1}(z_2)\bbB\bbx_j,$$
$$q_{n2}^{(6)}=-\frac{1}{n^2}\bar\bbx_j^\top\bbB^\top\beta_j(z_2)\bbA_{nj}^{-1}(z_1)\tilde\bbA_{nj}(z_2)\bbB\bbx_j.$$
For $z\in \mathcal C_u$, by $\bbB\bbB^\top=\bbI_p$,  \eqref{a8}, \eqref{a11} and \eqref{wq3},
\begin{eqnarray}
&&E\Big|\frac{n}{\sqrt p}\sum_{j=1}^n(E_j-E_{j-1})q_{n2}^{(6)}\Big|^2\nonumber\\
&\leq&\frac{K_1}{p}\sum_{j=1}^nE|(E_j-E_{j-1})\beta_j(z_2)\frac{1}{n}\bar\bbx_j^\top\bbB^\top\bbA_{nj}^{-1}(z_1)\tilde\bbA_{nj}(z_2)\bbB\bbx_j|^2\nonumber\\
&=&\frac{K_1}{p}\sum_{j=1}^nE|(E_j-E_{j-1})\beta_j(z_2)\frac{1}{n}\bar\bbx_j^\top\bbB^\top\bbA_{nj}^{-1}(z_1)\bbA_{nj}^{-1}(z_2)\bbB\bbx_j\bbx_j^\top\bbB^\top\bbA_{nj}^{-1}(z_2)\bbB\bbx_j|^2\nonumber\\
&\leq&\frac{K_2}{p}\sum_{j=1}^nE|\bar\bbx_j^\top\bbB^\top\bbA_{nj}^{-1}(z_1)\bbA_{nj}^{-1}(z_2)\bbB\bbx_j\frac{1}{n}
[\bbx_j^\top\bbB^\top\bbA_{nj}^{-1}(z_2)\bbB\bbx_j-\text{tr}(\bbA_{nj}^{-1}(z_2))]|^2\nonumber\\
&&+\frac{K_3}{p}\sum_{j=1}^nE|\bar\bbx_j^\top\bbB^\top\bbA_{nj}^{-1}(z_1)\bbA_{nj}^{-1}(z_2)\bbB\bbx_j\frac{1}{n}\text{tr}(\bbA_{nj}^{-1}(z_2))|^2\nonumber\\
&\leq&\frac{K_4n}{p}\frac{p^{1/2}}{n}+\frac{K_5n}{p}\sqrt{\frac{p}{n}}\frac{p^2}{n^2}=o(1).\nonumber
\end{eqnarray}
Then, similarly to the proof of (\ref{kk3}), for $z\in \mathcal{C}_n^{+}$, by $\bbB\bbB^\top=\bbI_p$,  \eqref{a5}, (\ref{m3}), (\ref{ak1}), \eqref{wq3} and H\"{o}lder's inequality, we have
\begin{eqnarray}
&&E\Big|\frac{n}{\sqrt p}\sum_{j=1}^n(E_j-E_{j-1})q_{n2}^{(6)}\Big|^2\nonumber\\
&\leq&\frac{K_1}{p}\sum_{j=1}^nE[|(E_j-E_{j-1})\beta_j(z_2)\nonumber\\
&&\times\frac{1}{n}\bar\bbx_j^\top\bbB^\top\bbA_{nj}^{-1}(z_1)\bbA_{nj}^{-1}(z_2)\bbB\bbx_j\bbx_j^\top\bbB^\top\bbA_{nj}^{-1}(z_2)\bbB\bbx_j|^2I(Q_{nj})]\nonumber\\
&&+\frac{K_1}{p}\sum_{j=1}^nE[|(E_j-E_{j-1})\beta_j(z_2)\nonumber\\
&&\times\frac{1}{n}\bar\bbx_j^\top\bbB^\top\bbA_{nj}^{-1}(z_1)\bbA_{nj}^{-1}(z_2)\bbB\bbx_j\bbx_j^\top\bbB^\top\bbA_{nj}^{-1}(z_2)\bbB\bbx_j|^2I(Q_{nj}^c)]\nonumber
\end{eqnarray}
\begin{eqnarray}
&\leq& K_1+\frac{K_2}{n^2p}\sum_{j=1}^n (E|\beta_j(z_2)|^8)^{1/4}(E(\bar\bbx_j^\top\bbB^\top\bbA_{nj}^{-1}(z_1)\bbA_{nj}^{-1}(z_2)\bbB\bbx_j)^8)^{1/4}\nonumber\\
&&\times(E(\bbx_j^\top\bbB^\top\bbA_{nj}^{-1}(z_2)\bbB\bbx_j)^8)^{1/4}(P(\|\tilde{\bbS}_{n1}\|\geq h_r~\text{or}~\lambda_{\min}(\tilde{\bbS}_{n1})\leq h_l))^{1/4}\nonumber\\
&\leq&K_2+\frac{K_3}{pn^2}\sum_{j=1}^n\frac{1}{n^2v^2}(E(\bbx_j^\top\bbB^\top\bbB\bbx_j)^8)^{1/4}\frac{1}{v^4}
(E(\bar\bbx_j^\top\bbB^\top\bbB\bbx_j)^8)^{1/4}\nonumber\\
&&\times\frac{1}{v^2}(E(\bbx_j^\top\bbB^\top\bbB\bbx_j)^8)^{1/4}(P(\|\tilde{\bbS}_{n1}\|\geq h_r~\text{or}~\lambda_{\min}(\tilde{\bbS}_{n1})\leq h_l))^{1/4}\nonumber\\
&\leq&K_2+\frac{K_4n}{pn^2}\frac{1}{n^2v^2}(p^5+p^4n^3+p^8)^{1/4}\frac{1}{v^4}
(\frac{p^2}{n^2})^{1/4}\frac{1}{v^2}(p^5+p^4n^3+p^8)^{1/4}n^{-k/4}\nonumber\\
&\leq&K_2+\frac{K_4}{pn}\frac{1}{n^2v^8}(p^4n^3+p^8)^{1/2}
\frac{p^{1/2}}{n^{1/2}}n^{-k/4}\nonumber\\
&\leq&K_2+K_5n^{6+8\vartheta}p^{1/2}n^{-k/4}\leq K_6\nonumber
\end{eqnarray}
for $k\geq 4(7+8\vartheta)$ and $0<\vartheta<1$. Here one uses the fact that on the event $(\|\tilde{\bbS}_{n}\|\geq h_r~\text{or}~\lambda_{\min}(\tilde{\bbS}_{n})\leq h_l)$,
\begin{eqnarray}
|\beta_j(z)|&=&|\frac{1}{1+n^{-1}\bbx_j^\top\bbB^\top\bbA_{nj}^{-1}(z)\bbB\bbx_j}|
=|1-\frac{n^{-1}\bbx_j^\top\bbB^\top\bbA_{nj}^{-1}(z)\bbB\bbx_j}{1+n^{-1}\bbx_j^\top\bbB^\top\bbA_{nj}^{-1}(z)\bbB\bbx_j}|\nonumber\\
&=&|1-n^{-1}\bbx_j^\top\bbB^\top\bbA_{n}^{-1}(z)\bbB\bbx_j|\leq 1+n^{-1}v^{-1}\bbx_j^\top\bbB^\top\bbB\bbx_j.\label{ak2}
\end{eqnarray}

Similarly, for $q_{n2}^{(1)}$ and $z\in \mathcal C_u$, by $\bbB\bbB^\top=\bbI_p$,  \eqref{a8} and \eqref{a11},
\begin{eqnarray}
&&E\Big|\frac{n}{\sqrt{p}}\sum_{j=1}^n(E_j-E_{j-1})q_{n2}^{(1)}\Big|^2\nonumber\\
&=&E\Big|\frac{n}{\sqrt{p}}\sum_{j=1}^n(E_j-E_{j-1})\frac{1}{n^2}\bar\bbx_j^\top\bbB^\top\beta_j(z_1)\beta_j(z_2)\tilde\bbA_{nj}(z_1)\tilde\bbA_{nj}(z_2)\bbB\bar\bbx_j\Big|^2\nonumber\\
&=&E\Big|\frac{1}{\sqrt{p}}\sum_{j=1}^n(E_j-E_{j-1})\frac{1}{n}\bar\bbx_j^\top\bbB^\top\beta_j(z_1)\beta_j(z_2)\bbA_{nj}^{-1}(z_1)\bbB\bbx_j\nonumber\\
&&\times\bbx_j^\top\bbB^\top\bbA_{nj}^{-1}(z_1)
\bbA_{nj}^{-1}(z_2)\bbB\bbx_j\bbx_j^\top\bbB^\top\bbA_{nj}^{-1}(z_2)\bbB\bar\bbx_j\Big|^2\nonumber\\
&\leq&\frac{K_1}{p}\sum_{j=1}^nE\Big|\bar\bbx_j^\top\bbB^\top\bbA_{nj}^{-1}(z_1)\bbB\bbx_j\frac{1}{n}[\bbx_j^\top\bbB^\top\bbA_{nj}^{-1}(z_1)
\bbA_{nj}^{-1}(z_2)\bbB\bbx_j\nonumber\\
&&-\text{tr}(\bbA_{nj}^{-1}(z_1)
\bbA_{nj}^{-1}(z_2))]\bbx_j^\top\bbB^\top\bbA_{nj}^{-1}(z_2)\bbB\bar\bbx_j\Big|^2\nonumber\\
&&+\frac{K_2}{p}\sum_{j=1}^n(E(\bar\bbx_j^\top\bbB^\top\bbA_{nj}^{-1}(z_1)\bbB\bbx_j)^6)^{1/3}(E(\frac{1}{n}\text{tr}[\bbA_{nj}^{-1}(z_1)
\bbA_{nj}^{-1}(z_2)])^6)^{1/3}\nonumber\\
&&\times(E(\bbx_j^\top\bbB^\top\bbA_{nj}^{-1}(z_2)\bbB\bar\bbx_j)^6)^{1/3}\nonumber\\
&\leq&\frac{K_3n}{p}\frac{p^{1/2}}{n}+\frac{K_4n}{p}\frac{p^{3}}{n^{3}}=O(1).\nonumber
\end{eqnarray}
For $q_{n2}^{(2)}$, by  \eqref{a8},
\begin{eqnarray}
&&E\Big|\frac{n}{\sqrt{p}}\sum_{j=1}^n(E_j-E_{j-1})q_{n2}^{(2)}\Big|^2\nonumber\\
&=&E\Big|\frac{n}{\sqrt{p}}\sum_{j=1}^n(E_j-E_{j-1})\frac{1}{n}\bar\bbx_j^\top\bbB^\top\beta_j(z_1)\tilde\bbA_{nj}(z_1)\bbA_{nj}^{-1}(z_2)\bbB\bar\bbx_j\Big|^2\nonumber\\
&=&E\Big|\frac{n}{\sqrt{p}}\sum_{j=1}^n(E_j-E_{j-1})\frac{1}{n}\bar\bbx_j^\top\bbB^\top\beta_j(z_1)\bbA_{nj}^{-1}(z_1)\bbB\bbx_j\bbx_j^\top\bbB^\top
\bbA_{nj}^{-1}(z_1)\bbA_{nj}^{-1}(z_2)\bbB\bar\bbx_j\Big|^2\nonumber\\
&\leq&\frac{K_1}{p}\sum_{j=1}^nE|\bar\bbx_j^\top\bbB^\top\bbA^{-1}_{nj}(z_1)\bbB\bbx_j\bbx_j^\top\bbB^\top
\bbA_{nj}^{-1}(z_1)\bbA_{nj}^{-1}(z_2)\bbB\bar\bbx_j|^2\nonumber\\
&\leq&\frac{K_1}{p}\sum_{j=1}^n(E|\bar\bbx_j^\top\bbB^\top\bbA^{-1}_{nj}(z_1)\bbB\bbx_j|^4)^{1/2}
(E|\bbx_j^\top\bbB^\top\bbA_{nj}^{-1}(z_1)\bbA_{nj}^{-1}(z_2)\bbB\bar\bbx_j|^4)^{1/2}\nonumber\\
&\leq&\frac{K_2n}{p}\frac{p}{n}=O(1).\nonumber
\end{eqnarray}
For $q_{n2}^{(3)}$, by  \eqref{a8},
\begin{eqnarray}
&&E\Big|\frac{n}{\sqrt{p}}\sum_{j=1}^n(E_j-E_{j-1})q_{n2}^{(3)}\Big|^2\nonumber\\
&=&E\Big|\frac{n}{\sqrt{p}}\sum_{j=1}^n(E_j-E_{j-1})\frac{1}{n}\bar\bbx_j^\top\bbB^\top\beta_j(z_2)\bbA_{nj}^{-1}(z_1)\bbA_{nj}^{-1}(z_2)\bbB\bbx_j\bbx_j^\top\bbB^\top
\bbA_{nj}^{-1}(z_2)\bbB\bar\bbx_j|^2\nonumber\\
&\leq&\frac{K_1}{p}\sum_{j=1}^nE|\bar\bbx_j^\top\bbB^\top\bbA_{nj}^{-1}(z_1)\bbA_{nj}^{-1}(z_2)\bbB\bbx_j\bbx_j^\top\bbB^\top
\bbA_{nj}^{-1}(z_2)\bbB\bar\bbx_j|^2\nonumber\\
&\leq&\frac{K_1}{p}\sum_{j=1}^n(E|\bar\bbx_j^\top\bbB^\top\bbA_{nj}^{-1}(z_1)\bbA_{nj}^{-1}(z_2)\bbB\bbx_j|^4)^{1/2}
(E|\bbx_j^\top\bbB^\top\bbA_{nj}^{-1}(z_2)\bbB\bar\bbx_j|^4)^{1/2}\nonumber\\
&\leq &\frac{K_2n}{p}\frac{p}{n}=O(1).\nonumber
\end{eqnarray}
For $q_{n2}^{(4)}$, by $\bbB\bbB^\top=\bbI_p$,  \eqref{a5}, \eqref{a8}, \eqref{a11} and H\"{o}lder's inequality,
\begin{eqnarray}
&&E\Big|\frac{n}{\sqrt{p}}\sum_{j=1}^n(E_j-E_{j-1})q_{n2}^{(4)}\Big|^2\nonumber\\
&=&E\Big|\frac{n}{\sqrt{p}}\sum_{j=1}^n(E_j-E_{j-1})\frac{1}{n^3}\bar\bbx_j^\top\bbB^\top\beta_j(z_1)\beta_j(z_2)\bbA_{nj}^{-1}(z_1)\bbB\bbx_j\nonumber\\
&&\times\bbx_j^\top\bbB^\top\bbA_{nj}^{-1}(z_1)
\bbA_{nj}^{-1}(z_2)\bbB\bbx_j\bbx_j^\top\bbB^\top\bbA_{nj}^{-1}(z_2)\bbB\bbx_j\Big|^2\nonumber\\
&\leq&\frac{K_1}{p}\sum_{j=1}^nE|\bar\bbx_j^\top\bbB^\top\bbA_{nj}^{-1}(z_1)\bbB\bbx_j\frac{1}{n}\bbx_j^\top\bbB^\top\bbA_{nj}^{-1}(z_1)
\bbA_{nj}^{-1}(z_2)\bbB\bbx_j\nonumber\\
&&\times\frac{1}{n}(\bbx_j^\top\bbB^\top\bbA_{nj}^{-1}(z_2)\bbB\bbx_j-\text{tr}(\bbA_{nj}^{-1}(z_2)))|^2\nonumber\\
&&+\frac{K_2}{p}\sum_{j=1}^nE|\bar\bbx_j^\top\bbB^\top\bbA_{nj}^{-1}(z_1)\bbB\bbx_j\frac{1}{n}\bbx_j^\top\bbB^\top\bbA_{nj}^{-1}(z_1)
\bbA_{nj}^{-1}(z_2)\bbB\bbx_j\frac{1}{n}\text{tr}(\bbA_{nj}^{-1}(z_2))|^2\nonumber\\
&\leq&\frac{K_3n}{p}\frac{p^{1/2}}{n}+K_4\frac{n}{p}[\frac{p^{1/2}}{n^{1/2}}(\frac{p^{4}}{n^6}+\frac{p^3}{n^4}+\frac{p^6}{n^6})\frac{p^6}{n^6}]=O(1).\nonumber
\end{eqnarray}
For $q_{n2}^{(5)}$, by $\bbB\bbB^\top=\bbI_p$,  \eqref{a8} and \eqref{a11},
\begin{eqnarray}
&&E\Big|\frac{n}{\sqrt{p}}\sum_{j=1}^n(E_j-E_{j-1})q_{n2}^{(5)}\Big|^2\nonumber\\
&=&E\Big|\frac{n}{\sqrt{p}}\sum_{j=1}^n(E_j-E_{j-1})\frac{1}{n^2}\bar\bbx_j^\top\bbB^\top\beta_j(z_1)\bbA_{nj}^{-1}(z_1)\bbB\bbx_j\bbx_j^\top\bbB^\top
\bbA_{nj}^{-1}(z_1)\bbA_{nj}^{-1}(z_2)\bbB\bbx_j|^2\nonumber\\
&\leq&\frac{K_1}{p}\sum_{j=1}^nE|\bar\bbx_j^\top\bbB^\top
\bbA_{nj}^{-1}(z_1)\bbB\bbx_j\frac{1}{n}(\bbx_j^\top\bbB^\top\bbA_{nj}^{-1}(z_1)\bbA_{nj}^{-1}(z_2)\bbB\bbx_j
-\text{tr}(\bbA_{nj}^{-1}(z_1)\bbA_{nj}^{-1}(z_2)))|^2\nonumber\\
&&+\frac{K_2}{p}\sum_{j=1}^n(E(\bar\bbx_j^\top\bbB^\top\bbA_{nj}^{-1}(z_1)\bbB\bbx_j)^4)^{1/2}(E(\frac{1}{n}\text{tr}(\bbA_{nj}^{-1}(z_1)\bbA_{nj}^{-1}(z_2)))^4)^{1/2}\nonumber\\
&\leq&\frac{K_3n}{p}\frac{p^{1/2}}{n}+K_4\frac{n}{p}\frac{p^{17/8}}{n^{17/8}}=O(1).\nonumber
\end{eqnarray}
Hence, we obtain that
$$
E\Big|\frac{n}{\sqrt{p}}\sum_{j=1}^n(E_j-E_{j-1})q_{n2}^{(j)}\Big|^2=O(1),~~j=1,\ldots,6,\nonumber
$$
for $z\in \mathcal{C}_n^{+}$.

Now, we consider $q_{n1}$. Write
\begin{eqnarray}
q_{n1}&=&(\bar\bbx-\bar\bbx_j)^\top\bbB^\top\bbA^{-1}_n(z_1)\bbA^{-1}_n(z_2)\bbB\bar\bbx\nonumber\\
&=&\frac{1}{n^2}\bbx_j^\top\bbB^\top\bbA^{-1}_n(z_1)\bbA^{-1}_n(z_2)\bbB\bbx_j+\frac{1}{n}\bbx_j^\top\bbB^\top
\bbA^{-1}_n(z_1)\bbA_{nj}^{-1}(z_2)\bbB\bar\bbx_j\nonumber\\
&&+\frac{1}{n}\bbx_j^\top\bbB^\top\bbA^{-1}_n(z_1)(\bbA^{-1}_n(z_1)-\bbA_{nj}^{-1}(z_2))\bbB\bar\bbx_j\nonumber\\
&=&q_{n1}^{(1)}+q_{n1}^{(2)}+q_{n1}^{(3)},\nonumber
\end{eqnarray}
where
$$q_{n1}^{(1)}=\frac{1}{n^2}\beta_j(z_1)\beta_j(z_2)\bbx_j^\top\bbB^\top\bbA_{nj}^{-1}(z_1)\bbA_{nj}^{-1}(z_2)\bbB\bbx_j,$$
$$q_{n1}^{(2)}=\frac{1}{n}\beta_j(z_1)\bbx_j^\top\bbB^\top\bbA_{nj}^{-1}(z_1)\bbA_{nj}^{-1}(z_2)\bbB\bar\bbx_j$$
and
$$q_{n1}^{(3)}=-\frac{1}{n^2}\beta_j(z_1)\beta_j(z_2)\bbx_j^\top\bbB^\top\bbA_{nj}^{-1}(z_1)\tilde\bbA_{nj}(z_2)\bbB\bar\bbx_j.$$
For $q_{n1}^{(1)}$ and $z\in \mathcal C_u$, by  \eqref{a5c},
\begin{eqnarray}
&&E\Big|\frac{n}{\sqrt{p}}\sum_{j=1}^n(E_j-E_{j-1})q_{n1}^{(1)}\Big|^2\nonumber\\
&=&E\Big|\frac{1}{\sqrt{p}}\sum_{j=1}^n(E_j-E_{j-1})\frac{1}{n}\beta_j(z_1)\beta_j(z_2)\bbx_j^\top\bbB^\top
\bbA_{nj}^{-1}(z_1)\bbA_{nj}^{-1}(z_2)\bbB\bbx_j\Big|^2\nonumber\\
&\leq&\frac{K_1}{n^2p}\sum_{j=1}^nE|\bbx_j^\top\bbB^\top\bbA_{nj}^{-1}(z_1)\bbA_{nj}^{-1}(z_2)\bbB\bbx_j|^2\leq\frac{K_2}{np}(p+p^2)\nonumber\\
&=&O(\frac{p}{n})=o(1).\nonumber
\end{eqnarray}
For $q_{n1}^{(2)}$, by  \eqref{a12},
\begin{eqnarray}
&&E\Big|\frac{n}{\sqrt{p}}\sum_{j=1}^n(E_j-E_{j-1})q_{n1}^{(2)}\Big|^2\nonumber\\
&\leq&E\Big|\frac{1}{\sqrt{p}}\sum_{j=1}^n(E_j-E_{j-1})\beta_j(z_1)\bbx_j^\top\bbB^\top\bbA_{nj}^{-1}(z_1)\bbA_{nj}^{-1}(z_2)\bbB\bar\bbx_j\Big|^2\nonumber\\
&\leq&\frac{K_1}{p}\sum_{j=1}^nE|\bbx_j^\top\bbB^\top\bbA_{nj}^{-1}(z_1)\bbA_{nj}^{-1}(z_2)\bbB\bar\bbx_j|^2\leq\frac{K_2n}{p}\frac{p}{n}=O(1).\nonumber
\end{eqnarray}
For $q_{n1}^{(3)}$, by  \eqref{a5} and \eqref{a8},
\begin{eqnarray}
&&E\Big|\frac{n}{\sqrt{p}}\sum_{j=1}^n(E_j-E_{j-1})q_{n1}^{(3)}\Big|^2\nonumber\\
&=&E\Big|\frac{1}{\sqrt{p}}\sum_{j=1}^n(E_j-E_{j-1})\nonumber\\
&&\times\frac{1}{n}\beta_j(z_1)\beta_j(z_2)\bbx_j^\top\bbB^\top
\bbA_{nj}^{-1}(z_1)\bbA_{nj}^{-1}(z_2)\bbB\bbx_j\bbx_j^\top\bbB^\top\bbA_{nj}^{-1}(z_2)\bbB\bar\bbx_j\Big|^2\nonumber\\
&\leq&\frac{K_1}{n^2p}\sum_{j=1}^n(E|\bbx_j^\top\bbB^\top\bbA_{nj}^{-1}(z_1)\bbA_{nj}^{-1}(z_2)\bbB\bbx_j|^4)^{1/2}
(E|\bbx_j^\top\bbB^\top\bbA_{nj}^{-1}(z_2)\bbB\bar\bbx_j|^4)^{1/2}\nonumber\\
&\leq&\frac{K_2}{np}(p^3+np^2+p^4)^{1/2}\frac{p^{1/2}}{n^{1/2}}=O(1).\nonumber
\end{eqnarray}

Consequently, similarly to the proof of $q_{n2}^{(6)}$, for $z\in \mathcal{C}_n^{+}$, we obtain that
$$E\Big|\frac{n}{\sqrt{p}}\sum_{j=1}^n(E_j-E_{j-1})q_{n1}^{(i)}\Big|^2\leq K_3,~~i=1,2,3,$$
which implies
$$E\Big|\frac{n}{\sqrt{p}}\sum_{j=1}^n(E_j-E_{j-1})q_{n1}\Big|^2=O(1).$$
So the proof of (\ref{m1}) is completed.

\subsection{Convergence of $M_n^{(2)}(z)$}

In this subsection, we will prove the convergence of $M_n^{(2)}(z)$ defined by \eqref{mn6}. In view of $\beta_1(z)=b_1(z)-\beta_{1}(z)b_1(z)\xi_1(z)$ and $\bbB\bbB^\top=\bbI_p$, it can be seen that
\begin{eqnarray}
&&\frac{n}{\sqrt p}E(\bar\bbx^\top\bbB^\top\bbA^{-1}_n(z)\bbB\bar\bbx)
=\frac{1}{\sqrt p}\sum_{i=1}^nE[\beta_i(z)\bbx_i^\top\bbB^\top\bbA_{ni}^{-1}(z)\bbB\bar\bbx]\nonumber\\
&=&\frac{1}{\sqrt p}\sum_{i=1}^nE[\beta_i(z)\bbx_i^\top\bbB^\top\bbA^{-1}_{ni}(z)\bbB\bar\bbx_i]
+\frac{1}{n\sqrt p}\sum_{i=1}^nE[\beta_i(z)\bbx_i^\top\bbB^\top\bbA_{ni}^{-1}(z)\bbB\bbx_i]\nonumber
\end{eqnarray}
\begin{eqnarray}
&=&\frac{1}{n\sqrt p}\sum_{i=1}^n b_1(z)E[\bbx_i^\top\bbB^\top\bbA_{ni}^{-1}(z)\bbB\bbx_i]
-\frac{1}{\sqrt p}\sum_{i=1}^n b_1(z)E[\beta_i(z)\xi_i(z)\bbx_i^\top\bbB^\top\bbA_{ni}^{-1}(z)\bbB\bar\bbx_i]\nonumber\\
&&-\frac{1}{n\sqrt p}\sum_{i=1}^n b_1(z)E[\beta_i(z)\xi_i(z)\bbx_i^\top\bbB^\top\bbA_{ni}^{-1}(z)\bbB\bbx_i]\nonumber\\
&=&\frac{b_1(z)}{\sqrt p}E(\text{tr}(\bbA_{n1}^{-1}(z)))+b_1(z)t_{n1}(z)+b_1(z)t_{n2}(z),\nonumber
\end{eqnarray}
where
$$t_{n1}(z)=-\frac{n}{\sqrt p}E(\beta_1(z)\xi_1(z)\bbx_1^\top\bbB^\top\bbA_{n1}^{-1}(z)\bbB\bar\bbx_1),$$
$$t_{n2}(z)=-\frac{1}{\sqrt p}E(\beta_1(z)\xi_1(z)\bbx_1^\top\bbB^\top\bbA_{n1}^{-1}(z)\bbB\bbx_1).$$
Write
$$t_{n1}(z)=t_{n1}^{(1)}(z)+t_{n2}^{(2)}(z),~~~t_{n2}(z)=t_{n2}^{(1)}(z)+t_{n2}^{(2)}(z),$$
where
$$t_{n1}^{(1)}(z)=-\frac{n}{\sqrt p}b_1(z)E(\xi_1(z)\bbx_1^\top\bbB^\top\bbA_{n1}^{-1}(z)\bbB\bar\bbx_1),$$
$$t_{n1}^{(2)}(z)=\frac{n}{\sqrt p}b_1(z)E(\beta_1(z)\xi_1^2(z)\bbx_1^\top\bbB^\top\bbA_{n1}^{-1}(z)\bbB\bar\bbx_1),$$
$$t_{n2}^{(1)}(z)=-\frac{1}{\sqrt p}b_1(z)E(\xi_1(z)\bbx_1^\top\bbB^\top\bbA_{n1}^{-1}(z)\bbB\bbx_1),$$
$$t_{n2}^{(2)}(z)=\frac{1}{\sqrt p}b_1(z)E(\beta_1(z)\xi^2_1(z)\bbx_1^\top\bbB^\top\bbA_{n1}^{-1}(z)\bbB\bbx_1).$$
Note that $|b_1(z)|\leq K_1$ for $z\in\mathcal C_n$. Then, by  \eqref{a6}, \eqref{a12} and H\"{o}lder's inequality, for $z\in \mathcal{C}_u$, we have
\begin{eqnarray}
|t_{n1}^{(2)}(z)|&=&\frac{n}{\sqrt p}|b_1(z)\|E[\beta_1(z)\xi_1^2(z)\bbx_1^\top\bbB^\top\bbA_{n1}^{-1}(z)\bbB\bar\bbx_1]|\nonumber\\
&\leq&\frac{K_2n}{\sqrt p}(\frac{p^3}{n^4}+\frac{p^2}{n^3}+\frac{1}{n^2})^{1/2}(\frac{p}{n})^{1/2}=O(\frac{p}{n})+O(n^{-1/2}).\nonumber
\end{eqnarray}
And by  \eqref{a6}, \eqref{a8}, (\ref{m3}), (\ref{ak1}), \eqref{wq3}, (\ref{ak2}) and H\"{o}lder's inequality, for $z\in \mathcal {C}_n$,
\begin{eqnarray}
&&|t_{n1}^{(2)}(z)|\nonumber\\
&=&\frac{n}{\sqrt p}|b_1(z)|E[\beta_1(z)\xi_1^2(z)\bbx_1^\top\bbB^\top\bbA_{n1}^{-1}(z)\bbB\bar\bbx_1
I(\|\tilde{\bbS}_{n1}\|< h_r~\text{and}~\lambda_{\min}(\tilde{\bbS}_{n1})> h_l)]|\nonumber\\
&&+\frac{n}{\sqrt p}|b_1(z)|E[\beta_1(z)\xi_1^2(z)\bbx_1^\top\bbB^\top\bbA_{n1}^{-1}(z)\bbB\bar\bbx_1
I(\|\tilde{\bbS}_{n1}\|\geq h_r~\text{or}~\lambda_{\min}(\tilde{\bbS}_{n1})\leq h_l)]|\nonumber\\
&\leq& K_3\frac{p}{n}+K_2\frac{n}{\sqrt p}(E\beta_1^4)^{\frac{1}{4}}(E\xi_1^8)^{\frac{1}{4}}(E(\bbx_1^\top\bbB^\top\bbA_{n1}^{-1}(z)\bbB\bar\bbx_1)^4)^{\frac{1}{4}}\nonumber\\
&&\times(P(\|\tilde{\bbS}_{n1}\|\geq h_r~\text{or}~\lambda_{\min}(\tilde{\bbS}_{n1})\leq h_l))^{\frac{1}{4}}\nonumber\\
&\leq&K_3\frac{p}{n}+K_2\frac{n}{\sqrt{p}}\frac{1}{n^2v^2}(p^3+p^2n+p^4)^{\frac{1}{4}}\frac{1}{v^2}(\frac{p^5}{n^8}+\frac{p^4}{n^5}+\frac{1}{n^4})^{\frac{1}{4}}\frac{1}{v}(\frac{p}{n})^{\frac{1}{4}}n^{-\frac{k}{4}}\nonumber\\
&\leq&K_3\frac{p}{n}+K_3n^{\frac{5}{2}+5\vartheta}(n^{\frac{1}{4}}+p^{\frac{1}{2}})p^{\frac{5}{4}}n^{-\frac{k}{4}}=o(1)\label{ak3}
\end{eqnarray}
providing $k\geq 20+20\vartheta$ and $0<\vartheta<1$.

For $z\in \mathcal{C}_u$, it follows from  \eqref{a5}, \eqref{a6c} and H\"{o}lder's inequality that
\begin{eqnarray}
|t_{n2}^{(2)}(z)|&=&\frac{1}{\sqrt p}|b_1(z)||E(\beta_1(z)\xi^2_1(z)\bbx_1^\top\bbB^\top\bbA_{n1}^{-1}(z)\bbB\bbx_1)|\nonumber\\
&\leq&\frac{K_1}{\sqrt p}(\frac{p^3}{n^4}+\frac{p^2}{n^3}+\frac{1}{n^2})^{1/2}(p+p^2)^{1/2}=O(\frac{p^{3/2}}{n^{3/2}})+O(\frac{p^{1/2}}{n}).\nonumber
\end{eqnarray}
And similarly to the proof of \eqref{ak3}, $|t_{n2}^{(2)}(z)|=o(1)$ for all $z\in \mathcal{C}_n$.
Note that
\begin{eqnarray}
t_{n1}^{(1)}(z)&=&-\frac{n}{\sqrt p}b_1(z)E\{\frac{1}{n}[\bbx_1^\top\bbB^\top\bbA_{n1}^{-1}\bbB\bbx_1-\text{tr}(\bbA_{n1}^{-1}(z))]\bbx_1^\top\bbB^\top\bbA_{n1}^{-1}(z)\bbB\bar\bbx_1\}\nonumber\\
&&-\frac{n}{\sqrt p}b_1(z)\frac{1}{n}E[\text{tr}(\bbA_{n1}^{-1}(z))-E\text{tr}(\bbA_{n1}^{-1}(z))]E[\bbx_1^\top\bbB^\top\bbA_{n1}^{-1}(z)\bbB\bar\bbx_1]\nonumber\\
&=&-\frac{n}{\sqrt p}b_1(z)E\{\frac{1}{n}[\bbx_1^\top\bbB^\top\bbA_{n1}^{-1}\bbB\bbx_1-\text{tr}(\bbA_{n1}^{-1}(z))]\bbx_1^\top\bbB^\top\bbA_{n1}^{-1}(z)\bbB\bar\bbx_1\}.\nonumber
\end{eqnarray}
Then, by  \eqref{a3} and \eqref{a12}, for $z\in \mathcal{C}_u$,
\begin{eqnarray}
|t_{n1}^{(1)}(z)|&\leq&\frac{n}{\sqrt p}|b_1(z)|\frac{1}{n}\sqrt{E(\bbx_1^\top\bbB^\top\bbA_{n1}^{-1}\bbB\bbx_1-\text{tr}(\bbA_{n1}^{-1}(z)))^2}
\sqrt{E(\bbx_1^\top\bbB^\top\bbA_{n1}^{-1}(z)\bbB\bar\bbx_1)^2}\nonumber\\
&=&O(\frac{\sqrt{p}}{\sqrt{n}}).\nonumber
\end{eqnarray}
For $t_{n2}^{(1)}(z)$, it is easy to see that
\begin{eqnarray}
&&t_{n2}^{(1)}(z)\nonumber\\
&=&-\frac{1}{\sqrt p}b_1(z)E\{\frac{1}{n}[\bbx_1^\top\bbB^\top\bbA_{n1}^{-1}\bbB\bbx_1-\text{tr}(\bbA_{n1}^{-1}(z))+\text{tr}(\bbA_{n1}^{-1}(z))
-E\text{tr}(\bbA_{n1}^{-1}(z))]\nonumber\\
&&\times[\bbx_1^\top\bbB^\top\bbA_{n1}^{-1}(z)\bbB\bbx_1]-\text{tr}(\bbA_{n1}^{-1}(z))
+\text{tr}(\bbA_{n1}^{-1}(z))\}\nonumber\\
&=&-\frac{1}{\sqrt p}b_1(z)\frac{1}{n}E[\bbx_1^\top\bbB^\top\bbA_{n1}^{-1}\bbB\bbx_1-\text{tr}(\bbA_{n1}^{-1}(z))]^2\nonumber\\
&&-\frac{1}{\sqrt p}b_1(z)\frac{1}{n}E[\text{tr}(\bbA_{n1}^{-1}(z))-E\text{tr}(\bbA_{n1}^{-1}(z))]^2,\nonumber
\end{eqnarray}
and hence, by  \eqref{a3} and \eqref{a5c},
\begin{equation}
|t_{n2}^{(1)}(z)|=O(\frac{1}{\sqrt p})\nonumber
\end{equation}
for $z\in \mathcal{C}_u$. So similarly to the proof of \eqref{ak3}, $|t_{n2}^{(1)}(z)|=o(1)$ for all $z\in \mathcal{C}_n$.
Thus, we have $|t_{n1}^{(1)}(z)|=o(1)$ and $|t_{n2}^{(1)}(z)|=o(1)$ for all $z\in \mathcal{C}_n$. Consequently, for all $z\in \mathcal{C}_n$,
\begin{equation}
\frac{n}{\sqrt p}E(\bar\bbx^\top\bbB^\top\bbA^{-1}_n(z)\bbB\bar\bbx)
=\frac{b_1(z)}{\sqrt p}E\text{tr}(\bbA_{n1}^{-1}(z))+o(1).\nonumber
\end{equation}
Let
$m(z)=\int \frac{1}{x-z}dH(x)$ and $H(x)=I(1\leq x)$ for $x\in \mathbb{R}$. We will show that for all $z\in \mathcal{C}_n$,
$$\sqrt{p}(\frac{E\text{tr}(\bbA_{n1}^{-1}(z))}{p}-m(z))=\sqrt{p}(\frac{E\text{tr}(\bbA_{n1}^{-1}(z))}{p}-\frac{1}{1-z})\to 0.$$
Similarly to the proof of \eqref{aa5},
\begin{eqnarray}
\bbA_{n1}^{-1}(z)&=&-(z\bbI_p-\frac{n-1}{n}\bbI_p)^{-1}\nonumber\\
&&+\frac{1}{n}\sum_{i\neq 1}\beta_{i1}(z)(z\bbI_p-\frac{n-1}{n}\bbI_p)^{-1}\bbB\bbx_i\bbx_i^\top\bbB^\top\bbA^{-1}_{ni1}(z)\nonumber\\
&&-\frac{n-1}{n}(z\bbI_p-\frac{n-1}{n}\bbI_p)^{-1}\bbA^{-1}_{n1}(z)\nonumber\\
&=&-(z\bbI_p-\frac{n-1}{n}\bbI_p)^{-1}+\bbC(z)+\bbD(z)+\bbE(z),~~~~~~~~~\nonumber
\end{eqnarray}
where
$$\bbC(z)=\sum_{i\neq 1}(z\bbI_p-\frac{n-1}{n}\bbI_p)^{-1}(\frac{1}{n}\bbB\bbx_i\bbx_i^\top\bbB^\top-\frac{1}{n}\bbI_p)\bbA^{-1}_{ni1}(z),$$
$$\bbD(z)=\frac{1}{n}\sum_{i\neq 1}(\beta_{i1}(z)-b_1(z))(z\bbI_p-\frac{n-1}{n}\bbI_p)^{-1}\bbB\bbx_i\bbx_i^\top\bbB^\top\bbA^{-1}_{ni1}(z),$$
$$\bbE(z)=\frac{1}{n}b_1(z)(z\bbI_p-\frac{n-1}{n}\bbI_p)^{-1}\sum_{i\neq 1}(\bbA^{-1}_{ni1}(z)-\bbA^{-1}_{n1}(z)).$$
On the one hand,
$$E\text{tr}(\bbC(z))=0.$$
On the other hand, from the proofs of \eqref{aa7} and \eqref{aa8},
$$\frac{E|\text{tr}(\bbD(z))|}{\sqrt{p}}=O(\sqrt{p/n})=o(1),~~~\frac{E|\text{tr}(\bbE(z))|}{\sqrt{p}}=O(\frac{1}{\sqrt{p}})=o(1).$$
Thus,
\begin{eqnarray}
\sqrt{p}(\frac{E\text{tr}(\bbA_{n1}^{-1}(z))}{p}-m(z))&=&\sqrt{p}(\frac{E\text{tr}(\bbA_{n1}^{-1}(z))}{p}-\frac{1}{1-z})\nonumber\\
&=&\sqrt{p}(-\text{tr}\frac{(z\bbI_p-\frac{n-1}{n}\bbI_p)^{-1}}{p}-\frac{1}{1-z})+o(1)\nonumber\\
&=&\frac{\sqrt{p}}{n}\frac{1}{(\frac{n-1}{n}-z)(1-z)}+o(1)=o(1).\nonumber
\end{eqnarray}
Combining $b_1(z)=1+O(p/n)$, we obtain that
\begin{eqnarray}
&&\frac{n}{\sqrt p}E(\bar\bbx^\top\bbB^\top\bbA^{-1}_n(z)\bbB\bar\bbx)
=\frac{b_1(z)}{\sqrt p}E\text{tr}(\bbA^{-1}_{n1}(z))+o(1)\nonumber\\
&=&\frac{n}{\sqrt{p}}\frac{p}{n}\frac{E\text{tr}(\bbA^{-1}_{n1}(z))}{p}+o(1)=\frac{n}{\sqrt p} c_nm(z)+o(1).\nonumber
\end{eqnarray}
Consequently, for all $z\in \mathcal{C}_n$,
\begin{equation}
\sup\limits_{z\in \mathcal{C}_n}M_n^{(2)}(z)=\sup\limits_{z\in \mathcal{C}_n}\frac{n}{\sqrt{p}}\Big(E(\bar\bbx^\top\bbB^\top\bbA^{-1}_n(z)\bbB\bar\bbx)-c_nm(z)\Big)\longrightarrow0 \label{mn4}
\end{equation}
as $(p,n)\rightarrow\infty$.

\subsection{Some results on truncated random variables}

Let $$\bbX_n=(\bbx_1,\ldots,\bbx_n),~~\bbB=\boldsymbol{\Sigma}_p^{-1/2}\bbU\bbT=(b_{ij})_{p\times m}=(\bbb_1,\ldots,\bbb_m),$$$$\bbb_j=(b_{1j},\ldots,b_{pj})^\top,~\|\bbb_j\|=\sqrt{\sum\limits_{i=1}^p b_{ij}^2},1\leq j\leq m.$$
In view of $\bbB\bbB^\top=\bbI_p$, it is easy to check that
\begin{equation}
\sum\limits_{j=1}^m\|\bbb_j\|^2=\sum_{j=1}^m\sum_{i=1}^p b_{ij}^2=\|\bbB\|_{F}^2=\text{tr}(\bbB^\top\bbB)=\text{tr}(\bbB\bbB^\top)\leq p,\label{o2}
\end{equation}
and
\begin{eqnarray}
\|\bbb_j\|^2=\bbb_j^\top\bbb_j=\|\bbb_j\bbb_j^\top\|\leq \|\sum_{j=1}^m\bbb_j\bbb_j^\top\|=\|\bbI_p\|=1,~1\leq j\leq m.\label{jk1}
\end{eqnarray}
Since $EX_{11}^4<\infty$, it has
\begin{equation}
\lim\limits_{(p,n)\rightarrow\infty}\sup\limits_{1\leq i\leq m}E[|X_{11}|^4I(|X_{11}|>(np)^{1/4}/\|\bbb_i\|)]=0,\label{lm5}
\end{equation} as $(p,n)\rightarrow\infty$. In fact, by (\ref{jk1}) and $EX_{11}^4<\infty$,
\begin{eqnarray}
0&\leq& \lim\limits_{(p,n)\rightarrow\infty}\sup\limits_{1\leq i\leq m}E[|X_{11}|^4I(|X_{11}|>(np)^{1/4}/\|\bbb_i\|)]\nonumber\\
&\leq&
\lim\limits_{(p,n)\rightarrow\infty}E[|X_{11}|^4I(|X_{11}|>(np)^{1/4})]=0.\nonumber
\end{eqnarray}
Denote
$$\hat{X}_{ij}=X_{ij}I\Big(|X_{ij}|\leq \frac{(pn)^{1/4}}{\|\bbb_i\|}\Big)-EX_{ij}I\Big(|X_{ij}|\leq \frac{(pn)^{1/4}}{\|\bbb_i\|}\Big),~~\widehat\bbX_n=(\hat X_{ij}).$$
Then, by (\ref{o2}), (\ref{jk1}) and (\ref{lm5})
\begin{eqnarray}
P(\bbX_n\neq \widehat{\bbX}_n)&\leq& P\Big(\bigcup\limits_{i,j}(|X_{ij}|>(np)^{1/4}/\|\bbb_i\|)\Big)\nonumber\\
&\leq &\sum_{i=1}^m\sum_{j=1}^nP\Big(|X_{ij}|>(np)^{1/4}/\sqrt{\|\bbb_i\|}\Big)\nonumber\\
&\leq&\frac{1}{ np}\sum_{i=1}^m\sum_{j=1}^n\|\bbb_i\|^4E(|X_{ij}|^4I(|X_{ij}|>(np)^{1/4}/\|\bbb_i\|))\nonumber\\
&\leq&\frac{1}{ np}\sum_{i=1}^m\sum_{j=1}^n\|\bbb_i\|^2\sup\limits_{1\leq i\leq m}E[|X_{11}|^4I(|X_{11}|>(np)^{1/4}/\|\bbb_i\|)]\nonumber\\
&\leq&\sup\limits_{1\leq i\leq m}E[|X_{11}|^4I(|X_{11}|>(np)^{1/4}/\|\bbb_i\|)]\rightarrow 0\label{lm6}
\end{eqnarray}
as $(p,n)\rightarrow\infty$. Denote
$$\tilde{X}_{ij}=X_{ij}I\Big(|X_{ij}|>\frac{(pn)^{1/4}}{\|\bbb_i\|}\Big)-EX_{ij}I\Big(|X_{ij}|>\frac{(pn)^{1/4}}{\|\bbb_i\|}\Big),$$
$$\widetilde{\bbX}_n=\bbX_n-\widehat\bbX_n=(\tilde X_{ij}).$$
Let
$$\sigma_n=\sqrt{E|\hat X_{11}|^2}, ~~~\check{\tilde{\bbS}}_n=\frac{1}{n\sigma_n^2}\bbB\widehat\bbX_n\widehat\bbX_n^\top\bbB^\top,~~~\check{\bbA}_n^{-1}(z)=(\check{\tilde{\bbS}}_n-z\bbI_p)^{-1}.$$
In addition, denote $$\bar{\check{\bbx}}=\frac{1}{n}\sum_{j=1}^n\check{\bbx}_j,$$
where $\check\bbx_j$ is the $j$th column of the matrix $\frac{1}{n}\widehat\bbX_n$.

\textbf{Lemma A.6} {\it
 Assume that $\{X_{ij},1\leq i\leq m,1\leq j\leq n\}$ is an $i.i.d.$ random array sequence with $EX_{11}=0$, $EX_{11}^2=1$ and $EX_{11}^4<\infty$. Let $p/n=O(n^{-\eta})$ for some $\eta\in (0,1)$. Then for $z\in \mathcal{C}_n^{+}$, we have that
\begin{equation}
\frac{n}{\sqrt{p}}(\bar\bbx^\top\bbB^\top\bbA_{n}^{-1}(z)\bbB\bar\bbx-\bar{\check{\bbx}}^\top\bbB^\top\check\bbA_{n}^{-1}(z)\bbB\bar{\check\bbx})\xrightarrow{P}0.\label{kl1}
\end{equation}
Moreover,
\begin{equation}
\frac{n}{\sqrt{p}}(\bar\bbx^\top\bbB^\top\bbB\bar\bbx-\bar{\check{\bbx}}^\top\bbB^\top\bbB\bar{\check\bbx})\xrightarrow{P}0.\label{kl2}
\end{equation}
}
\textbf{Proof of Lemma A.6}. It can be seen that
\begin{eqnarray}
&&\frac{n}{\sqrt{p}}(\bar\bbx^\top\bbB^\top\bbA_{n}^{-1}(z)\bbB\bar\bbx-\bar{\check{\bbx}}^\top\bbB^\top\check\bbA_{n}^{-1}(z)\bbB\bar{\check\bbx})\nonumber\\
&=&\frac{n}{\sqrt{p}}(\bar\bbx^\top\bbB^\top\bbA_{n}^{-1}(z)\bbB\bar\bbx-\bar{\check{\bbx}}^\top\bbB^\top\bbA_{n}^{-1}(z)\bbB\bar{\bbx})
\nonumber\\
&&+\frac{n}{\sqrt{p}}(\bar{\check{\bbx}}^\top\bbB^\top\bbA_{n}^{-1}(z)\bbB\bar{\bbx}-\bar{\check{\bbx}}^\top\bbB^\top\check\bbA_{n}^{-1}(z)\bbB\bar{\bbx})
\nonumber\\
&&+\frac{n}{\sqrt{p}}(\bar{\check{\bbx}}^\top\bbB^\top\check\bbA_n^{-1}(z)\bbB\bar{\bbx}-\bar{\check{\bbx}}^\top\bbB^\top\check\bbA_n^{-1}(z)\bbB\bar{\check\bbx})\nonumber\\
&:=&u_{n1}+u_{n2}+u_{n3}.\nonumber
\end{eqnarray}
where
$$u_{n1}=\frac{n}{\sqrt{p}}[(\bar\bbx-\bar{\check{\bbx}})^\top\bbB^\top\bbA_n^{-1}(z)\bbB\bar{\bbx}],
~~~u_{n2}=\frac{n}{\sqrt{p}}[\bar{\check{\bbx}}^\top\bbB^\top(\bbA_n^{-1}(z)-\check\bbA_n^{-1}(z))\bbB\bar{\bbx}]$$
and
$$u_{n3}=\frac{n}{\sqrt{p}}[\bar{\check{\bbx}}^\top\bbB^\top\check\bbA_n^{-1}(z)\bbB(\bar{\bbx}-\bar{\check\bbx})].$$

First, we consider $u_{n1}$ on $\mathcal{C}_u$. Obviously,
\begin{eqnarray}
|u_{n1}|&\leq& \frac{n}{\sqrt{p}}\|(\bar\bbx-\bar{\check{\bbx}})^\top\bbB^\top\|\|\bbA_n^{-1}(z)\|\|\bbB\bar{\bbx}\|\nonumber\\
&\leq&\frac{n}{\sqrt{p}v_0}\|(\bar\bbx-\bar{\check{\bbx}})^\top\bbB^\top\|\|\bbB\bar{\bbx}\|\nonumber\\
&\leq&\frac{n}{\sqrt{p}v_0}|1-\frac{1}{\sigma_n}\||\bbB\bar{\bbx}\|^2+\frac{n}{\sqrt{p}v_0}\frac{1}{\sigma_n}\|\bbB\bar{\tilde\bbx}\|\|\bbB\bar{\bbx}\|,\nonumber
\end{eqnarray}
by the fact that $\bar\bbx-\bar{\check\bbx}=(1-\frac{1}{\sigma_n})\bar\bbx+\frac{1}{\sigma_n}\bar{\tilde\bbx}$ with $\bar{\tilde\bbx}=\frac{1}{n}\sum\nolimits_{j=1}^n\tilde{\bbx}_j$ and $\tilde\bbx_j$ being the $j$th column of $\widetilde{\bbX}_n$.
It follows from $EX_{11}=0$, $EX_{11}^2=1$, (\ref{o2}), (\ref{lm5}) and $m\geq p$ that
\begin{eqnarray}
1-\sigma_n^2&=&\frac{1}{m}\sum_{i=1}^m(EX_{11}^2-\sigma_n^2)\nonumber\\
&\leq& \frac{2}{m}\sum_{i=1}^mE[|X_{11}|^2I(|X_{11}|> (pn)^{1/4}/\|\bbb_i\|)]\nonumber\\
&\leq& \frac{2}{m(pn)^{1/2}}\sum_{i=1}^m\|\bbb_i\|^2E[|X_{11}|^4I(|X_{11}|> (pn)^{1/4}/\|\bbb_i\|)]\nonumber\\
&\leq& \frac{p}{m}\frac{2}{(pn)^{1/2}}\sup_{1\leq i\leq m}E[|X_{11}|^4I(|X_{11}|> (pn)^{1/4}/\sqrt{\|\bbb_i\|})]\nonumber\\
&=&O(\frac{1}{(np)^{1/2}}),\nonumber
\end{eqnarray}
which yields
\begin{equation}
\frac{n}{\sqrt p}|1-\frac{1}{\sigma_n}|=\frac{n}{\sqrt p}\frac{|\sigma_n^2-1|}{\sigma_n(1+\sigma_n)}=O(\frac{\sqrt{n}}{p}).
\end{equation}
In addition, by $E\tilde{X}_{ij}=0$, $\text{Cov}(\tilde{\bbx}_{1})=E\tilde{X}_{11}^2\bbI_m$, (\ref{o2}), (\ref{jk1}) and (\ref{lm5}),
\begin{eqnarray}
E\|\bbB\bar{\tilde{\bbx}}\|^2&=&E\bar{\tilde{\bbx}}^\top\bbB^\top\bbB\bar{\tilde{\bbx}}\nonumber\\
&=&\frac{1}{n^2}\Big[\sum_{j=1}^nE\tilde{\bbx}_j^\top\bbB^\top\bbB\tilde{\bbx}_j+2\sum_{1\leq i<j\leq n}E\tilde{\bbx}_i^\top\bbB^\top\bbB E\tilde{\bbx}_j\Big]\nonumber\\
&=&\frac{1}{n^2}\sum_{j=1}^nE\tilde{\bbx}_j^\top\bbB^\top\bbB\tilde{\bbx}_j=\frac{1}{n}E\tilde{\bbx}_1^\top\bbB^\top\bbB\tilde{\bbx}_1\nonumber\\
&=&\frac{1}{n}\text{tr}(\bbB^\top\bbB E\tilde{\bbx}_1\tilde{\bbx}_1^\top)=\frac{1}{n}\sum_{i=1}^m\|\bbb_i\|^2E\tilde{X}_{ii}^2\nonumber\\
&\leq&\frac{2}{n}\sum_{i=1}^m\|\bbb_i\|^4EX_{11}^2I(|X_{11}|>(np)^{1/4}/\|\bbb_i\|)\nonumber\\
&\leq&\frac{2}{n\sqrt{np}}\sum_{i=1}^m\|\bbb_i\|^2EX_{11}^4I(|X_{11}|>(np)^{1/4}/\|\bbb_i\|)\nonumber\\
&=&O(\frac{p^{1/2}}{n^{3/2}}),\nonumber
\end{eqnarray}
which yields
\begin{equation}
\|\bbB\bar{\tilde{\bbx}}\|=O_P(\frac{p^{1/4}}{n^{3/4}}).
\end{equation}

In addition, by  \eqref{wa1}, one has $\|\bbB\bar{\bbx}\|^2=O_P(\frac{p}{n})$ and  $\|\bbB\bar{\bbx}\|=O_P(\sqrt{\frac{p}{n}})$. So, by $p/n=o(1)$,
\begin{equation}
|u_{n1}|=O(\frac{\sqrt{n}}{p})O_P(\frac{p}{n})+\frac{n}{\sqrt{p}}O_P(\frac{p^{1/4}}{n^{3/4}})O_P(\sqrt{\frac{p}{n}})=O_P(n^{-1/2})+O_P(\frac{p^{1/4}}{n^{1/4}})=o_P(1)\nonumber
\end{equation}
uniformly on $\mathcal{C}_u$.

Second, we analyze $u_{n2}$. In view of $EX_{ij}=0$, $\bbX_n-\widehat{\bbX}_n=\widetilde{\bbX}_n$, $$\bbX_n-\frac{1}{\sigma_n}\widehat{\bbX}_n=\bbX_n-\frac{1}{\sigma_n}(\bbX_n-\widetilde{\bbX}_n)=(1-\frac{1}{\sigma_n})\bbX_n+\frac{1}{\sigma_n}\widetilde{\bbX}_n,$$
and
\begin{eqnarray}
\bbA_n(z)-\check{\bbA}_n(z)&=&\tilde{\bbS}_n-\check{\tilde{\bbS}}_n=\frac{1}{n}\bbB\bbX_n\bbX_n^\top\bbB^\top-\frac{1}{n}\frac{1}{\sigma_n}\bbB\widehat{\bbX}_n\widehat{\bbX}_n^\top\bbB^\top\nonumber\\
&=&\frac{1}{n}\bbB(\bbX_n-\frac{1}{\sigma_n}\widehat{\bbX}_n)\bbX_n^\top\bbB^\top
+\frac{1}{n}\frac{1}{\sigma_n}\bbB\widehat{\bbX}_n(\bbX_n-\widehat{\bbX}_n)^\top\bbB^\top\nonumber\\
&=&\frac{1}{n}(1-\frac{1}{\sigma_n})\bbB\bbX_n\bbX_n^\top\bbB^\top
+\frac{1}{n}\frac{1}{\sigma_n}\bbB\widetilde{\bbX}_n\bbX_n^\top\bbB^\top
+\frac{1}{n}\frac{1}{\sigma_n}\bbB\widehat{\bbX}_n\widetilde{\bbX}_n^\top\bbB^\top,\nonumber
\end{eqnarray}
we have that
\begin{eqnarray}
|u_{n2}|&\leq &\frac{n}{\sqrt p}\|\bar{\check{\bbx}}^\top\bbB^\top\|\|\bbA_n^{-1}(z)-\check{\bbA}_n^{-1}(z)\|\|\bbB\bar\bbx\|\nonumber\\
&\leq& \frac{n}{v_0^2\sqrt p}\|\bar{\check{\bbx}}^\top\bbB^\top\|\|\bbA_n(z)-\check{\bbA}_n(z)\|\|\bbB\bar\bbx\|\nonumber\\
&\leq&\frac{\|\bar{\check{\bbx}}^\top\bbB^\top\|\|\bbB\bar\bbx\|}{v_0^2\sqrt p}|1-\frac{1}{\sigma_n}\||\bbB\bbX_n\|\|\bbX_n^\top\bbB^\top\|\nonumber\\
&&+\frac{\|\bar{\check{\bbx}}^\top\bbB^\top\|\|\bbB\bar\bbx\|}{v_0^2\sqrt p}\frac{1}{\sigma_n}\|\bbB\widetilde{\bbX}_n\|\|\bbX_n^\top\bbB^\top\|
\nonumber\\
&&+\frac{\|\bar{\check{\bbx}}^\top\bbB^\top\|\|\bbB\bar\bbx\|}{v_0^2\sqrt p}\frac{1}{\sigma_n}\|\bbB\widehat{\bbX}_n\|\|\widetilde{\bbX}_n^\top\bbB^\top\|.\nonumber
\end{eqnarray}
It follows from  \eqref{wa1} that $\|\bbB\bar\bbx\|=O_P(\sqrt{\frac{p}{n}})$ and $\|\bbB\bar{\check{\bbx}}\|=O_P(\sqrt{\frac{p}{n}})$. Meanwhile,
by Lemma A.5 with $p/n=O(n^{-\eta})$ for some $\eta\in (0,1)$, we establish that $\frac{\|\bbB\bbX_n\|}{\sqrt{n}}\xrightarrow{P}\sqrt{\|\boldsymbol{\Sigma}_p\|}$ and $\frac{\|\bbB\widehat{\bbX}_n\|}{\sqrt{n \sigma_n^2}}\xrightarrow{P}\sqrt{\|\boldsymbol{\Sigma}_p\|}$. Similarly, $\|\bbB\widetilde{\bbX}_n\|/\sqrt{ nE\tilde{X}_{11}^2}\xrightarrow{P}\sqrt{\|\boldsymbol{\Sigma}_p\|}$.
In addition, by (\ref{o2}), (\ref{lm5}) and $m\geq p$,
\begin{eqnarray}
nE\tilde{X}_{11}^2&\leq&\frac{2n}{m}\sum_{i=1}^mEX_{11}^2I(|X_{11}|>\sqrt[4]{np}/\|\bbb_i\|)\nonumber\\
&\leq& \frac{2n}{m(np)^{1/2}}\sum_{i=1}^m\|\bbb_i\|^2EX_{11}^4I(|X_{11}|>\sqrt[4]{np}/\|\bbb_i\|)\nonumber\\
&\leq&\frac{2p}{m}\frac{n}{(np)^{1/2}}\sup_{1\leq i\leq m}EX_{11}^4I(|X_{11}|>\sqrt[4]{np}/\|\bbb_i\|)\nonumber\\
&=&O(\frac{n^{1/2}}{p^{1/2}})=O(\sqrt{\frac{n}{p}}),
\end{eqnarray}
which yields $\|\bbB\widetilde{\bbX}_n\|=O_P(\sqrt[4]{\frac{n}{p}})$. Consequently, by $p/n=o(1)$,
\begin{eqnarray}
|u_{n2}|&=&O(\frac{1}{\sqrt{p}})O_P(\frac{p}{n})\{O(\frac{1}{\sqrt{pn}})O_P(n)+O_P(\sqrt[4]{\frac{n}{p}})O_P(\sqrt{n})\}\nonumber\\
&=&O_P(\frac{1}{n^{1/2}})+O_P(\frac{p^{1/4}}{n^{1/4}})=o_P(1).\nonumber
\end{eqnarray}
Thus, $u_{n2}$ converges in probability to zero uniformly on $\mathcal{C}_u$. Similarly to $u_{n1}$, $u_{n3}$ also converges in probability to zero uniformly on $\mathcal{C}_u$.

Moreover, for $z\in \mathcal{C}_l$ or $z\in \mathcal{C}_r$, we apply Lemma A.5 and obtain that
$$\lim\limits_{n\rightarrow\infty}\min(u_r-\lambda_{\max}(\tilde{\bbS}_n),\lambda_{\min}(\tilde{\bbS}_n)-u_l)>0,~~\text{in probability},$$
$$\lim\limits_{n\rightarrow\infty}\min(u_r-\lambda_{\max}(\check{\tilde{\bbS}}_n),\lambda_{\min}(\check{\tilde{\bbS}}_n)-u_l)>0,~~\text{in probability}.$$

Therefore, the above argument for $u_{nj}$, $j=1,2,3$ for $z\in \mathcal{C}_u$ also holds for $z\in \mathcal{C}_l$ and $\mathcal{C}_r$. Thus, (\ref{kl1}) is completely proved. In addition, the above argument for (\ref{kl1}) also works for (\ref{kl2}).~~~$\square$

\subsection{The proofs of  \eqref{a5c}-\eqref{a11}}

As a consequence of Lemma A.6, for $1\leq i\leq m$ and $1\leq j\leq n$, we assume that $EX_{ij}=0$, $EX_{ij}^2=1$ and
\begin{equation}
|X_{ij}|\leq \frac{(np)^{1/4}}{\|\bbb_i\|},\lim\limits_{(p,n)\to\infty}\sup\limits_{1\leq i\leq m}E[|X_{11}|^4I(|X_{11}|>\frac{(np)^{1/4}}{\|\bbb_i\|})]=o(1).\label{a4}
\end{equation}
We use the notion $\bbA_{nj}^{-1}(z)$, $\bbA_{nij}^{-1}(z)$, $\bbD_{nj}(z)$, $\beta_j(z)$, $\beta_j^{\text{tr}}(z)$, $b_1(z)$, $\gamma_j(z)$, $\xi_j(z)$, $\alpha_j(z)$, $\beta_{ij}(z)$, $\beta_{ij}^{\text{tr}}(z)$, $b_{12}(z)$, $\gamma_{ij}(z)$, $\xi_{ij}(z)$ defined in Section 5.
Let $\|\bbA\|$ denote the spectral normal of matrix $\bbA$. For $k\geq 2$ and $\bbx_1=(X_{11},\cdots,X_{m1})^\top$, by $\bbB\bbB^\top=\bbI_p$, $\|\bbC\|=O(1)$, (\ref{o2}), (\ref{jk1}), (\ref{a4}) and Lemma A.3, it has
\begin{eqnarray}
E|\gamma_1(z)|^k&=&E|\frac{1}{n}\bbx_1^\top\bbB^\top\bbC\bbB\bbx_1-\frac{1}{n}\text{tr}(\bbC)|^k=O(\frac{p^{k/2}}{n^k}),~~1<k\leq 2,\nonumber\\
E|\gamma_1(z)|^k&=&O(\frac{p^{k/2+1}}{n^k})+O(\frac{p^{k/2}}{n^{k/2+1}}),~k> 2\nonumber
\end{eqnarray}
i.e.  \eqref{a5c} and \eqref{a5} hold.

By $C_r$'s inequality, \eqref{a3} and \eqref{a5c},
\begin{eqnarray}
E|\xi_1(z)|^2=E|\frac{1}{n}\bbx_1^\top\bbB^\top\bbA_{n1}^{-1}(z)\bbB\bbx_1-\frac{1}{n}E\text{tr}(\bbA_{n1}^{-1}(z))|^2=O(n^{-1}), \nonumber\\
E|\xi_1(z)|^k=O(\frac{p^{k/2+1}}{n^k})+O(\frac{p^{k/2}}{n^{k/2+1}})+O(n^{-k/2})~k>2,\nonumber
\end{eqnarray}
i.e.  \eqref{a6c}and \eqref{a6} hold.

Obviously, by $\bbB\bbB^\top=\bbI_p$, $\|\bbC\|=O(1)$, $\|\bbD\|=O(1)$, the proof of \eqref{qua1} and \eqref{qua2} in Lemma A.3,
\begin{eqnarray}
&&n^{-2}E|\bbx_1^\top\bbB^\top\bbC\bbe_i\bbe_j^\top\bbD\bbB\bbx_1|^2\nonumber\\
&\leq&\frac{C_1}{n^2}[E|\bbx_1^\top\bbB^\top\bbC\bbe_i\bbe_j^\top\bbD\bbB\bbx_1-\text{tr}(\bbB^\top\bbC\bbe_i\bbe_j^\top\bbD\bbB)|^2
+E|\bbe_j^\top\bbD\bbB\bbB^\top\bbC\bbe_i|^2]\nonumber\\
&=&O(\frac{1}{n^{2}}),\label{y1wa}
\end{eqnarray}
and
\begin{eqnarray}
&&n^{-k}E|\bbx_1^\top\bbB^\top\bbC\bbe_i\bbe_j^\top\bbD\bbB\bbx_1|^k\nonumber\\
&\leq&\frac{C_1}{n^k}[E|\bbx_1^\top\bbB^\top\bbC\bbe_i\bbe_j^\top\bbD\bbB\bbx_1-\text{tr}(\bbB^\top\bbC\bbe_i\bbe_j^\top\bbD\bbB)|^k
+E|\bbe_j^\top\bbD\bbB\bbB^\top\bbC\bbe_i|^k]\nonumber\\
&=&O(\frac{(np)^{k/2-1}}{n^{k}})=O(\frac{p^{k/2-1}}{n^{k/2+1}}),~k>2.\label{y1w}
\end{eqnarray}
So \eqref{a7} follows from \eqref{y1wa} and \eqref{y1w}. Then, by $\|\bbC\|=O(1)$,
\begin{eqnarray}
&&E|\bbx_1^\top\bbB^\top\bbC\bbB\bar\bbx_1|^2\nonumber\\
&=&E|\text{tr}(\bbC\bbB\bar\bbx_1\bbx_1^\top\bbB^\top)|^2=E(\bbx_1^\top\bbB^\top\bbC\bbB\bar\bbx_1)^2\nonumber\\
&=&\frac{1}{n^2}E\Big(\sum_{j=2}^n\bbx_1^\top\bbB^\top\bbC\bbB\bbx_j\Big)^2\nonumber\\
&=&\frac{1}{n^2}\Big[\sum_{j=2}^nE\bbx_1^\top\bbB^\top\bbC\bbB\bbx_j\bbx_j^\top\bbB^\top\bbC^\top\bbB\bbx_1+2\sum_{2\leq j<k\leq n}E\bbx_1^\top\bbB^\top\bbC\bbB\bbx_j\bbx_k^\top\bbB^\top\bbC^\top\bbB\bbx_1\Big]\nonumber\\
&=&\frac{1}{n^2}\sum_{j=2}^nE\text{tr}(\bbx_j\bbx_j^\top\bbB^\top\bbC\bbB\bbx_1\bbx_1^\top\bbB^\top\bbC^\top\bbB)\nonumber\\
&&+\frac{2}{n^2}\sum_{2\leq j<k\leq n}E\text{tr}(\bbx_j\bbx_k^\top\bbB^\top\bbC\bbB\bbx_1\bbx_1^\top\bbB^\top\bbC^\top\bbB)\nonumber\\
&=&\frac{1}{n^2}\sum_{j=2}^n\text{tr}(E(\bbx_j\bbx_j^\top\bbB^\top\bbC\bbB)E(\bbx_1\bbx_1^\top\bbB^\top\bbC^\top\bbB))\nonumber\\
&&+\frac{2}{n^2}\sum_{2\leq j<k\leq n}\text{tr}(E\bbx_jE\bbx_k^\top\bbB^\top\bbC\bbB E(\bbx_1\bbx_1^\top\bbB^\top\bbC^\top\bbB))\nonumber\\
&=&\frac{1}{n^2}\sum_{j=2}^n\text{tr}(\bbB^\top\bbC\bbB \bbB^\top\bbC^\top\bbB)=\frac{1}{n^2}\sum_{j=2}^n\text{tr}(\bbB\bbB^\top\bbC\bbB\bbB^\top\bbC^\top)\nonumber\\
&=&\frac{C_1}{n^2}\sum_{j=2}^n\text{tr}(\bbC\bbC^\top)=O(\frac{p}{n}),\nonumber
\end{eqnarray}
i.e. \eqref{a12} holds. Now we consider  \eqref{a10}. For $\bbx_1=(X_{11},\ldots,X_{m1})^\top$, $\bbx_2=(X_{12},\ldots,X_{m2})^\top$, by (\ref{jk1}), (\ref{o2}), (\ref{a4}), $\|\bbC\|=O(1)$ and \eqref{qub2} in Lemma A.3, it is easy to obtain that
\begin{eqnarray}
E|\bbx_1^\top\bbB^\top\bbC\bbB\bbx_2|^k=O(p^{k/2+1}+n^{k/2-2}p^{k/2})=O((np)^{k/2-1}),~k\geq 4\nonumber
\end{eqnarray}
with $p\leq n$ . We turn to prove \eqref{a14}. For $k\geq 4$, it can be checked that
\begin{eqnarray}
E|\bar\bbx_1^\top\bbB^\top\bbB\bar\bbx_1|^k&\leq& \frac{C_1}{n^{2k}}\Big[E\Big|\sum_{i=2}^n\bbx_i^\top \bbB^\top\bbB\bbx_i\Big|^k+E\Big|\sum_{i\neq j,i>1,j>1}\bbx_{i}^\top \bbB^\top\bbB\bbx_{j}\Big|^k\Big]\nonumber\\
&=&O(\frac{p^k}{n^k}+\frac{p^{k/2}}{n^{k/2}}\frac{1}{p})=O(\frac{p^{k/2}}{n^{k/2}}).\label{ww1}
\end{eqnarray}
In fact, by Lemma A.1, $C_r$'s inequality, $\bbB\bbB^\top=\bbI_p$ ,  \eqref{a5c} and \eqref{a5}, we have that
\begin{eqnarray}
&&E\Big|\sum_{i=2}^n\bbx_i^\top\bbB^\top\bbB\bbx_i\Big|^k\nonumber\\
&\leq&C_1E\Big|\sum_{i=2}^n(\bbx_i^\top\bbB^\top\bbB\bbx_i-E(\bbx_i^\top\bbB^\top\bbB\bbx_i))\Big|^k+
C_2\Big|\sum_{i=2}^nE(\bbx_i^\top\bbB^\top\bbB\bbx_i)\Big|^k\nonumber\\
&\leq&C_3\Big(\sum_{i=2}^nE(\bbx_i^\top\bbB^\top\bbB\bbx_i-E(\bbx_i^\top\bbB^\top\bbB\bbx_i))^2\Big)^{k/2}\nonumber\\
&&+C_4\sum_{i=2}^nE|\bbx_i^\top\bbB^\top\bbB\bbx_i-E(\bbx_i^\top\bbB^\top\bbB\bbx_i)|^{k}+C_5(np)^k\nonumber\\
&\leq&C_6\Big(\sum_{i=2}^nE(\bbx_i^\top\bbB^\top\bbB\bbx_i)^2\Big)^{k/2}+K_7\sum_{i=2}^nE|\bbx_i^\top\bbB^\top\bbB\bbx_i|^{k}+K_5(np)^k\nonumber\\
&\leq&C_6\Big(\sum_{i=2}^n[E(\bbx_i^\top\bbB^\top\bbB\bbx_i-\text{tr}\bbI_p)^2+(\text{tr}\bbI_p)^2]\Big)^{k/2}\nonumber\\
&&+C_7\sum_{i=2}^n[E|\bbx_i^\top\bbB^\top\bbB\bbx_i-\text{tr}\bbI_p|^k+|\text{tr}\bbI_p|^k]+K_5(np)^k\nonumber\\
&\leq&C_8[(n(p+p^2))^{k/2}+n(p^{k/2+1}+p^{k/2}n^{k/2-1}+p^k)+(np)^k]\nonumber\\
&=&O((np)^{k}).\nonumber
\end{eqnarray}
We introduce the random variables $\xi_j=\sum_{i=2}^{j-1}\bbx_i^\top\bbB^\top\bbB\bbx_j$ for $j=3,\ldots,n$. Then it is straightforward to check that
$$E[\xi_j|\mathcal{G}_{j-1}]=0,$$
where $\mathcal{G}_{j}=\sigma(\bbx_2,\ldots,\bbx_j)$, $j=3,\ldots,n$. This means that $\{\xi_j,\mathcal{G}_j,j\geq 3\}$ are martingale differences.
Thus, we have by Lemma A.1, \eqref{a5} and \eqref{a10} that, for $k\geq 4$,
\begin{eqnarray*}
&&E\Big|\sum_{2\leq i\neq j\leq n}\bbx_{i}^\top\bbB^\top\bbB\bbx_{j}\Big|^k=E\Big|2\sum_{j=3}^n\xi_j\Big|^k\\
&\leq&C_1\Big\{E\Big[\sum_{j=3}^nE(\xi_j^2|\mathcal{G}_{j-1})\Big]^{k/2}+\sum_{j=3}^n E|\xi_j|^k\Big\}\\
&\leq&C_2\Big\{E\Big[\sum_{j=3}^n\sum_{i=2}^{j-1}\bbx_i^\top\bbB^\top\bbB\bbx_i\Big]^{k/2}+\sum_{j=3}^n E\Big|\sum_{i=2}^{j-1}\bbx_i^\top\bbB^\top\bbB\bbx_j\Big|^k\Big\}\\
&\leq&C_3n^{k}E|\bbx_2^\top\bbB^\top\bbB\bbx_2|^k+C_4n^{k+1} E|\bbx_2^\top\bbB^\top\bbB\bbx_3|^k\\
&\leq&C_5n^{k}[E|\bbx_2^\top\bbB^\top\bbB\bbx_2-\text{tr}\bbI_p|^k+|\text{tr}\bbI_p|^k]+C_4n^{k+1} E|\bbx_2^\top\bbB^\top\bbB\bbx_3|^k\\
&\leq&C_6n^{k}[p^{k/2+1}+ p^{k/2}n^{k/2-1}+p^k]+C_7n^{k+1} (np)^{k/2-1}\\
&=&O(n^{3k/2}p^{k/2-1}).
\end{eqnarray*}
So the proof of (\ref{ww1}) is completed. On the other hand, for $k\geq 4$, it follows from (\ref{ww1}) that
\begin{eqnarray}
E|\bar\bbx_1^\top\bbB^\top\bbC\bbB\bar\bbx_1|^k&=&E\|\bar\bbx_1^\top\bbB^\top\bbC\bbB\bar\bbx_1\|^k\leq E(\|\bar\bbx_1^\top\bbB^\top\|\|\bbC\|\|\bbB\bar\bbx_1\|)^k\nonumber\\
&\leq&\|\bbC\|^kE|\bar\bbx_1^\top\bbB^\top\bbB\bar\bbx_1|^k=O(\frac{p^{k/2}}{n^{k/2}}).\label{ww2}
\end{eqnarray}
Thus, \eqref{a14} follows from (\ref{ww1}) and (\ref{ww2}). Similarly to (\ref{ww1}), for $k=2$, it can be seen that
\begin{eqnarray}
E|\bar\bbx_1^\top\bbB^\top\bbB\bar\bbx_1|^2&\leq & \frac{C_1}{n^{4}}\Big[E\Big|\sum_{i=2}^n\bbx_i^\top\bbB^\top\bbB\bbx_i\Big|^2+E\Big|\sum_{i\neq j,i>1,j>1}\bbx_{i}^\top\bbB^\top\bbB\bbx_{j}\Big|^2\Big]\nonumber\\
&=&O(\frac{p^{2}}{n^{2}}).\label{ww3}
\end{eqnarray}
In fact, by Lemma A.1, $C_r$ inequality and  \eqref{a5c},
\begin{eqnarray}
E\Big|\sum_{i=2}^n\bbx_i^\top\bbB^\top\bbB\bbx_i\Big|^2&\leq&C_1E\Big|\sum_{i=2}^n(\bbx_i^\top\bbB^\top\bbB\bbx_i-E(\bbx_i^\top\bbB^\top\bbB\bbx_i))\Big|^2\nonumber\\
&&+ C_2\Big|\sum_{i=2}^nE(\bbx_i^\top\bbB^\top\bbB\bbx_i)\Big|^2\nonumber\\
&\leq&C_3\sum_{i=2}^nE(\bbx_i^\top\bbB^\top\bbB\bbx_i)^2+C_4(np)^2\nonumber\\
&\leq&C_5\sum_{i=2}^n[E(\bbx_i^\top\bbB^\top\bbB\bbx_i-\text{tr}\bbI_p)^2+|\text{tr}\bbI_p|^2]+C_4(np)^2\nonumber\\
&\leq&C_6n(p+p^2)+C_4(np)^2=O((np)^{2}).\nonumber
\end{eqnarray}
For $j=3,\ldots,n$, let $\xi_j=\sum_{i=2}^{j-1}\bbx_i^\top\bbB^\top\bbB\bbx_j$. Then, we apply Lemma A.1 with $k=2$ and obtain that
\begin{eqnarray*}
E\Big|\sum_{2\leq i\neq j\leq n}\bbx_{i}^\top\bbB^\top\bbB\bbx_{j}\Big|^2&=&E\Big|2\sum_{j=3}^n\xi_j\Big|^2\leq C_1\sum_{j=3}^n E|\xi_j|^2\\
&\leq&C_2\sum_{j=3}^n E\Big|\sum_{i=2}^{j-1}\bbx_i^\top\bbB^\top\bbB\bbx_j\Big|^2\\
&=&C_2\sum_{j=3}^n \sum_{i=2}^{j-1}E(\bbx_i^\top\bbB^\top\bbB\bbx_j\bbx_j^\top\bbB^\top\bbB\bbx_i)\nonumber\\
&=&C_2\sum_{j=3}^n \sum_{i=2}^{j-1}E[E(\bbx_i^\top\bbB^\top\bbB\bbx_j\bbx_j^\top\bbB^\top\bbB\bbx_i|\mathcal{G}_{j-1})]\nonumber\\
&=&C_2\sum_{j=3}^n \sum_{i=2}^{j-1}E[\bbx_i^\top\bbB^\top E(\bbB\bbx_j\bbx_j^\top\bbB^\top|\mathcal{G}_{j-1})\bbB\bbx_i]\nonumber\\
&=&C_2\sum_{j=3}^n \sum_{i=2}^{j-1}E[\bbx_i^\top\bbB^\top \bbB\bbx_i]=C_2\sum_{j=3}^n \sum_{i=2}^{j-1}E\text{tr}(\bbB\bbx_i\bbx_i^\top\bbB^\top)\nonumber\\
&=&C_2\sum_{j=3}^n \sum_{i=2}^{j-1}\text{tr}(\bbI_p)=O(n^{2}p).
\end{eqnarray*}
Thus, (\ref{ww3}) holds. In addition, similarly to (\ref{ww2}),
\begin{eqnarray}
E|\bar\bbx_1^\top\bbB^\top\bbC\bbB\bar\bbx_1|^2&=&E\|\bar\bbx_1^\top\bbB^\top\bbC\bbB\bar\bbx_1\|^2\leq E(\|\bar\bbx_1^\top\bbB^\top\|\|\bbC\|\|\bbB\bar\bbx_1\|)^2\nonumber\\
&\leq& \|\bbC\|^2E|\bar\bbx_1^\top\bbB^\top\bbB\bar\bbx_1|^2=O(\frac{p^{2}}{n^{2}}).\label{ww4}
\end{eqnarray}
Then, by (\ref{ww3}) and (\ref{ww4}),  \eqref{wa1} is proved. Combining Lemma A.1 with \eqref{a10}, we establish that
\begin{eqnarray}
E|\bbx_1^\top\bbB^\top\bbC\bbB\bar\bbx_1|^k&=&E\|\bbx_1^\top\bbB^\top\bbC\bbB\bar\bbx_1\|^k\leq E(\|\bbx_1^\top\bbB^\top\|\|\bbC\|\|\bbB\bar\bbx_1\|)^k\nonumber\\
&\leq&\|\bbC\|^kE|\bbx_1^\top\bbB^\top\bbB\bar\bbx_1|^k
=\frac{\|\bbC\|^k}{n^k}E\Big|\sum_{i=2}^n\bbx_1^\top\bbB^\top\bbB\bbx_i\Big|^k\nonumber\\
&\leq&\frac{C_1}{n^k}\Big(\Big(\sum_{i=2}^nE|\bbx_1^\top\bbB^\top\bbB\bbx_i|^2\Big)^{k/2}+\sum_{i=2}^nE|\bbx_1^\top\bbB^\top\bbB\bbx_i|^{k}\Big)\nonumber\\
&\leq&\frac{C_2}{n^k}[(n(np)^{1/2})^{k/2}+n(np)^{k/2-1}]\nonumber\\
&=&O(\frac{p^{k/4}}{n^{k/4}}),~k\geq 4,\label{wa2}
\end{eqnarray}
which implies  \eqref{a8}. By (\ref{ww3}) and the proof of \eqref{qua1} in Lemma A.3 with $k=2$,
\begin{eqnarray}
&&E|\alpha_1(z)|^2\nonumber\\
&=&\frac{1}{n^2}E|\bbx_1^\top\bbB^\top\bbA_{n1}^{-1}(z)\bbB\bar\bbx_1\bar\bbx_1^\top\bbB^\top\bbA_{n1}^{-1}(z)\bbB\bbx_1
-\text{tr}(\bbB^\top\bbA_{n1}^{-1}(z)\bbB\bar\bbx_1\bar\bbx_1^\top\bbB^\top\bbA_{n1}^{-1}(z)\bbB|^2\nonumber\\
&\leq&\frac{C_1}{n^2}[EX_{11}^4\|\bbA_{n1}^{-1}(z)\|^2]E|\bar\bbx_1^\top\bbB^\top\bbB\bar\bbx_1|^2=O(\frac{p^2}{n^4 }),\nonumber
\end{eqnarray}
which implies  \eqref{wa3}.

Lastly, we consider  \eqref{a11}. As for \eqref{a11}, if $m=0$ and $r=0$, then  \eqref{a11} directly follows from  \eqref{a5c}, \eqref{a5} and H\"{o}lder's inequality. If $m\geq 1$ and $r=0$, then by induction on $m$, we have
\begin{eqnarray*}
&&E\Big|\prod_{i=1}^m \frac{1}{n}\bbx_1^\top\bbB^\top\bbC_i\bbB\bbx_1\prod_{j=1}^q\frac{1}{n}[\bbx_1^\top\bbB^\top\bbD_j\bbB\bbx_1-\text{tr}(\bbD_j)]\Big|\\
&\leq&E\Big|\prod_{i=1}^{m-1} \frac{1}{n}\bbx_1^\top\bbB^\top\bbC_i\bbB\bbx_1\frac{1}{n}(\bbx_1^\top\bbB^\top\bbC_m\bbB\bbx_1-\text{tr}\bbC_m)
\prod_{j=1}^q\frac{1}{n}(\bbx_1^\top\bbB^\top\bbD_j\bbB\bbx_1-\text{tr}\bbD_j)\Big|\\
&&+C_1\frac{p}{n}E\Big|\prod_{i=1}^{m-1} \frac{1}{n}\bbx_1^\top\bbB^\top\bbC_i\bbB\bbx_1\prod_{j=1}^q\frac{1}{n}(\bbx_1^\top\bbB^\top\bbD_j\bbB\bbx_1-\text{tr}\bbD_j)\Big|\\
&=&O(\sqrt{\frac{p}{n}}\frac{1}{n^{1/2}}).\nonumber
\end{eqnarray*}
Repeating the argument, we have
\begin{eqnarray}
E\Big|\prod_{i=1}^m \frac{1}{n}\bbx_1^\top\bbB^\top\bbC_i\bbB\bbx_1\prod_{j=1}^q\frac{1}{n}[\bbx_1^\top\bbB^\top\bbD_j\bbB\bbx_1-\text{tr}(\bbD_j)]\Big|^2=O(\frac{p}{n^2})\nonumber
\end{eqnarray}
($m=0$ by  \eqref{a5c}, \eqref{a5} and $m\geq 1$ by induction). Then, for $m\geq 1$ and $1\leq r\leq 2$, we have by  \eqref{a8} that
\begin{eqnarray*}
&&E\Big|\prod_{i=1}^m \frac{1}{n}\bbx_1^\top\bbB^\top\bbC_i\bbB\bbx_1\prod_{j=1}^q\frac{1}{n}[\bbx_1^\top\bbB^\top\bbD_j\bbB\bbx_1
-\text{tr}(\bbD_j)](\bbx_1^\top\bbB^\top\bbH\bbB\bar\bbx_1)^r\Big|\\
&\leq&\Big(E\Big|\prod_{i=1}^m \frac{1}{n}\bbx_1^\top\bbB^\top\bbC_i\bbB\bbx_1\prod_{j=1}^q\frac{1}{n}[\bbx_1^\top\bbB^\top\bbD_j\bbB\bbx_1
-\text{tr}(\bbD_j)]\Big|^2\Big)^{1/2}\Big(E(\bbx_1^\top\bbB^\top\bbH\bbB\bar\bbx_1)^{2r}\Big)^{1/2}\\
&\leq&\Big(E\Big|\prod_{i=1}^m \frac{1}{n}\bbx_1^\top\bbB^\top\bbC_i\bbB\bbx_1\prod_{j=1}^q\frac{1}{n}[\bbx_1^\top\bbB^\top\bbD_j\bbB\bbx_1
-\text{tr}(\bbD_j)]\Big|^2\Big)^{1/2}\Big(E(\bbx_1^\top\bbB^\top\bbH\bbB\bar\bbx_1)^{4}\Big)^{r/4}\\
&=&O(\frac{p^{1/2}}{n}).
\end{eqnarray*}
If $m=0$ and $1\leq r\leq 2$,  \eqref{a11} can be obtained similarly.

\subsection{\bf Proof of Lemma 1}

\begin{proof}[Proof of Lemma 1]
Since $\bbB\bbB^\top=\bbI_p$, we have
\begin{eqnarray}
E\bar\bbx^\top\bbB^\top\bbB\bar\bbx&=&\text{tr}E(\bar\bbx^\top\bbB^\top\bbB\bar\bbx)=\frac{\text{tr}(\bbB\bbB^\top)}{n}=\frac{p}{n}=c_n.\nonumber
\end{eqnarray}
Then
\begin{eqnarray}
&&\frac{n}{\sqrt p}(\bar\bbx^\top\bbB^\top\bbB\bar\bbx-c_n)
=\frac{n}{\sqrt p}(\bar\bbx^\top\bbB^\top\bbB\bar\bbx-E\bar\bbx^\top\bbB^\top\bbB\bar\bbx)\nonumber\\
&=&\frac{n}{\sqrt p}\sum_{j=1}^n(E_j-E_{j-1})(\bar\bbx^\top\bbB^\top\bbB\bar\bbx-\bar\bbx_j^\top\bbB^\top\bbB\bar\bbx_j)\nonumber\\
&=&\frac{n}{\sqrt p}\sum_{j=1}^n(E_j-E_{j-1})(2\frac{\bar\bbx_j^\top\bbB^\top\bbB\bbx_j}{n}+\frac{\bbx_j^\top\bbB^\top\bbB\bbx_j}{n^2}).\label{ba1}
\end{eqnarray}
By Lemma A.1 in Appendix A.1, \eqref{a5c} and $\bbB\bbB^\top=\bbI_p$, we have
\begin{eqnarray}
&&E|\frac{n}{\sqrt p}\sum_{j=1}^n(E_j-E_{j-1})(\frac{\bbx_j^\top\bbB^\top\bbB\bbx_j}{n^2})|^2\nonumber\\
&=&\frac{1}{n^2p}E|\sum_{j=1}^n(\bbx_j^\top\bbB^\top\bbB\bbx_j-E(\bbx_j^\top\bbB^\top\bbB\bbx_j))|^2\nonumber\\
&=&\frac{1}{n^2p}E|\sum_{j=1}^n(\bbx_j^\top\bbB^\top\bbB\bbx_j-\text{tr}(\bbB^\top\bbB))|^2=\frac{1}{n^2p}E|\sum_{j=1}^n(\bbx_j^\top\bbB^\top\bbB\bbx_j-\text{tr}(\bbI_p)|^2\nonumber\\
&\leq&\frac{C_1}{n^2p}\sum_{j=1}^nE|\bbx_j^\top\bbB^\top\bbB\bbx_j-\text{tr}(\bbI_p)|^2\leq \frac{C_2}{n^2p}np=O(\frac{1}{n}),\label{ba2}
\end{eqnarray}
where $C_1,C_2$ are some positive constants. Next, we verify the condition $(i)$ of Lemma A.2 in Appendix A.1. Obviously, it follows that
\begin{eqnarray}
E_{j-1}[E_j(\bar\bbx_j^\top\bbB^\top\bbB\bbx_j)]^2
&=&E_{j-1}[E_j(\bar\bbx_j^\top\bbB^\top\bbB\bbx_j\bbx_j^\top\bbB^\top\bbB\bar\bbx_j)]
=E_j(\bar\bbx_j^\top\bbB^\top)E_j(\bbB\bar\bbx_j)\nonumber\\
&=&\frac{1}{n^2}\sum_{k_1<j,k_2<j}\bbx_{k_1}^\top\bbB^\top\bbB\bbx_{k_2}.\nonumber
\end{eqnarray}

Note that for the above terms corresponding to the case of $k_1=k_2$, we have
\begin{eqnarray}
&&E|\frac{1}{n^2}\sum_{k_1<j}[\bbx_{k_1}^\top\bbB^\top\bbB\bbx_{k_1}-E\bbx_{k_1}^\top\bbB^\top\bbB\bbx_{k_1}]|^2\nonumber\\
&=&\frac{1}{n^4}\sum_{k_1<j}E|\bbx_{k_1}^\top\bbB^\top\bbB\bbx_{k_1}-E(\bbx_{k_1}^\top\bbB^\top\bbB\bbx_{k_1})|^2=O(\frac{p}{n^3}).\nonumber
\end{eqnarray}
On the other hand, when $k_1\neq k_2$, by \eqref{a10},
\begin{eqnarray}
E|\frac{1}{n^2}\sum_{k_1\neq k_2}\bbx_{k_1}^\top\bbB^\top\bbB\bbx_{k_2}|^2
&=&\frac{1}{n^4}\sum_{k_1\neq k_2,h_1\neq h_2}E[\bbx_{k_1}^\top\bbB^\top\bbB\bbx_{k_2}\bbx_{h_1}^\top\bbB^\top\bbB\bbx_{h_2}]\nonumber\\
&\leq &\frac{C_1}{n^4}\sum_{k_1\neq k_2}E(\bbx_{k_1}^\top\bbB^\top\bbB\bbx_{k_2})^2=O(\frac{p^{1/2}}{n^{3/2}}).\nonumber
\end{eqnarray}
Consequently,
\begin{eqnarray}
&&\frac{4}{p}\sum_{j=1}^nE_{j-1}[E_j(\bar\bbx_j^\top\bbB^\top\bbB\bbx_j)]^2\nonumber\\
&=&\frac{4}{p}\sum_{j=1}^n\sum_{k_1=1}^{j-1}\frac{E\bbx_{k_1}^\top\bbB^\top\bbB\bbx_{k_1}}{n^2}+\frac{n}{p}(O_P(\frac{p}{n^3})+O_P(\frac{p^{\frac{1}{2}}}{n^{\frac{3}{2}}}))\nonumber\\
&=&\frac{4}{p}\sum_{j=1}^n\frac{(j-1)\text{tr}(\bbB\bbB^\top)}{n^2}+o_P(1)=4\sum_{j=1}^n\frac{(j-1)}{n^2}+o_P(1)\xrightarrow{P} 2,\label{k1}
\end{eqnarray}
since $\bbB\bbB^\top=\bbI_p$. Applying Lemma A.2 in Appendix A.1, one can establish that
\begin{equation}
\frac{n}{\sqrt p}(\bar\bbx^\top\bbB^\top\bbB\bar\bbx-c_n)\xrightarrow{d}N(0,2),\label{wq1}
\end{equation}
which implies
\begin{equation}
\|\bbB\bar\bbx\|^2-c_n=O_P(\frac{\sqrt{p}}{n}).\label{wq4}
\end{equation}
Denote
$$X_n(z)=\frac{n}{\sqrt p}\Big[c_n\frac{\bar\bbx^\top\bbB^\top (\tilde{\mathbb{S}}_n-z\bbI)^{-1}\bbB\bar\bbx}{\|\bbB\bar\bbx\|^2}-c_nm(z)\Big],$$
$$Y_n=\frac{n}{\sqrt p}\Big(g(\bar\bbx^\top\bbB^\top\bbB\bar\bbx)-g(c_n)\Big),$$
where $m(x)=\int \frac{1}{x-z}dH(x)$ and $H(x)=I(1\leq x)$ for $x\in \mathbb{R}$. By \eqref{mn4} and $c_n=o(1)$, $E\bar\bbx^\top\bbB^\top\bbA_n^{-1}(z)\bbB\bar\bbx$ can be estimated by $c_nm(z)$, which implies $1-\bar\bbx^\top\bbB^\top\bbA_n^{-1}(z)\bbB\bar\bbx=1+o_P(1)$.
Then, by \eqref{mn0}, \eqref{wq5}, \eqref{mn5}, \eqref{mn6}, \eqref{ba1}, \eqref{ba2}, \eqref{wq4}, $c_n\rightarrow 0$ and delta method, we obtain that for any constants $a_1$ and $a_2$,
\begin{eqnarray}
&&a_1X_n(z)+a_2Y_n\nonumber\\
&=&a_1\frac{n}{\sqrt {p}}\Big(\frac{\bar\bbx^\top\bbB^\top\bbA_n^{-1}(z)\bbB\bar\bbx}{1-\bar\bbx^\top\bbB^\top\bbA_n^{-1}(z)\bbB\bar\bbx}-c_nm(z)\Big)+a_2
\frac{n}{\sqrt p}\Big(g(\bar\bbx^\top\bbB^\top\bbB\bar\bbx)-g(c_n)\Big)+o_P(1)\nonumber\\
&=&a_1\frac{n}{\sqrt p}[\bar\bbx^\top\bbB^\top\bbA^{-1}_n(z)\bbB\bar\bbx-c_n m(z)]+a_2g^{\prime}(0)\frac{n}{\sqrt p}(\bar\bbx^\top\bbB^\top\bbB\bar\bbx-c_n)+o_P(1)\nonumber\\
&:=&\sum_{j=1}^nl_j(z)+o_P(1),\nonumber
\end{eqnarray}
where
$$l_j(z)=2a_1\frac{1}{\sqrt{p}}E_j(\bbx_j^\top\bbB^\top\bbA_{nj}^{-1}(z)\bbB\bar\bbx_j)+\frac{2a_2g^\prime(0)}{\sqrt p}E_j(\bar\bbx_j^\top\bbB^\top\bbB\bbx_j).$$

Next, we apply Lemma A.2 in Appendix A.1 to complete the proof of Lemma 1. Obviously, for all $\varepsilon>0$, we obtain by \eqref{a8} that
\begin{eqnarray}
&&\sum_{j=1}^nE[|l_j(z)|^{2}I(|l_j(z)|>\varepsilon)]\nonumber\\
&\leq& 2\sum_{j=1}^n\frac{4a_1^2}{p}E[|\bbx_j^\top\bbB^\top\bbA_{nj}^{-1}(z)\bbB\bar\bbx_j|^{2} I(|\frac{2a_1}{\sqrt{p}}\bbx_j^\top\bbB^\top\bbA_{nj}^{-1}(z)\bbB\bar\bbx_j|>\frac{\varepsilon}{2})]\nonumber\\
&&+2\sum_{j=1}^n\frac{4a_2^2(g^\prime(0))^2}{p}E|\bar\bbx_j^\top\bbB^\top\bbB\bbx_j|^{2}I(|\frac{2a_2g^\prime(0)}{\sqrt p}\bar\bbx_j^\top\bbB^\top\bbB\bbx_j|>\frac{\varepsilon}{2})\nonumber\\
&\leq&\frac{C_1}{p^2}\sum_{j=1}^n(E|\bbx_j^\top\bbB^\top\bbA_{nj}^{-1}(z)\bbB\bar\bbx_j|^{4}+E|\bbx_j^\top\bbB^\top\bbB\bar\bbx_j|^{4})=O(\frac{1}{p}).\nonumber
\end{eqnarray}
So the condition $(ii)$ of Lemma A.2 is satisfied. Next, we aim to verify its condition $(i)$.
Suppose that we have
\begin{eqnarray}
\frac{4}{p}\sum_{j=1}^nE_{j-1}[E_{j}(\bbx_j^\top\bbB^\top\bbA_{nj}^{-1}(z)\bbB\bar\bbx_j)E_j(\bar\bbx_j^\top\bbB^\top\bbB\bbx_j)]
\xrightarrow{P}&\frac{2}{1-z}.\label{k2}
\end{eqnarray}
Combining (\ref{mn3}) with (\ref{k1}) and (\ref{k2}), one can easily obtain that
\begin{eqnarray}
\sum_{j=1}^nE_{j-1}[l_j(z_1)l_j(z_2)]
&=&2a_1^2\frac{1}{(1-z_1)(1-z_2)}+2a_2^2(g^\prime(0))^2\nonumber\\
&&+2a_1a_2g^\prime(0)\frac{1}{1-z_1}+2a_1a_2g^\prime(0)\frac{1}{1-z_2}+o_P(1).\nonumber
\end{eqnarray}
Therefore, the condition $(i)$ of Lemma A.2 is completely verified.

Lastly, we have to prove (\ref{k2}). Write
\begin{eqnarray}
&&E_{j-1}[E_{j}(\bbx_j^\top\bbB^\top\bbA_{nj}^{-1}(z)\bbB\bar\bbx_j)E_j(\bar\bbx_j^\top\bbB^\top\bbB\bbx_j)]\nonumber\\
&=&E_{j-1}[E_j(\bar\bbx_j^\top\bbB^\top\bbB\bbx_j\bbx_j^\top\bbB^\top\bbA_{nj}^{-1}(z)\bbB\bar\bbx_j)]\nonumber\\
&=&E_{j-1}[E_j(\bar\bbx_j^\top\bbB^\top\bbA_{nj}^{-1}(z)\bbB\bar\bbx_j)]=\frac{1}{n}\sum_{i<j}E_j(\bbx_i^\top\bbB^\top\bbA_{nij}^{-1}(z)\bbB\bar\bbx_j\beta_{ij}(z))\nonumber\\
&=&\frac{1}{n^2}\sum_{i<j}E_j(\bbx_i^\top\bbB^\top\bbA_{nij}^{-1}(z)\bbB\bbx_i\beta_{ij}(z))+\frac{1}{n}\sum_{i<j}E_j(\bbx_i^\top\bbB^\top\bbA_{nij}^{-1}(z)\bbB\bar\bbx_{ij}\beta_{ij}(z)).\nonumber\\
\label{mn7}
\end{eqnarray}
It follows from \eqref{aa1} and \eqref{aa3} that $\beta_{ij}(z)=1+O(\frac{p}{n})+O_P(\frac{\sqrt p}{n})+O_P(n^{-1/2})$. Then by the proof of \eqref{mm1} in Appendix A.2, it follows that
\begin{eqnarray}
\frac{1}{n}\sum_{i<j}E_j(\bbx_i^\top\bbB^\top\bbA_{nij}^{-1}(z)\bbB\bar\bbx_{ij}\beta_{ij}(z))=
O_P(\frac{\sqrt{p}}{n}).\label{mn8}
\end{eqnarray}
In addition, by the proof of \eqref{ab2} in Appendix A.2,
\begin{eqnarray}
&&\frac{1}{n^2}\sum_{i<j}E_j(\bbx_i^\top\bbB^\top\bbA_{nij}^{-1}(z)\bbB\bbx_i\beta_{ij}(z))\nonumber\\
&=&\frac{j-1}{n^2}E_j\text{tr}[\bbA_{nj}^{-1}(z)]
+O_p(\frac{\sqrt{p}}{n})+O_P(\frac{p^{3/2}}{n^{3/2}}),\nonumber
\end{eqnarray}
and by the proofs of \eqref{aa5}, \eqref{aa7}, \eqref{aa8} and \eqref{aa9},
$$
\frac{j-1}{n^2}E_j\text{tr}[\bbA_{nj}^{-1}(z)]=\frac{j-1}{n^2}p\frac{E\text{tr}[\bbA_{nj}^{-1}(z)]}{p}+O_P(\frac{\sqrt{p}}{n}),$$
which yields
\begin{equation}
\frac{1}{n^2}\sum_{i<j}E_j(\bbx_i^\top\bbB^\top\bbA_{ij}^{-1}(z)\bbB\bbx_i\beta_{ij}(z))=\frac{j-1}{n^2}E\text{tr}[\bbA_{nj}^{-1}(z)]+O_P(\frac{p^{\frac{1}{2}}}{n})+O_P(\frac{p^{\frac{3}{2}}}{n^{\frac{3}{2}}}).\label{mn9}
\end{equation}
Moreover, by \eqref{aa5}, it follows that
$$
\bbA_{nj}^{-1}(z)=-(z\bbI_p-\frac{n-1}{n}b_1(z)\bbI_p)^{-1}+b_1(z)\bbC(z)+\bbD(z)+\bbE(z),$$
where $\bbC(z)$, $\bbD(z)$ and $\bbE(z)$ are defined by \eqref{aa5}.
Consequently, by \eqref{aa5}, \eqref{aa7}, \eqref{aa8}, \eqref{aa9}, (\ref{mn7}), (\ref{mn8}) and (\ref{mn9}), we establish that
\begin{eqnarray}
&&\text{LHS of} ~(\ref{k2})\nonumber\\
&=&\frac{4}{p}\sum_{j=1}^n\frac{j-1}{n^2}p\frac{E\text{tr}[\bbA_{nj}^{-1}(z)]}{p}+O_P(\frac{1}{\sqrt{p}})+O_P(\frac{p^{\frac{1}{2}}}{n^{\frac{1}{2}}})\nonumber\\
&=&-4\sum_{j=1}^n\frac{j-1}{n^2}\frac{\text{tr}[(z\bbI_p-\frac{n-1}{n}b_1(z)\bbI_p)^{-1}]}{p}+O_P(\frac{1}{\sqrt{p}})+O_P(\frac{p^{\frac{1}{2}}}{n^{\frac{1}{2}}})\nonumber\\
&=&4\sum_{j=1}^n\frac{j-1}{n^2}\frac{1}{\frac{n-1}{n}b_1(z)-z}+O_P(\frac{1}{\sqrt{p}})+O_P(\frac{p^{\frac{1}{2}}}{n^{\frac{1}{2}}})\nonumber\\
&=&\frac{2}{1-z}+o_P(1),\nonumber
\end{eqnarray}
So the proofs of Lemma 1 is completed.\end{proof}




\end{document}